\documentclass[10pt]{elsarticle}

\textwidth 17cm
\textheight 24cm
\hoffset -20mm
\voffset -20mm
\usepackage{hyperref}
\usepackage{amsmath,amssymb,amsthm}
\usepackage{graphicx}
\usepackage{graphics}
\usepackage {graphicx,fancyhdr}
\usepackage{color}
\usepackage{flafter}
\numberwithin{equation}{section}
\usepackage{multirow}
\usepackage{mathrsfs} 
\usepackage{subfigure}
\usepackage{tabularx}
\usepackage{lineno}
\usepackage{xfrac}

\usepackage{stmaryrd} 
\usepackage{bm} 
\usepackage{cases} 

\newtheorem{myTheorem}{Theorem}[section]
\newtheorem{myLemma}[myTheorem]{Lemma}
\newtheorem{myRemark}[myTheorem]{Remark}

\newtheorem{myExam}{Example}{\normalfont}
\newtheorem*{myProof}{Proof} 

\parskip   1ex
\parsep    .5ex

\begin{document}
	
 \begin{center}
\large\bf
A mixed discontinuous Galerkin method with symmetric stress
for\\ Brinkman problem based on the velocity-pseudostress formulation
\end{center}
		
\begin{center}
Yanxia Qian\footnote{School of Mathematics and Statistics, Xi'an
Jiaotong University, Xi'an, Shaanxi 710049, P. R. China. Email: {\tt
yxqian0520@163.com}},
\quad
Shuonan Wu\footnote{School of Mathematical Sciences, Peking
University, Beijing 100871, P. R. China. The work of this author is
partially supported by the startup grant from Peking University.
Email: {\tt snwu@math.pku.edu.cn}},
\quad
Fei Wang\footnote{School of Mathematics and
Statistics \& State Key Laboratory of Multiphase Flow in Power
Engineering, Xi'an Jiaotong University, Xi'an, Shaanxi 710049, P.  R.
China. The work of this author was partially supported by the National
Natural Science Foundation of China (Grant No.\ 11771350).  Email:
{\tt feiwang.xjtu@xjtu.edu.cn} (Corresponding author) 
}
\end{center}

	
\begin{quote}
{\bf Abstract.} The Brinkman equations can be regarded as a
combination of the Stokes and Darcy equations which model transitions
between the fast flow in channels (governed by Stokes equations) and
the slow flow in porous media (governed by Darcy's law). The numerical
challenge for this model is the designing of a numerical scheme which
is stable for both the Stokes-dominated (high permeability) and the
Darcy-dominated (low permeability) equations.  In this paper, we solve
the Brinkman model in $n$ dimensions ($n = 2, 3$) by using  the mixed
discontinuous Galerkin (MDG) method, which meets this challenge.  This
MDG method is based on the pseudostress-velocity formulation and uses
a discontinuous piecewise polynomial pair
$\underline{\bm{\mathcal{P}}}_{k+1}^{\mathbb{S}}$-$\bm{\mathcal{P}}_k$
$(k \geq 0)$, where the stress field is symmetric. The main unknowns
are the pseudostress and the velocity, whereas the pressure is easily
recovered through a simple postprocessing.  A key step in the analysis
is to establish the parameter-robust inf-sup stability through
specific parameter-dependent norms at both continuous and discrete
levels. Therefore, the stability results presented here are uniform
with respect to the permeability.  Thanks to the parameter-robust
stability analysis, we obtain optimal error estimates for the stress
in broken $\underline{\bm{H}}(\bm{{\rm div}})$-norm and velocity in
$\bm{L}^2$-norm. Furthermore, the optimal $\underline{\bm{L}}^2$ error
estimate for pseudostress is derived under certain conditions.
Finally, numerical experiments are provided to support the theoretical
results and to show the robustness, accuracy, and flexibility of the
MDG method.

{\bf Keywords.} Brinkman model, mixed discontinuous Galerkin method,
pseudostress, parameter-robust stability

\end{quote}

	

\section{Introduction}\label{Introduction}

The Brinkman equations (cf. \cite{Brinkman1949A}),
\begin{subequations}\label{Brinkman0}
	\begin{align}
	\label{Brinkman_eq0}
  \nu\bm{\underline{\bm{\kappa}}}^{-1}\bm{u} - 2\nu
  \mathbf{div}(\underline{\bm{\varepsilon}}(\bm{u})) + \nabla p &=
  \bm{f}    &\text{in } \Omega,\\
	\label{divergencefree}
	{\rm div}\bm{u}&= 0    &\text{in } \Omega,
	\end{align}
\end{subequations}
which model the flow of a fluid through a complex porous medium
occupying domain $\Omega$ with a high-contrast permeability tensor
$\underline{\bm{\kappa}}$, can be seen as a mixture of Darcy and
Stokes equations. Here, $\nu >0$ is the fluid viscosity, $\bm{u}$
denotes the velocity field,
$\underline{\bm{\varepsilon}}(\bm{u})=(\underline{\bm{\nabla}}\bm{u} +
(\underline{\bm{\nabla}}\bm{u})^t)/2$
is the strain rate, $p$ is the pressure field, and $\bm{f}$ is the
volume force. This model arises from applications in many fields, such
as groundwater hydrology, biomedical engineering, petroleum industry,
and environmental science (cf. \cite{wehrspohn2005ordered,
Ligaarden2010Stokes, vafai2010porous}). From \eqref{Brinkman0}, we can
see that the Brinkman problem becomes Stokes-dominated when the
permeability tensor $\underline{\bm{\kappa}}$ is getting large, and it
becomes Darcy-dominated when $\underline{\bm{\kappa}}$ is quite small.

It is well known that the usual Darcy stable element pairs may diverge
for Stokes flow and vice versa. Therefore, the numerical challenge for
solving the Brinkman model is to construct a stable discretization
method for both the Stokes and the Darcy equations. As shown in
\cite{Mardal2002A}, the Darcy stable finite element pairs, for
example, the Raviart-Thomas element (cf. \cite{Raviart1977mixed})
leads to non-convergent results as the Brinkman model becomes
Stokes-dominated; on the other hand, the usual Stokes stable finite
element pairs, such as Mini-element (cf. \cite{Arnold1984A}),
$\bm{\mathcal{P}_2}$-$\mathcal{P}_0$ element, nonconforming
Crouzeix-Raviart (CR) finite element (cf.
\cite{Crouzeix1973Conforming}), will diverge when the Brinkman
equations turn into Darcy-dominated.  Therefore, many researchers pay
attention to developing stable and accurate numerical methods for
solving Brinkman equations. One way to circumvent this difficulty is
to modify the existing Stokes or Darcy elements to make them work well
for the Brinkman model.  In \cite{Burman2005Stabilized}, inspired by
discontinuous Galerkin (DG) method, a stabilized CR finite element
method is constructed by adding a penalty term. In addition, a
generalization of classical Mini-element is studied in
\cite{Juntunen2010Analysis}; stabilized equal-order finite elements
are proposed and analyzed in \cite{Braack2011Equal};  (hybridized)
interior penalty DG scheme with $\bm{H}(\rm div)$-conforming finite
elements is investigated in \cite{Juho2011H}. Another approach is to
develop new numerical schemes for solving Brinkman equations, for
examples, pseudostress-based mixed finite element methods (cf.
\cite{Barrios2012On,Gatica2014Analysis}), weak Galerkin methods (cf.
\cite{Lin2014A, Zhai2016A}), virtual element method (cf.
\cite{C2017A}), hybrid high-order method (cf. \cite{Botti2018A}) and
hybridizable discontinuous Galerkin method (cf.
\cite{Gatica2018Apriori}).

To study hydrodynamics, different formulations, like
velocity-pressure, stress-velocity-pressure, pseudostress-velocity
formulations, have been introduced and analyzed. The velocity-pressure
formulation has been extensively studied in the computation of
incompressible Newtonian flows (cf.
\cite{Brezzi1991Mixed,Boffi2013Mixed}). However, the study of
numerical methods for the stress-based and pseudostress-based
formulations
(cf. \cite{Farhloul1993A,Marek1993Stabilized,Cai2010Mixed,Gatica2010Analysis})
has become a very active research area because of the arising interest
in non-Newtonian flows. The main advantage of the stress-based and
pseudostress-based formulation is that it provides a unified framework
for both the Newtonian and the non-Newtonian flows.  In addition,
physical quantity like the stress can be computed directly instead of
by taking derivatives of the velocity, which avoids degrading of
accuracy in the process of numerical differentiation. Precise
computation of the stress is of paramount importance for the hydraulic
fracturing problem as the crack propagation is determined by the
stress field. While a formulation comprising the stress as a
fundamental unknown is unavoidable for non-Newtonian flows in which
the constitutive law is nonlinear, the drawback of the
stress-velocity-pressure formulation is the increase in the number of
unknowns. To avoid this disadvantage, we focus on the
pseudostress-velocity formulation. Last but not least, we need to
mention that the pressure field can be easily obtained by a simple
postprocessing without affecting the accuracy of the approximation.

Due to the flexibility in constructing the local shape function spaces
and the ability to capture non-smooth or oscillatory solutions
effectively, DG methods have been applied to solve many problems in
scientific computing and engineering, such as conservation laws
(cf. \cite{Bey1995hp,Chen2017Entropy}), Darcy flow
(cf. \cite{Brezzi2005Mixed,Aizinger2018Analysis}), Navier-Stokes (or
Stokes) equations (cf. \cite{Bassi1997A,Kaya2005A}), variational
inequalities (cf.
\cite{Wang2010Discontinuous,Brenner2012A,Wang2014Discontinuous}) and
much more. Besides, DG methods also enjoy the following advantages:
(i) locally (and globally) conservative; (ii) easy to implement $hp$
adaptivity; (iii) suitable for parallel computing. We refer to
\cite{Cockburn2000Discontinuous,Brezzi2000Discontinuous,Douglas2002Unified,Hong2019A}
for more discussion about DG methods.

In this paper, we construct a mixed discontinuous Galerkin (MDG)
method with
$\underline{\bm{\mathcal{P}}}_{k+1}^{\mathbb{S}}$-$\bm{\mathcal{P}}_k$
element pair for solving the Brinkman equations based on the
pseudostress-velocity formulation. The main results of this article
include that: (i) The MDG scheme with symmetric stress field is
uniformly stable and efficient for both Darcy-dominated and
Stokes-dominated flows; (ii) Under specific parameter-dependent norms,
the parameter-robust stability results of both continuous and discrete
schemes are obtained; (iii) For $k \geq 0$, we get the optimal
convergence order for the stress in broken
$\underline{\bm{H}}(\bm{{\rm div}})$-norm and velocity in
$\bm{L}^2$-norm; (iv) When $k\geq n$ and the Stokes pair
$\bm{\mathcal{P}}_{k+2}$-$\mathcal{P}_{k+1}$ is stable, we obtain the
optimal $\underline{\bm{L}}^2$ error estimate for the pseudostress.

The rest of the paper is organized as follows. In Section
\ref{Preliminaries}, we introduce the pseudostress-velocity
formulation for the Brinkman model and present some preliminary
results. In Section \ref{MDGdiscretizations}, the MDG scheme is
introduced and the well-posedness is obtained.  We show the stability
of the discrete scheme and prove optimal error estimates for both
velocity and pressure in Section \ref{errorestimation}. In Section
\ref{numericalexamples}, numerical examples are provided to confirm
the theoretical findings and to illustrate the performance of the
mixed DG scheme. Finally, we give a short summary in Section \ref{summary}.


\section{Brinkman model in pseudostress-velocity
formulation}\label{Preliminaries}

In this section, we introduce the Brinkman model in the
pseudostress-velocity formulation and provide the parameter-robust
stability analysis of the continuous problem. First, we give the
notation.

\subsection{Notation}\label{Notation}

Given $n$ ($n = 2$ or $3$), we denote the space of real matrices of
order $n\times n$ by $\mathbb{M}$, and define $\mathbb{S} \subset
\mathbb{M}$ as the space of real symmetric matrices.  For matrices
$\underline{\bm{\tau}}=(\tau_{ij}) \in \mathbb{M}$ and
$\underline{\bm{\zeta}}=(\zeta_{ij}) \in \mathbb{M}$, we write as
usual
\begin{equation}\label{sec2_eq}
\underline{\bm{\tau}}^t = (\tau_{ji}), \quad
{\rm tr}(\underline{\bm{\tau}}) = \sum_{i=1}^n\tau_{ii}, \quad
\underline{\bm{\tau}}^d =  \underline{\bm{\tau}}- \frac{1}{n}{\rm
tr}(\underline{\bm{\tau}})\underline{\bm{I}}, \quad
\underline{\bm{\tau}}:\underline{\bm{\zeta}} =
\sum_{i,j=1}^n\tau_{ij}\zeta_{ij},
\end{equation}
where $\underline{\bm{I}}$ is the identity matrix.

For a subdomain $D \subset \mathbb{R}^{n}$ and integer $m\geq0$, we
denote the scalar-valued Sobolev spaces by $H^m(D)=W^{m,2}(D)$ with
the norm $\|\cdot\|_{m,D}$ and seminorm $|\cdot|_{m,D}$. When $m = 0$,
$H^0(D)$ coincides with the Lebesgue spaces $L^2(D)$, which is
equipped with the usual $L^2$-inner product $(\cdot,\cdot)_D$ and
$L^2$-norm $||\cdot||_{0,D}$. The $L^2$-inner product (or duality
pairing) on $\partial D$ is denoted by
$\langle\cdot,\cdot\rangle_{\partial D}$.  We denote the vector-valued
spaces, tensor-valued function spaces and symmetric-tensor-valued
spaces whose entries are in $H^m(D)$ by $\bm{H}^m(D)$,
$\underline{\bm{H}}^m(D)$ and $\underline{\bm{H}}^{m}(D;\mathbb{S})$,
respectively. In particular, $\bm{H}^0(D) = \bm{L}^2(D)$,
$\underline{\bm{H}}^0(D) = \underline{\bm{L}}^2(D)$ and
$\underline{\bm{H}}^0(D;\mathbb{S}) =
\underline{\bm{L}}^2(D;\mathbb{S})$.  Then, we introduce the following
space
\[
\underline{\bm{H}}( \bm{{\rm
div}},D;\mathbb{M}) = \{\underline{\bm{\tau}} \in
\underline{\bm{L}}^{2}(D;\mathbb{M}):\mathbf{div}\underline{\bm{\tau}} \in
\bm{L}^2(D)\},
\]
equipped with the norm $\|\underline{\bm{\tau}}\|_{\bm{{\rm
div}},D} = (\|\underline{\bm{\tau}}\|^2_{0,D} +
\|\mathbf{div}\underline{\bm{\tau}}\|^2_{0,D})^{1/2}$
for all $\underline{\bm{\tau}}\in\,\underline{\bm{H}}( \bm{{\rm
div}};\mathbb{M})$.  Here, differential operators are applied row by
row, i.e., the $i$-th row of $\bm{{\rm div}}\underline{\bm{\sigma}}$
is the divergence of the $i$-th row vector of the matrix
$\underline{\bm{\sigma}}$. Similarly, the $i$-th row of the matrix
$\underline{\bm{\nabla}}\bm{u}$ in the definition of
$\underline{\bm{\varepsilon}}(\bm{u})$ is the gradient (written as a
row) of the $i$-th component of the vector $\bm{u}$. We also define 
$
\underline{\bm{H}}( \bm{{\rm
div}},D;\mathbb{S}) = \{\underline{\bm{\tau}} \in
\underline{\bm{L}}^{2}(D;\mathbb{S}):\mathbf{div}\underline{\bm{\tau}} \in
\bm{L}^2(D)\}
$.

In the present context, Green's formula takes the form
\begin{equation} \label{Green-formula}
(\underline{\bm{\varepsilon}}(\bm{v}),\underline{\bm{\tau}})_D
= -(\mathbf{div}\underline{\bm{\tau}},\bm{v})_D +
\langle\underline{\bm{\tau}}\bm{n}_D,\bm{v}\rangle_{\partial D},
\end{equation}
where $\bm{n}_D$ is the exterior unit normal to $\partial D$. If $D$
is chosen as $\Omega$, we abbreviate it by using $(\cdot,\cdot)$ and
$\langle\cdot,\cdot\rangle$, and similar rule follows for the spaces
and norms mentioned above.

\subsection{Brinkman model}\label{Brinkman_model}

Let $\Omega\subset\mathbb{R}^{n}$ be a bounded and simply connected
polygonal domain with Lipschitz boundary $\Gamma$. In this paper, we
consider the permeability of the form $\underline{\bm{\kappa}} =
\kappa \underline{\bm{I}}$, with the purpose of facilitating
parameter-robust stability analysis. We could also take $\nu = 1$ by a
non-dimensionalization procedure (see Remark
\ref{rmk:non-dimensionalization} below). With these simplifications, we find
that for the unique solution ($\bm{u}$, $p$) of the Brinkman model \eqref{Brinkman0}, ($\underline{\bm{\sigma}}$, $\bm{u}$, $p$) solves the equations
\begin{subequations}\label{Brinkman}
	\begin{align}
	\label{Brinkman_eq1}
	&\underline{\bm{\sigma}}=2\underline{\bm{\varepsilon}}(\bm{u})-p\underline{\bm{I}} \qquad  &\text{in } \Omega,\\
	\label{Brinkman_eq2}
  &\kappa^{-1}\bm{u} - \mathbf{div}\underline{\bm{\sigma}} = \bm{f}
  &\text{in } \Omega,\\
	\label{Brinkman-eq3}
	&{\rm div}\bm{u} = 0    &\text{in } \Omega,\\	
	\label{Brinkman_eq4}
	&\bm{u}= \bm{g}   &\text{on } \Gamma,\\
	\label{Brinkman_eq5}
	&\int_{\Omega}pd\bm{x}= 0.
	\end{align}
\end{subequations}
Additionally, due to the incompressibility condition, we assume that
$\bm{g}$ satisfies the compatibility condition
$\int_{\Gamma}\bm{g}\cdot\bm{n}ds =0$, where $\bm{n}$ stands for the
unit outward normal on $\Gamma$.

\begin{myRemark} \label{rmk:non-dimensionalization}
If $\nu \neq 1$, by taking
$\underline{\widetilde{\bm{\sigma}}} = \underline{\bm{\sigma}}/\nu$,
$\widetilde{\bm{f}} = \bm{u}/{\nu}$ and $\widetilde{p} = p/{\nu}$,
we could eliminate the parameter $\nu$, i.e.
\begin{align*}
&\underline{\widetilde{\bm{\sigma}}}=2\underline{\bm{\varepsilon}}(\bm{u})-\widetilde{p}\underline{\bm{I}}
\qquad  &\text{in } \Omega,\\
  &\kappa^{-1}\bm{u} - \mathbf{div}\underline{\widetilde{\bm{\sigma}}}
  = \widetilde{\bm{f}}    &\text{in } \Omega,\\
	&{\rm div}\bm{u} = 0    &\text{in } \Omega,\\	
	&\bm{u}= \bm{g}   &\text{on } \Gamma,\\
	&\int_{\Omega}\widetilde{p}d\bm{x}= 0.
\end{align*}
As a result, we can get the same conclusions with the problem
\eqref{Brinkman}.
\end{myRemark}


As described in Section \ref{Introduction}, in order to keep the
strengths and improve the weaknesses of the stress-velocity-pressure
formulation, by the incompressible condition, the problem
\eqref{Brinkman} can be rewritten equivalently as the
pseudostress-velocity formulation  (cf. \cite{Gatica2018Apriori}):
\begin{subequations}\label{Brinkman1}
	\begin{align}
	\label{Brinkman1_eq1}
  &\underline{\bm{\sigma}}^d=2\underline{\bm{\varepsilon}}(\bm{u}) &
  \text{in } \Omega,\\
  \label{Brinkman1_eq2} &\kappa^{-1}\bm{u} -
  \mathbf{div}\underline{\bm{\sigma}} = \bm{f}  &\text{in } \Omega,\\
	\label{Brinkman1_eq3} 	
	&\bm{u}=\bm{g} &\text{on } \Gamma,\\
	\label{Brinkman1_eq4}
	&\int_{\Omega}{\rm tr}(\underline{\bm{\sigma}})d\bm{x}= 0, &
	\end{align}
\end{subequations}
where the pressure $p$ can be obtained by the postprocessing formula
\begin{equation}\label{P}
p = -\frac{1}{n}{\rm tr}(\underline{\bm{\sigma}})  \qquad\qquad\qquad
\text{in } \Omega.
\end{equation}
There are two reasons for eliminating the pressure. An obvious one is
to reduce one variable and, hence, many degrees of freedom in the
discrete system. A more important reason is that we can use economic
and accurate stable elements and develop fast solvers for the
resulting discrete system so that computational cost will be greatly
reduced.

Set $\underline{\bm{\Sigma}} = \{\underline{\bm{\tau}} \in
\underline{\bm{H}}( \bm{{\rm div}};\mathbb{S}): \int_{\Omega}{\rm
tr}(\underline{\bm{\tau}})d\bm{x}= 0 \}$ and $\bm{V} =
\bm{L}^2(\Omega)$. Then, the variational formulation of
\eqref{Brinkman1} reads as follows: given $\bm{f} \in
\bm{L}^2(\Omega)$ and $\bm{g} \in \bm{H}^{1/2}(\Gamma)$, find
$(\underline{\bm{\sigma}}, \bm{u}) \in \underline{\bm{\Sigma}}\times
\bm{V}$ such that
\begin{subequations}\label{weak}
	\begin{align}
	\label{weak_eq1}
  a(\underline{\bm{\sigma}},\underline{\bm{\tau}}) +
  b(\underline{\bm{\tau}},\bm{u})
  &=\langle\underline{\bm{\tau}}\bm{n},\bm{g}\rangle_{\Gamma} &\forall
  \underline{\bm{\tau}} \in \underline{\bm{\Sigma}},\\
	\label{weak_eq2}
  b(\underline{\bm{\sigma}},\bm{v}) - s(\bm{u},\bm{v}) &=
  -(\bm{f},\bm{v})  &\forall \bm{v} \in \bm{V}.
	\end{align}
\end{subequations}
Here, the bilinear forms are defined by
\[ 
a(\underline{\bm{\sigma}},\underline{\bm{\tau}})=
\frac{1}{2}(\underline{\bm{\sigma}}^d,\underline{\bm{\tau}}^d),\quad
  b(\underline{\bm{\tau}},\bm{v}) =
  (\mathbf{div}\underline{\bm{\tau}},\bm{v})\quad {\rm and} \quad
  s(\bm{u},\bm{v})= (\kappa^{-1}\bm{u},\bm{v}). 
\]	

Notice that, by Green's formula, the equation \eqref{weak_eq1}
contains both the equation
$2\underline{\bm{\varepsilon}}(\bm{u})=\underline{\bm{\sigma}}^d$ in
$\Omega$ and the boundary condition $\bm{u}=\bm{g}$, with in
particular the incompressibility condition ${\rm div}\,\bm{u}=
\frac{1}{2}{\rm tr}(\underline{\bm{\sigma}}^d)=0$.

Furthermore, by the definition of $\underline{\bm{\tau}}^d$, it is easy to check that
\begin{align}
\label{sec2_eq2_1}
\|\underline{\bm{\tau}}\|_0^2 = \|\underline{\bm{\tau}}^d\|_0^2+\frac{1}{n}\|{\rm tr}(\underline{\bm{\tau}})\|_0^2, \\
\label{sec2_eq2_2}
\|{\rm tr}(\underline{\bm{\tau}})\|_0 \leq\sqrt{n}\|\bm{\tau}\|_0.
\end{align}

Throughout the paper, we use the abbreviation $ x \lesssim y $ ($
x\gtrsim y $) for the inequality $ x \leq Cy$ ($ x \geq Cy $), where
the letter $C$ denotes a positive constant independent of the
parameters $\kappa$, $\nu$, and the mesh size $h$, and may stand for
different values at its different occurrences.

\subsection{Well-posedness of the continuous
problem} \label{Well_posedness_continuous_problem}

Due to the large variation of the permeability tensor, in order to
show that our analysis is independent of the parameters $\kappa$,
$\underline{\bm{\Sigma}}$ and $ \bm{V}$ are endowed with the norm 
\begin{subequations}\label{Cnorms}
\begin{align}
\label{Cnorm-Sigma}
\|\underline{\bm{\tau}}\|_{\underline{\bm{\Sigma}}}^2 &=
(\underline{\bm{\tau}}^d,\underline{\bm{\tau}}^d) +
\widetilde{\kappa}(\mathbf{div}\underline{\bm{\tau}},\mathbf{div}\underline{\bm{\tau}})
& \forall \underline{\bm{\tau}} \in \underline{\bm{\Sigma}},\\
\label{Cnorm-V}
\|\bm{v}\|_{\bm{V}}^2 &= \widetilde{\kappa}^{-1}(\bm{v},\bm{v}) &
\forall \bm{v} \in \bm{V},
\end{align}
\end{subequations}
where $\widetilde{\kappa} = \min\{\kappa,1\}$. We note that
$\|\cdot\|_{\underline{\bm{\Sigma}}}$ is indeed a norm due to the fact
that $\|\underline{\bm{\tau}}\|_0^2 \lesssim
\|\underline{\bm{\tau}}^d\|_0^2 +
\|\mathbf{div}\underline{\bm{\tau}}\|_0^2$ for all
$\underline{\bm{\tau}} \in \underline{\bm{\Sigma}}$ (cf.
\cite{Brezzi1991Mixed,Boffi2013Mixed}).

We introduce a new bilinear form
$A((\underline{\bm{\sigma}},\bm{u}),(\underline{\bm{\tau}},\bm{v}))$
on $(\underline{\bm{\Sigma}}\times\bm{V},\underline{\bm{\Sigma}}\times\bm{V})$,
i.e.
\begin{equation}\label{sec2_eq3}	
A((\underline{\bm{\sigma}},\bm{u}),(\underline{\bm{\tau}},\bm{v})) 
= a(\underline{\bm{\sigma}},\underline{\bm{\tau}}) +
b(\underline{\bm{\tau}},\bm{u})-b(\underline{\bm{\sigma}},\bm{v})+s(\bm{u},\bm{v}).
\end{equation}
Then, the problem \eqref{weak} can be transformed into the following
problem: Find $(\underline{\bm{\sigma}},\bm{u}) \in
\underline{\bm{\Sigma}}\times\bm{V}$, such that
\begin{equation}\label{weakBrinkman}	
A((\underline{\bm{\sigma}},\bm{u}),(\underline{\bm{\tau}},\bm{v}))=
F((\underline{\bm{\tau}},\bm{v})),
\end{equation}
where $F((\underline{\bm{\tau}},\bm{v}))=(\bm{f},\bm{v})+
\langle\underline{\bm{\tau}}\bm{n},\bm{g}\rangle_{\Gamma}$.  By
Cauchy-Schwarz inequality, we have the boundedness of
$A(\cdot,\cdot)$.
\begin{myLemma}\label{sec2_Lemma1} 
The bilinear form $A(\cdot,\cdot)$ satisfies
\begin{equation}\label{sec2_eq4}
A((\underline{\bm{\sigma}},\bm{u}),(\underline{\bm{\tau}},\bm{v}))
\lesssim
(\|\underline{\bm{\sigma}}\|_{\underline{\bm{\Sigma}}}+\|\bm{u}\|_{\bm{V}})(\|\underline{\bm{\tau}}\|_{\underline{\bm{\Sigma}}}+\|\bm{v}\|_{\bm{V}})
\quad \ \forall
(\underline{\bm{\sigma}},\bm{u}),(\underline{\bm{\tau}},\bm{v}) \in
\underline{\bm{\Sigma}}\times\bm{V}.
\end{equation}
\end{myLemma}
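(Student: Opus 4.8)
The plan is to estimate each of the four bilinear forms making up $A$ separately by the Cauchy--Schwarz inequality, and then to track carefully how the weight $\widetilde{\kappa}$ built into the norms \eqref{Cnorms} makes the parameter-dependent factors cancel. The guiding principle is that each summand of $A$ should be matched against exactly one of the four cross terms in the expansion of $(\|\underline{\bm{\sigma}}\|_{\underline{\bm{\Sigma}}}+\|\bm{u}\|_{\bm{V}})(\|\underline{\bm{\tau}}\|_{\underline{\bm{\Sigma}}}+\|\bm{v}\|_{\bm{V}})$. First I would dispose of the deviatoric term: since $(\underline{\bm{\tau}}^d,\underline{\bm{\tau}}^d)$ is one of the two nonnegative contributions to $\|\underline{\bm{\tau}}\|_{\underline{\bm{\Sigma}}}^2$, we have $\|\underline{\bm{\tau}}^d\|_0 \le \|\underline{\bm{\tau}}\|_{\underline{\bm{\Sigma}}}$, and hence $a(\underline{\bm{\sigma}},\underline{\bm{\tau}}) = \frac{1}{2}(\underline{\bm{\sigma}}^d,\underline{\bm{\tau}}^d) \le \frac{1}{2}\|\underline{\bm{\sigma}}\|_{\underline{\bm{\Sigma}}}\|\underline{\bm{\tau}}\|_{\underline{\bm{\Sigma}}}$.

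For the coupling term $b(\underline{\bm{\tau}},\bm{u}) = (\mathbf{div}\underline{\bm{\tau}},\bm{u})$ the idea is to split the unit factor $\widetilde{\kappa}^{1/2}\widetilde{\kappa}^{-1/2}$ across the two arguments: writing $(\mathbf{div}\underline{\bm{\tau}},\bm{u}) \le (\widetilde{\kappa}^{1/2}\|\mathbf{div}\underline{\bm{\tau}}\|_0)(\widetilde{\kappa}^{-1/2}\|\bm{u}\|_0)$ and noting that $\widetilde{\kappa}^{1/2}\|\mathbf{div}\underline{\bm{\tau}}\|_0 \le \|\underline{\bm{\tau}}\|_{\underline{\bm{\Sigma}}}$ while $\widetilde{\kappa}^{-1/2}\|\bm{u}\|_0 = \|\bm{u}\|_{\bm{V}}$, I obtain $b(\underline{\bm{\tau}},\bm{u}) \le \|\underline{\bm{\tau}}\|_{\underline{\bm{\Sigma}}}\|\bm{u}\|_{\bm{V}}$, and symmetrically $b(\underline{\bm{\sigma}},\bm{v}) \le \|\underline{\bm{\sigma}}\|_{\underline{\bm{\Sigma}}}\|\bm{v}\|_{\bm{V}}$; the $\widetilde{\kappa}$ powers cancel, leaving a constant independent of the permeability. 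The one step that demands attention, and thus the only genuine obstacle, is the reaction term $s(\bm{u},\bm{v}) = (\kappa^{-1}\bm{u},\bm{v})$, because here the true parameter $\kappa$ rather than the truncation $\widetilde{\kappa}$ appears. Cauchy--Schwarz gives $s(\bm{u},\bm{v}) \le \kappa^{-1}\|\bm{u}\|_0\|\bm{v}\|_0 = (\kappa^{-1}\widetilde{\kappa})\|\bm{u}\|_{\bm{V}}\|\bm{v}\|_{\bm{V}}$, so everything hinges on the elementary bound $\kappa^{-1}\widetilde{\kappa} = \kappa^{-1}\min\{\kappa,1\} \le 1$, which holds whether $\kappa \le 1$ (the ratio equals $1$) or $\kappa > 1$ (it equals $\kappa^{-1} < 1$). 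This is precisely where the definition $\widetilde{\kappa} = \min\{\kappa,1\}$ earns its keep and is what renders the estimate uniform across both the Darcy- and Stokes-dominated regimes.

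Finally I would collect the four estimates and recognize their right-hand sides as exactly the terms of the expanded product, so that $A((\underline{\bm{\sigma}},\bm{u}),(\underline{\bm{\tau}},\bm{v})) \lesssim (\|\underline{\bm{\sigma}}\|_{\underline{\bm{\Sigma}}}+\|\bm{u}\|_{\bm{V}})(\|\underline{\bm{\tau}}\|_{\underline{\bm{\Sigma}}}+\|\bm{v}\|_{\bm{V}})$ follows with an absolute constant. There is no real analytic difficulty here beyond the bookkeeping of the weights; the essential point is that the single genuinely $\kappa$-dependent factor, the one arising in $s(\cdot,\cdot)$, is dominated by the truncation $\widetilde{\kappa}$, so that the hidden constant in $\lesssim$ is independent of $\kappa$.
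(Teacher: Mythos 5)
Your proposal is correct and follows exactly the route the paper intends: the paper proves this lemma with the single remark ``By Cauchy--Schwarz inequality,'' and your term-by-term estimates (splitting $\widetilde{\kappa}^{1/2}\widetilde{\kappa}^{-1/2}$ in the $b$-terms and using $\kappa^{-1}\widetilde{\kappa}\le 1$ for $s$) are precisely the bookkeeping that remark leaves implicit. Nothing to add.
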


Next, we show the inf-sup condition of $A(\cdot, \cdot)$ at continuous
level. 
\begin{myLemma}\label{sec2_Lemma2} 
For any
$(\underline{\bm{\sigma}},\bm{u}),(\underline{\bm{\tau}},\bm{v}) \in
\underline{\bm{\Sigma}}\times\bm{V}$, we have
\begin{equation}\label{sec2_eq5}
\inf_{(\underline{\bm{\sigma}},\bm{u})\in \underline{\bm{\Sigma}}\times\bm{V}}\sup_{(\underline{\bm{\tau}},\bm{v})\in \underline{\bm{\Sigma}}\times\bm{V}}\frac{A((\underline{\bm{\sigma}},\bm{u}),(\underline{\bm{\tau}},\bm{v}))}{
(\|\underline{\bm{\sigma}}\|_{\underline{\bm{\Sigma}}}+\|\bm{u}\|_{\bm{V}})
(\|\underline{\bm{\tau}}\|_{\underline{\bm{\Sigma}}}+\|\bm{v}\|_{\bm{V}})}\gtrsim 1.
\end{equation}
\end{myLemma}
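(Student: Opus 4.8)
The plan is to establish \eqref{sec2_eq5} by the standard device of constructing, for an arbitrary fixed trial pair $(\underline{\bm{\sigma}},\bm{u}) \in \underline{\bm{\Sigma}}\times\bm{V}$, an explicit supremizer $(\underline{\bm{\tau}},\bm{v})$ satisfying $A((\underline{\bm{\sigma}},\bm{u}),(\underline{\bm{\tau}},\bm{v})) \gtrsim (\|\underline{\bm{\sigma}}\|_{\underline{\bm{\Sigma}}}+\|\bm{u}\|_{\bm{V}})^2$ together with $\|\underline{\bm{\tau}}\|_{\underline{\bm{\Sigma}}}+\|\bm{v}\|_{\bm{V}} \lesssim \|\underline{\bm{\sigma}}\|_{\underline{\bm{\Sigma}}}+\|\bm{u}\|_{\bm{V}}$; dividing then gives the bound. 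I would assemble the supremizer additively from three pieces. The first is the diagonal choice $(\underline{\bm{\sigma}},\bm{u})$: because the coupling terms in \eqref{sec2_eq3} enter with opposite signs, $b(\underline{\bm{\sigma}},\bm{u})$ cancels and one is left with the coercive contribution $a(\underline{\bm{\sigma}},\underline{\bm{\sigma}})+s(\bm{u},\bm{u}) = \tfrac{1}{2}\|\underline{\bm{\sigma}}^d\|_0^2 + \kappa^{-1}\|\bm{u}\|_0^2$. This already controls $\|\underline{\bm{\sigma}}^d\|_0^2$ and the Darcy-scaled velocity $\kappa^{-1}\|\bm{u}\|_0^2$, but it sees neither $\mathbf{div}\underline{\bm{\sigma}}$ nor enough of $\|\bm{u}\|_0^2$ in the Stokes regime.

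To recover the divergence I would add $-\widetilde{\kappa}\,\mathbf{div}\underline{\bm{\sigma}}$ to the velocity test function; since $-b(\underline{\bm{\sigma}},-\widetilde{\kappa}\,\mathbf{div}\underline{\bm{\sigma}}) = \widetilde{\kappa}\|\mathbf{div}\underline{\bm{\sigma}}\|_0^2$, this produces exactly the second term of $\|\underline{\bm{\sigma}}\|_{\underline{\bm{\Sigma}}}^2$, at the price of a cross term from $s(\cdot,\cdot)$ that I would absorb by Young's inequality using $\widetilde{\kappa}\kappa^{-1}\le 1$ and $\widetilde{\kappa}^{2}\kappa^{-1}\le\widetilde{\kappa}$. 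To recover the remaining velocity mass I would invoke the classical surjectivity of $\mathbf{div}$ on symmetric tensors (the inf-sup condition for $b$, cf. \cite{Brezzi1991Mixed,Boffi2013Mixed}): there exists $\underline{\bm{\tau}}_u \in \underline{\bm{\Sigma}}$ with $\mathbf{div}\underline{\bm{\tau}}_u = \bm{u}$ and $\|\underline{\bm{\tau}}_u\|_{\bm{{\rm div}}} \lesssim \|\bm{u}\|_0$, the constraint $\int_\Omega \mathrm{tr}(\underline{\bm{\tau}}_u)=0$ being enforced by subtracting a constant multiple of $\underline{\bm{I}}$, which alters neither the divergence nor the norm bound. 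Adding a small multiple $\delta\underline{\bm{\tau}}_u$ to the stress test function contributes $\delta\|\bm{u}\|_0^2$ through $b$, plus a cross term $\tfrac{\delta}{2}(\underline{\bm{\sigma}}^d,\underline{\bm{\tau}}_u^d)$ that Young's inequality absorbs into $\tfrac12\|\underline{\bm{\sigma}}^d\|_0^2$ once $\delta$ is small.

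Collecting the three pieces and fixing the constants, I expect the lower bound $A \gtrsim \|\underline{\bm{\sigma}}^d\|_0^2 + \widetilde{\kappa}\|\mathbf{div}\underline{\bm{\sigma}}\|_0^2 + (\kappa^{-1}+\delta)\|\bm{u}\|_0^2$. The first two terms reproduce $\|\underline{\bm{\sigma}}\|_{\underline{\bm{\Sigma}}}^2$; for the velocity I would use the elementary identity $\widetilde{\kappa}^{-1} = \max\{\kappa^{-1},1\} \le \kappa^{-1}+1$, so that $(\kappa^{-1}+\delta)\|\bm{u}\|_0^2 \gtrsim \widetilde{\kappa}^{-1}\|\bm{u}\|_0^2 = \|\bm{u}\|_{\bm{V}}^2$, which is exactly where both regimes are reconciled uniformly. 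It then remains to verify the supremizer norm: $\|\delta\underline{\bm{\tau}}_u\|_{\underline{\bm{\Sigma}}} \lesssim \|\bm{u}\|_0 \le \|\bm{u}\|_{\bm{V}}$ via $\widetilde{\kappa}\le 1$, and $\|\widetilde{\kappa}\,\mathbf{div}\underline{\bm{\sigma}}\|_{\bm{V}} = \widetilde{\kappa}^{1/2}\|\mathbf{div}\underline{\bm{\sigma}}\|_0 \le \|\underline{\bm{\sigma}}\|_{\underline{\bm{\Sigma}}}$, whence $\|\underline{\bm{\tau}}\|_{\underline{\bm{\Sigma}}}+\|\bm{v}\|_{\bm{V}} \lesssim \|\underline{\bm{\sigma}}\|_{\underline{\bm{\Sigma}}}+\|\bm{u}\|_{\bm{V}}$.

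\textbf{Main obstacle.} The delicate point is parameter robustness of the velocity estimate in the Stokes-dominated regime $\kappa \gg 1$, where the diagonal test yields only the weak term $\kappa^{-1}\|\bm{u}\|_0^2$ while the norm demands $\|\bm{u}\|_{\bm{V}}^2 = \|\bm{u}\|_0^2$. The resolution is to use the \emph{unscaled} surjectivity tensor $\underline{\bm{\tau}}_u$ of piece three together with the split $\widetilde{\kappa}^{-1} \le \kappa^{-1}+1$; any attempt to scale $\underline{\bm{\tau}}_u$ by $\widetilde{\kappa}^{-1}$ to match $\|\bm{u}\|_{\bm{V}}^2$ directly would enlarge its deviatoric part beyond what the $a$-cross term can absorb. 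Choosing the scalings and ordering the Young absorptions so that every constant stays independent of $\kappa$ is therefore the crux of the argument.
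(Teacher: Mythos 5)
Your proposal is correct and follows essentially the same route as the paper: the supremizer $\underline{\bm{\tau}}=\underline{\bm{\sigma}}+\delta\underline{\bm{\tau}}_u$ with $\mathbf{div}\underline{\bm{\tau}}_u=\bm{u}$ (trace-normalized) and $\bm{v}=\bm{u}-\widetilde{\kappa}\,\mathbf{div}\underline{\bm{\sigma}}$ is exactly the paper's ansatz $\underline{\bm{\tau}}=\underline{\bm{\sigma}}+\alpha\widetilde{\underline{\bm{\sigma}}}$, $\bm{v}=\delta_1\bm{u}-\delta_2\mathbf{div}\underline{\bm{\sigma}}$, up to the choice of constants. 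The only cosmetic difference is that the paper picks $\delta_1=1-\widetilde{\kappa}/(2\kappa)$, $\delta_2=\widetilde{\kappa}/2$ so the $(\mathbf{div}\underline{\bm{\sigma}},\bm{u})$ cross term cancels identically, whereas you take $\delta_1=1$ and absorb the leftover $\kappa^{-1}\widetilde{\kappa}(\bm{u},\mathbf{div}\underline{\bm{\sigma}})$ by Young's inequality; both yield the same uniform bounds.
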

\begin{proof} For any $\bm{u} \in \bm{V}$, there exists
${\underline{\bm{\sigma}}}^* \in \underline{\bm{H}}( \bm{{\rm
div}};\mathbb{S})$ and a positive constant $C_0 > 0$ such that (cf.
\cite{Arnold1984A2,Brezzi1991Mixed})
\[
\mathbf{div}{\underline{\bm{\sigma}}}^* = \bm{u} ~~{\rm in} \ \Omega,
\quad {\rm and}\quad
\|{\underline{\bm{\sigma}}}^*\|_{\mathbf{div}}\leq C_0\|\bm{u}\|_0.
\]
Set $\gamma = \frac{1}{|\Omega|}
\int_{\Omega}\mathrm{tr}({\underline{\bm{\sigma}}}^*)d\bm{x}$ and
$\widetilde{\underline{\bm{\sigma}}} =
{\underline{\bm{\sigma}}}^*-\frac{\gamma}{n}\underline{\bm{I}}$.
Then, it is straightforward to show that
\begin{equation}\label{sec2_eq6}
\widetilde{\underline{\bm{\sigma}}} \in \underline{\bm{\Sigma}}, \quad
\mathbf{div}\widetilde{\underline{\bm{\sigma}}} = \bm{u} ~~{\rm in}
\ \Omega, \quad {\rm and}\quad
\|\widetilde{\underline{\bm{\sigma}}}\|_{\mathbf{div}} \leq
C_0\|\bm{u}\|_0.
\end{equation}
We take $\underline{\bm{\tau}} = \underline{\bm{\sigma}} +
\alpha\widetilde{\underline{\bm{\sigma}}}$ and $\bm{v} =
\delta_1\bm{u}-\delta_2\mathbf{div}\underline{\bm{\sigma}}$, where the
non-negative coefficients $\alpha$, $\delta_1$ and $\delta_2$ will be
specified later. Then, according to Cauchy-Schwarz inequality and
\eqref{sec2_eq6}, one gets
\begin{align*}
A((\underline{\bm{\sigma}},\bm{u}),(\underline{\bm{\tau}},\bm{v}))&=\frac{1}{2}(\underline{\bm{\sigma}}^d,\underline{\bm{\sigma}}^d
    + \alpha\widetilde{\underline{\bm{\sigma}}}^d) +
(\mathbf{div}(\underline{\bm{\sigma}} +
              \alpha\widetilde{\underline{\bm{\sigma}}}),\bm{u}) -
(\mathbf{div}\underline{\bm{\sigma}},\delta_1\bm{u}-
 \delta_2\mathbf{div}\underline{\bm{\sigma}}) +
(\kappa^{-1}\bm{u},\delta_1\bm{u}-\delta_2\mathbf{div}\underline{\bm{\sigma}})
\\
&=\frac{1}{2}\|\underline{\bm{\sigma}}^d\|^2+\frac{\alpha}{2}(\underline{\bm{\sigma}}^d,\widetilde{\underline{\bm{\sigma}}})+
(1-\delta_1-\kappa^{-1}\delta_2)(\mathbf{div}\underline{\bm{\sigma}},\bm{u})+(\alpha+\kappa^{-1}\delta_1)\|\bm{u}\|_0^2+\delta_2\|\mathbf{div}\underline{\bm{\sigma}}\|_0^2
\\
&\geq(\frac{1}{2}-\frac{\alpha\varepsilon}{4})\|\underline{\bm{\sigma}}^d\|^2+
(1-\delta_1-\kappa^{-1}\delta_2)(\mathbf{div}\underline{\bm{\sigma}},\bm{u})+(\alpha+\kappa^{-1}\delta_1-\frac{\alpha
C_0^2}{4\varepsilon})\|\bm{u}\|_0^2+\delta_2\|\mathbf{div}\underline{\bm{\sigma}}\|_0^2.	
\end{align*}
From above inequality, let $\delta_1 = 1-
\frac{\widetilde{\kappa}}{2\kappa}$, $\delta_2 =
\frac{\widetilde{\kappa}}{2}$, $\varepsilon = \frac{C_0^2}{2}$ and
$\alpha=\frac{2}{C_0^2}$. Then, it holds that 
\begin{align*}
&\frac{1}{2}-\frac{\alpha\varepsilon}{4}=\frac{1}{4},\quad
1-\delta_1-\kappa^{-1}\delta_2=0,\\
&\delta_2 =
\frac{\widetilde{\kappa}}{2}\gtrsim\widetilde{\kappa},\quad
\alpha+\kappa^{-1}\delta_1-\frac{\alpha C_0^2}{4\varepsilon} =
\frac{1}{C_0^2} + \kappa^{-1} (1-\frac{\widetilde{\kappa}}{2\kappa})
\gtrsim \widetilde{\kappa}^{-1}.
\end{align*}
Here, we use the fact that $\frac{1}{2} \leq \delta_1 =
1-\frac{\widetilde{\kappa}}{2\kappa} < 1$ due to the definition of
$\widetilde{\kappa}$.  Then, we obtain
\[
A((\underline{\bm{\sigma}},\bm{u}),(\underline{\bm{\tau}},\bm{v}))
\gtrsim(\|\underline{\bm{\sigma}}\|_{\underline{\bm{\Sigma}}}+\|\bm{u}\|_{\bm{V}})^2.
\]
Next, taking $\alpha$, $\delta_1$ and $\delta_2$ in
$\underline{\bm{\tau}}$ and $\bm{v}$, by \eqref{sec2_eq6} and the fact that $\widetilde{\kappa} \leq 1 \leq \widetilde{\kappa}^{-1}$, one finds
\begin{align*}
\|\underline{\bm{\tau}}\|_{\underline{\bm{\Sigma}}}^2&=
\|\underline{\bm{\sigma}}^d +
\alpha\widetilde{\underline{\bm{\sigma}}}^d\|_0^2 + \widetilde{\kappa}
(\mathbf{div}\underline{\bm{\sigma}} + \alpha
 \mathbf{div}\widetilde{\underline{\bm{\sigma}}},
 \mathbf{div}\underline{\bm{\sigma}} + \alpha
 \mathbf{div}\widetilde{\underline{\bm{\sigma}}})\\
&\lesssim \|\underline{\bm{\sigma}}\|_{\underline{\bm{\Sigma}}}^2 +
\|\widetilde{\underline{\bm{\sigma}}}^d\|_0^2+
\widetilde{\kappa}(\bm{u}, \bm{u}) \\
&\lesssim(\|\underline{\bm{\sigma}}\|_{\underline{\bm{\Sigma}}}+\|\bm{u}\|_{\bm{V}})^2,\\
\|\bm{v}\|_{\bm{V}}^2&=\widetilde{\kappa}^{-1}\|\delta_1\bm{u}-\delta_2\mathbf{div}\underline{\bm{\sigma}}\|_0^2\\
&\lesssim \widetilde{\kappa}^{-1}(\bm{u},\bm{u})
+\widetilde{\kappa}^{-1}(\widetilde{\kappa}\mathbf{div}\underline{\bm{\sigma}},\widetilde{\kappa}\mathbf{div}\underline{\bm{\sigma}})\\
&\lesssim(\|\bm{u}\|_{\bm{V}}+\|\underline{\bm{\sigma}}\|_{\underline{\bm{\Sigma}}})^2.
\end{align*}
Then, we finish this proof.	
\end{proof}

From Lemma \ref{sec2_Lemma1} and Lemma \ref{sec2_Lemma2}, we get the
well-posedness of the problem \eqref{weakBrinkman}.
\begin{myTheorem}\label{sec2_Lemma3} Given $\bm{f} \in
\bm{L}^2(\Omega)$ and $\bm{g} \in \bm{H}^{1/2}(\Gamma)$, the problem
\eqref{weakBrinkman} has a unique solution
$(\underline{\bm{\sigma}},\bm{u}) \in
\underline{\bm{\Sigma}}\times\bm{V}$.
\end{myTheorem}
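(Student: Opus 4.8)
The plan is to invoke the Banach--Ne\v{c}as--Babu\v{s}ka (BNB) theorem---the generalization of the Lax--Milgram lemma to noncoercive but inf-sup stable bilinear forms (cf.\ \cite{Boffi2013Mixed})---applied to the single Hilbert space $X := \underline{\bm{\Sigma}}\times\bm{V}$ endowed with the graph norm $\|(\underline{\bm{\tau}},\bm{v})\|_X := \|\underline{\bm{\tau}}\|_{\underline{\bm{\Sigma}}} + \|\bm{v}\|_{\bm{V}}$. Since here the trial and test spaces coincide, the theorem yields a unique solution of \eqref{weakBrinkman} once four ingredients are in place: (a) $A(\cdot,\cdot)$ is bounded on $X\times X$; (b) $A$ satisfies the inf-sup condition; (c) $A$ is nondegenerate in its first argument, i.e.\ $A((\underline{\bm{\sigma}},\bm{u}),(\underline{\bm{\tau}},\bm{v}))=0$ for all $(\underline{\bm{\sigma}},\bm{u})\in X$ forces $(\underline{\bm{\tau}},\bm{v})=\bm{0}$; and (d) the functional $F$ lies in the dual $X'$. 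Ingredient (a) is exactly Lemma \ref{sec2_Lemma1}, and (b) is Lemma \ref{sec2_Lemma2}.

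For (d) I would verify directly that $F$ is a bounded linear functional on $X$: the volume term obeys $(\bm{f},\bm{v}) \le \|\bm{f}\|_0\,\|\bm{v}\|_0 \le \|\bm{f}\|_0\,\|\bm{v}\|_{\bm{V}}$ since $\widetilde{\kappa}\le 1$ by \eqref{Cnorm-V}, while the boundary term is controlled by the normal-trace duality $\langle\underline{\bm{\tau}}\bm{n},\bm{g}\rangle_{\Gamma} \lesssim \|\bm{g}\|_{1/2,\Gamma}\,\|\underline{\bm{\tau}}\|_{\bm{{\rm div}}}$; combined with the norm equivalence $\|\underline{\bm{\tau}}\|_0^2\lesssim\|\underline{\bm{\tau}}^d\|_0^2+\|\mathbf{div}\underline{\bm{\tau}}\|_0^2$ recorded after \eqref{Cnorms}, this shows $F\in X'$ for each fixed $\kappa$, which is all that existence and uniqueness require.

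The only ingredient not yet in hand is the transpose nondegeneracy (c), and I expect this to be the crux of the argument. The plan is to deduce it from Lemma \ref{sec2_Lemma2} by exploiting the structural (near-)symmetry of $A$: using the symmetry of $a(\cdot,\cdot)$ and $s(\cdot,\cdot)$ together with the skew appearance of $b(\cdot,\cdot)$ in \eqref{sec2_eq3}, a direct computation gives the identity
\begin{equation*}
A((\underline{\bm{\sigma}},\bm{u}),(\underline{\bm{\tau}},\bm{v}))
= A((\underline{\bm{\tau}},-\bm{v}),(\underline{\bm{\sigma}},-\bm{u}))
\qquad \forall\,(\underline{\bm{\sigma}},\bm{u}),(\underline{\bm{\tau}},\bm{v})\in X.
\end{equation*}
Because $\|\cdot\|_X$ is invariant under the sign flip $\bm{u}\mapsto-\bm{u}$, this identity lets me recast the supremum over the first argument as a supremum over the second argument of $A$ evaluated at the fixed entry $(\underline{\bm{\tau}},-\bm{v})$, whence Lemma \ref{sec2_Lemma2} applied to that entry yields
\begin{equation*}
\sup_{(\underline{\bm{\sigma}},\bm{u})\in X}\frac{A((\underline{\bm{\sigma}},\bm{u}),(\underline{\bm{\tau}},\bm{v}))}{\|(\underline{\bm{\sigma}},\bm{u})\|_X}\gtrsim \|(\underline{\bm{\tau}},\bm{v})\|_X,
\end{equation*}
which forces $(\underline{\bm{\tau}},\bm{v})=\bm{0}$ whenever the numerator vanishes for every $(\underline{\bm{\sigma}},\bm{u})$. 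With (a)--(d) established, the BNB theorem delivers existence, uniqueness, and the a priori estimate $\|(\underline{\bm{\sigma}},\bm{u})\|_X\lesssim \|F\|_{X'}$. The feature worth flagging is that the inf-sup and boundedness constants furnished by Lemmas \ref{sec2_Lemma1} and \ref{sec2_Lemma2} are independent of $\kappa$, so the induced solution operator is bounded uniformly in the parameter---precisely the robustness the parameter-dependent norms \eqref{Cnorms} were designed to capture.
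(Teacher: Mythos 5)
Your proposal is correct and follows essentially the same route as the paper, which simply concludes well-posedness from Lemma \ref{sec2_Lemma1} (boundedness) and Lemma \ref{sec2_Lemma2} (inf-sup) via the Babu\v{s}ka--Ne\v{c}as theory. The extra steps you supply---checking $F\in X'$ and deriving the transpose nondegeneracy from the identity $A((\underline{\bm{\sigma}},\bm{u}),(\underline{\bm{\tau}},\bm{v}))=A((\underline{\bm{\tau}},-\bm{v}),(\underline{\bm{\sigma}},-\bm{u}))$---are valid and merely make explicit what the paper leaves implicit.
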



\section{MDG method}\label{MDGdiscretizations} 

In this section, we formulate the MDG method for the Brinkman problem
in the pseudostress-velocity formulation and show that it has a unique
solution.

\subsection{Derivation of the MDG scheme}\label{Derivation_MDG}

Let $\{\mathcal {T}_h\}_h$ be a family of quasi-regular decomposition
of the domain $\overline{\Omega}$ into triangles (tetrahedrons), $h_K$
be the diameter of the element $K\in \mathcal {T}_h$ and $h=\max\{h_K:
K\in \mathcal {T}_h\}$. We denote the union of the boundaries of all
the $K \in \mathcal {T}_h$ by $\mathcal{E}_h$, $\mathcal{E}^i_h$ is
the set of all the interior edges and
$\mathcal{E}_h^\partial=\mathcal{E}_h/\mathcal{E}^i_h$ is the set of
boundary edges. Let $\underline{\bm{\nabla}}_h$ and $\mathbf{div}_h$
be the broken gradient and divergence operators whose restrictions on
each element $K \in \mathcal{T}_h$ are equal to
$\underline{\bm{\nabla}}$ and $\mathbf{div}$, respectively. In
addition, given an integer $k\geq0$, we denote by $\mathcal{P}_k(D)$
the space of polynomials defined in $D$ of total degree at most $k$.
Recall the notation for vector-valued, tensor-valued and
symmetric-tensor-valued function spaces, we have
$\bm{\mathcal{P}}_k(D)=[\mathcal{P}_k(D)]^n$,
$\underline{\bm{\mathcal{P}}}^{\mathbb{M}}_{k}(D)=[\mathcal{P}_k(D)]^{n\times
n}$, and
$\underline{\bm{\mathcal{P}}}^{\mathbb{S}}_{k}(D)=\{\underline{\bm{\tau}}
\in [\mathcal{P}_k(D)]^{n\times n}: \underline{\bm{\tau}}^t =
\underline{\bm{\tau}} \}$. Construct the discontinuous finite element
spaces $\underline{\bm{\Sigma}}_h$ and $\bm{V}_h$ by
\begin{subequations} \label{Dspaces}
\begin{align}
\label{Dspace-Sigma}
&\underline{\bm{\Sigma}}_h=\{\underline{\bm{\tau}}_h \in
\underline{\bm{L}}^2(\Omega;\mathbb{S}): \underline{\bm{\tau}}_h \in
\underline{\bm{\mathcal{P}}}^{\mathbb{S}}_{k+1}(K)\quad\forall\,K \in
\mathcal {T}_h,\ \int_{\Omega}{\rm
  tr}(\underline{\bm{\tau}}_h)d\bm{x}= 0\},\\
\label{Dspace-V}
&\bm{V}_h=\{\bm{v}_h \in \bm{L}^2(\Omega):  \bm{v}_h \in
\bm{\mathcal{P}}_{k}(K)\quad\forall\,K \in \mathcal {T}_h\}.
\end{align}
\end{subequations}
The norm of $\underline{\bm{\Sigma}}_h$ is defined by
\begin{equation}\label{norm}
\|\underline{\bm{\tau}}_h\|_{\underline{\bm{\Sigma}}_h}^2=\|\underline{\bm{\tau}}_h^d\|_0^2+\widetilde{\kappa}\|\mathbf{div}_h\underline{\bm{\tau}}_h\|^2_{0}+|\underline{\bm{\tau}}_h|_*^2,
\end{equation}
where $|\underline{\bm{\tau}}_h|_*^2=\sum_{e
\in\mathcal{E}^i_h}h_e^{-1}\|[\underline{\bm{\tau}}_h]\|_{e}^2$, $h_e$
is the length of edge $e$, and $\|\cdot\|_{e}$ denotes the $L^2$-norm
on edge $e$.

For an interior edge $e \in \mathcal{E}^i_h$ shared by elements $K^+$
and $K^-$, we define the unit normal vectors $\bm{n}^+$ and $\bm{n}^-$
on $e$ pointing exterior to $K^+$ and $K^-$, respectively. Similarly,
we define vector-valued functions $\bm{v}^{\pm}=\bm{v}|_{\partial
K^{\pm}}$ and tensor-valued functions
$\underline{\bm{\tau}}^{\pm}=\underline{\bm{\tau}}|_{\partial
K^{\pm}}$. Then define the averages $\{\cdot\}$ and the jumps
$\llbracket \cdot \rrbracket $, $[\cdot]$ on $e\in \mathcal{E}^i_h$ by
\begin{align*}
&\{\bm{v}\} = \frac{1}{2}(\bm{v}^+ + \bm{v}^-), \qquad \llbracket
\bm{v} \rrbracket = \frac{1}{2}(\bm{v}^+\otimes\bm{n}^+ +
\bm{v}^-\otimes\bm{n}^- + \bm{n}^+\otimes\bm{v}^+ +
\bm{n}^-\otimes\bm{v}^-),\\
&\{\underline{\bm{\tau}}\} = \frac{1}{2}(\underline{\bm{\tau}}^+ +
\underline{\bm{\tau}}^-), \qquad [\underline{\bm{\tau}}] =
\frac{1}{2}(\underline{\bm{\tau}}^+\bm{n}^+ +
\underline{\bm{\tau}}^-\bm{n}^-),
\end{align*}
where $\bm{v}\otimes\bm{w}$ is a matrix with $v_iw_j$ as its
$(i,j)$-th element. On boundary edge $e \in \mathcal{E}_h^\partial$,
we set
\begin{align*}
&\{\bm{v}\} = \bm{v}, \qquad\qquad \llbracket \bm{v} \rrbracket =
\frac{1}{2}(\bm{v}\otimes\bm{n} + \bm{n}\otimes\bm{v}),\\
&\{\underline{\bm{\tau}}\} = \underline{\bm{\tau}}, \qquad\qquad
[\underline{\bm{\tau}}] = \underline{\bm{\tau}}\bm{n}.
\end{align*}

For any tensor-valued function $\underline{\bm{\tau}}$ and
vector-valued function $\bm{v}$, a straightforward computation shows
that
\begin{equation}\label{sec3_eq0}
\sum_{T\in\mathcal{T}_h}\int_{\partial
  K}\underline{\bm{\tau}}\bm{n}_K\cdot\bm{v}ds =
  \sum_{e\in\mathcal{E}_h^i}\int_e[\underline{\bm{\tau}}]\cdot\{\bm{v}\}ds
  + \sum_{e\in\mathcal{E}_h}\int_e\{\underline{\bm{\tau}}\}:\llbracket
  \bm{v} \rrbracket ds.
\end{equation}

Let us derive the MDG scheme for problem \eqref{Brinkman1}.
Multiplying \eqref{Brinkman1_eq1} by a test function
$\underline{\bm{\tau}}_h$ and \eqref{Brinkman1_eq2} by a test function
$\bm{v}_h$, respectively, integrating on any element $K \in \mathcal
{T}_h$ and applying the Green's formula, we obtain
\begin{subequations}\label{weakBrinkman0}
\begin{align}
\label{weakBrinkman0_eq1}
  &\frac{1}{2}(\underline{\bm{\sigma}}^d,\underline{\bm{\tau}}_h^d)_K
  +
  (\mathbf{div}\underline{\bm{\tau}}_h,\bm{u})_K-\langle\underline{\bm{\tau}}_h\bm{n}_K,\bm{u}\rangle_{\partial
    K} =0 &\forall \underline{\bm{\tau}}_h \in
    \underline{\bm{\Sigma}}_h,\\
	\label{weakBrinkman0_eq2}
  &(\bm{\kappa}^{-1}\bm{u},\bm{v}_h)_K +
  (\underline{\bm{\sigma}},\underline{\bm{\varepsilon}}_h(\bm{v}_h))_K
  -\langle\underline{\bm{\sigma}}\bm{n}_K,\bm{v}_h\rangle_{\partial
    K}= (\bm{f},\bm{v}_h)_K  &\forall \bm{v}_h \in \bm{V}_h.
	\end{align}
\end{subequations}
Here, $\underline{\bm{\varepsilon}}_h(\bm{v}_h) =
(\underline{\bm{\nabla}}_h\bm{v}_h+(\underline{\bm{\nabla}}_h\bm{v}_h)^t)/2$.

Then, we approximate $\underline{\bm{\sigma}}$ and $\bm{u}$ by
$\underline{\bm{\sigma}}_h \in \underline{\bm{\Sigma}}_h$ and
$\bm{u}_h \in \bm{V}_h$, respectively, and the trace of
$\underline{\bm{\sigma}}$ and $\bm{u}$ on element edge by the
numerical fluxes $\widehat{\underline{\bm{\sigma}}}_h$ and
$\widehat{\bm{u}}_h$. Summing on all $K \in \mathcal {T}_h$, we get
\begin{subequations}\label{weakBrinkman1}
\begin{align}
\label{weakBrinkman1_eq1} 
&\frac{1}{2}(\underline{\bm{\sigma}}_h^d,\underline{\bm{\tau}}_h^d)+(\mathbf{div}_h\underline{\bm{\tau}}_h,\bm{u}_h)-\langle\underline{\bm{\tau}}_h\bm{n}_K,\widehat{\bm{u}}_h\rangle_{\partial
\mathcal {T}_h}=0 &\forall \underline{\bm{\tau}}_h \in
\underline{\bm{\Sigma}}_h,\\
\label{weakBrinkman1_eq2}
&(\bm{\kappa}^{-1}\bm{u}_h,\bm{v}_h)+
(\underline{\bm{\sigma}}_h,\underline{\bm{\varepsilon}}_h(\bm{v}_h))
-\langle\widehat{\underline{\bm{\sigma}}}_h\bm{n}_K,\bm{v}_h\rangle_{\partial
\mathcal {T}_h}= (\bm{f},\bm{v}_h)  &\forall \bm{v}_h \in \bm{V}_h.
\end{align}
\end{subequations}
By Green's formula and \eqref{sec3_eq0}, we have
\begin{subequations}\label{weakBrinkman2}
\begin{align}
\label{weakBrinkman2_eq1}
&\frac{1}{2}(\underline{\bm{\sigma}}_h^d,\underline{\bm{\tau}}_h^d)+(\mathbf{div}_h\underline{\bm{\tau}}_h,\bm{u}_h)-\int_{\mathcal{E}_h^i}[\underline{\bm{\tau}}_h]\cdot\{\widehat{\bm{u}}_h\}ds-\int_{\mathcal{E}_h}\{\underline{\bm{\tau}}_h\}:\llbracket
\widehat{\bm{u}}_h \rrbracket ds=0 &\forall \underline{\bm{\tau}}_h
\in \underline{\bm{\Sigma}}_h,\\
  \label{weakBrinkman2_eq2}
&(\bm{\kappa}^{-1}\bm{u}_h,\bm{v}_h)-(\mathbf{div}_h\underline{\bm{\sigma}}_h,\bm{v}_h)+\int_{\mathcal{E}_h^i}[\underline{\bm{\sigma}}_h-\widehat{\underline{\bm{\sigma}}}_h]\cdot\{\bm{v}_h\}ds+\int_{\mathcal{E}_h}\{\underline{\bm{\sigma}}_h-\widehat{\underline{\bm{\sigma}}}_h\}:\llbracket
\bm{v}_h \rrbracket ds= (\bm{f},\bm{v}_h)  &\forall \bm{v}_h \in
\bm{V}_h.
\end{align}
\end{subequations}
We define the numerical fluxes $\widehat{\underline{\bm{\sigma}}}_h$
and $\widehat{\bm{u}}_h$ by
\begin{align}
\label{sec3_eq1}
&\widehat{\underline{\bm{\sigma}}}_h = \{\underline{\bm{\sigma}}_h\} \quad \mbox{and}\quad \widehat{\bm{u}}_h=\{\bm{u}_h\}-\frac{\eta_e}{h_e}[\underline{\bm{\sigma}}_h] &\qquad {\rm on} \ e \in \mathcal{E}^i_h,\\
\label{sec3_eq2}
&\widehat{\underline{\bm{\sigma}}}_h = \underline{\bm{\sigma}}_h \quad
\mbox{and} \quad \widehat{\bm{u}}_h= \bm{g} &\qquad {\rm on} \ e \in
\mathcal{E}^{\partial}_h,
\end{align}
where the penalty parameter $\eta_e=\mathcal{O}(1)$. For simplicity,
we choose $\eta_e=1$ in the analysis.

With such choices and symmetry properties of
$\underline{\bm{\tau}}_h$, the MDG method of the problem
\eqref{Brinkman1} is to find $(\underline{\bm{\sigma}}_h,\bm{u}_h) \in
\underline{\bm{\Sigma}}_h\times\bm{V}_h$ such that
\begin{subequations}\label{weakBrinkman3}
	\begin{align}
	\label{sec3_eq3}
  &a_h(\underline{\bm{\sigma}}_h,\underline{\bm{\tau}}_h) +
  b_h(\underline{\bm{\tau}}_h,\bm{u}_h) =
  \langle\underline{\bm{\tau}}_h\bm{n},\bm{g}\rangle_{\mathcal{E}_h^\partial}
  &\forall \underline{\bm{\tau}}_h \in \underline{\bm{\Sigma}}_h,\\
	\label{sec3_eq4}
  &b_h(\underline{\bm{\sigma}}_h,\bm{v}_h)-s(\bm{u}_h,\bm{v}_h) =
  -(\bm{f},\bm{v}_h) &\forall \bm{v}_h \in \bm{V}_h,
  \end{align}
\end{subequations}
where
\begin{align*}
&a_h(\underline{\bm{\sigma}}_h,\underline{\bm{\tau}}_h)=
\frac{1}{2}(\underline{\bm{\sigma}}_h^d,\underline{\bm{\tau}}_h^d)+\int_{\mathcal{E}_h^i}\frac{1}{h_e}[\underline{\bm{\sigma}}_h]\cdot[\underline{\bm{\tau}}_h]ds
&\forall \underline{\bm{\sigma}}_h, \underline{\bm{\tau}}_h \in
\underline{\bm{\Sigma}}_h,\\
&b_h(\underline{\bm{\tau}}_h,\bm{v}_h) =
(\mathbf{div}_h\underline{\bm{\tau}}_h,\bm{v}_h)-\int_{\mathcal{E}_h^i}[\underline{\bm{\tau}}_h]\cdot\{\bm{v}_h\}ds
&\forall \underline{\bm{\tau}}_h \in \underline{\bm{\Sigma}}_h,
\forall \bm{v}_h \in \bm{V}_h.
\end{align*}

\subsection{Well-posedness of the MDG method}\label{Well_posedness}

In this subsection, we show the well-posedness of the MDG scheme
\eqref{weakBrinkman3}. First, we give some inequalities by lemmas.

The first lemma is a discrete analogy of \cite[Proposition
9.1.1]{Boffi2013Mixed}, which indicates the well-posedness of the
discrete norm \eqref{norm}.
\begin{myLemma}[Lemma 3.3 in \cite{Gatica2015Analysis}]\label{sec3_Lemma0}
For every $\underline{\bm{\tau}}_h \in \underline{\bm{\Sigma}}_h$, it
holds
\[
\|\underline{\bm{\tau}}_h\|_0^2
\lesssim\|\underline{\bm{\tau}}_h^d\|_0^2
+\|\mathbf{div}_h\underline{\bm{\tau}}_h\|_0^2+|[\underline{\bm{\tau}}_h]|_*^2.
\]
\end{myLemma}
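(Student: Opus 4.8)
The claim is a discrete version of the standard continuous inequality $\|\underline{\bm{\tau}}\|_0^2 \lesssim \|\underline{\bm{\tau}}^d\|_0^2 + \|\mathbf{div}\,\underline{\bm{\tau}}\|_0^2$ valid on $\underline{\bm{H}}(\bm{\mathrm{div}};\mathbb{S})$ with zero-mean trace. For discontinuous $\underline{\bm{\tau}}_h$, the distributional divergence picks up interface contributions, so we must control $\|\underline{\bm{\tau}}_h\|_0$ by the broken divergence plus a jump seminorm. Since the statement is attributed to Gatica et al., I expect the proof to reduce the discrete case to the continuous one via a lifting/regularization argument.

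**Overall strategy and key steps.**

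Let me sketch how I would prove it. By the continuous decomposition \eqref{sec2_eq2_1}, it suffices to bound $\|\mathrm{tr}(\underline{\bm{\tau}}_h)\|_0$, since $\|\underline{\bm{\tau}}_h^d\|_0$ already appears on the right-hand side. The difficulty is that $\mathrm{tr}(\underline{\bm{\tau}}_h)$ has zero mean over $\Omega$ but is discontinuous across interfaces, so I cannot directly invoke the continuous Poincaré-type inequality on the trace. The plan is to exploit the zero-mean constraint through a Stokes/Nečas-type duality argument: given a zero-mean scalar $q_h = \mathrm{tr}(\underline{\bm{\tau}}_h)$, choose $\bm{v}\in\bm{H}^1_0(\Omega)$ with $\mathrm{div}\,\bm{v} = q_h$ and $\|\bm{v}\|_1 \lesssim \|q_h\|_0$, which exists because $\Omega$ is Lipschitz and $q_h$ has zero average over $\Omega$. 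Then I would write $\|q_h\|_0^2 = (q_h,\mathrm{div}\,\bm{v})$ and integrate by parts elementwise, moving the divergence onto $\underline{\bm{\tau}}_h$ and collecting the interface terms.

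**Carrying out the duality.**

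Concretely, since $q_h\underline{\bm{I}} = \underline{\bm{\tau}}_h - \underline{\bm{\tau}}_h^d$ up to the scalar factor $n$ (recall $\mathrm{tr}(\underline{\bm{\tau}}_h^d)=0$), I would relate $(q_h,\mathrm{div}\,\bm{v})$ to $(\underline{\bm{\tau}}_h,\underline{\bm{\nabla}}\bm{v})$ and $(\underline{\bm{\tau}}_h^d,\underline{\bm{\nabla}}\bm{v})$ through the algebra in \eqref{sec2_eq}. Applying the elementwise Green's formula \eqref{Green-formula} on each $K\in\mathcal{T}_h$ and then the summation identity \eqref{sec3_eq0}, the volume term produces $-(\mathbf{div}_h\underline{\bm{\tau}}_h,\bm{v})$ and the interface terms produce $\sum_{e\in\mathcal{E}_h^i}\int_e [\underline{\bm{\tau}}_h]\cdot\{\bm{v}\}\,ds$, because $\bm{v}\in\bm{H}^1_0$ is continuous (so its jump $\llbracket\bm{v}\rrbracket$ vanishes on interior edges and $\bm{v}=0$ on $\Gamma$). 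Each of these three pieces is then bounded by Cauchy--Schwarz: the volume term by $\|\mathbf{div}_h\underline{\bm{\tau}}_h\|_0\|\bm{v}\|_0$, the deviatoric term by $\|\underline{\bm{\tau}}_h^d\|_0\|\underline{\bm{\nabla}}\bm{v}\|_0$, and the jump term by $|\underline{\bm{\tau}}_h|_* \bigl(\sum_e h_e\|\{\bm{v}\}\|_e^2\bigr)^{1/2}$.

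**The main obstacle and its resolution.**

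The step I expect to be the crux is controlling the edge contribution $\bigl(\sum_{e\in\mathcal{E}_h^i} h_e\|\{\bm{v}\}\|_e^2\bigr)^{1/2}$ by $\|\bm{v}\|_1$. This requires a trace inequality of the form $h_e\|\bm{v}\|_e^2 \lesssim \|\bm{v}\|_{0,K}^2 + h_K^2|\bm{v}|_{1,K}^2$ on each element adjacent to $e$, which follows from the standard scaled trace theorem on the reference element combined with the quasi-regularity of $\{\mathcal{T}_h\}_h$; summing over edges and using shape-regularity (bounded overlap of element patches) yields $\sum_e h_e\|\{\bm{v}\}\|_e^2 \lesssim \|\bm{v}\|_0^2 + h^2|\bm{v}|_1^2 \lesssim \|\bm{v}\|_1^2$. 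Combining the three bounds gives $\|q_h\|_0^2 \lesssim \bigl(\|\underline{\bm{\tau}}_h^d\|_0 + \|\mathbf{div}_h\underline{\bm{\tau}}_h\|_0 + |\underline{\bm{\tau}}_h|_*\bigr)\|\bm{v}\|_1$, and dividing by $\|\bm{v}\|_1 \gtrsim \|q_h\|_0$ yields the bound on $\|\mathrm{tr}(\underline{\bm{\tau}}_h)\|_0$. Adding back $\|\underline{\bm{\tau}}_h^d\|_0^2$ via \eqref{sec2_eq2_1} completes the proof. Since the result is cited from \cite{Gatica2015Analysis}, I would simply reference that proof, but the duality-plus-trace-inequality route above is the self-contained argument I would supply if needed.
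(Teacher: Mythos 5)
Your argument is correct: the paper itself gives no proof of this lemma, merely citing it from Gatica et al., and your duality argument (choosing $\bm{v}\in\bm{H}^1_0(\Omega)$ with $\mathrm{div}\,\bm{v}=\mathrm{tr}(\underline{\bm{\tau}}_h)$, integrating by parts elementwise, and absorbing the interface terms via the scaled trace inequality and the jump seminorm) is exactly the standard route by which the cited result is established, being the broken-space analogue of the continuous proof of Proposition 9.1.1 in Boffi--Brezzi--Fortin. All the details check out, including the vanishing of the $\llbracket\bm{v}\rrbracket$ terms in the identity \eqref{sec3_eq0} for $\bm{v}\in\bm{H}^1_0(\Omega)$, so only the interior-edge jumps $[\underline{\bm{\tau}}_h]$ survive, matching the definition of $|\cdot|_*$.
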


\begin{myLemma}[cf. \cite{Douglas2002Unified}]\label{sec3_Lemma1}
There exists positive constants $C_1$ and $C_2$ such that
\begin{align}
\label{sec3_eq5}
  & \|\varphi\|_{e}^2\leq
  C_1(h_K^{-1}\|\varphi\|_{K}^2+h_K|\varphi|_{1,K}^2)\quad\forall\,\varphi\in
  H^1(K),\\
	\label{sec3_eq6}
  & |\varphi|_{1,K}^2\leq C_2h_K^{-2}\|\varphi\|_{K}^2
  ~\qquad\qquad\qquad\forall\,\varphi\in\mathcal{P}_{k+1}(K).
	\end{align}
\end{myLemma}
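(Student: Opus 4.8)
The plan is to prove both inequalities by the standard scaling argument, mapping each element $K$ to a fixed reference simplex $\hat{K}$ through an affine diffeomorphism $F_K:\hat{K}\to K$, $F_K(\hat{\bm{x}})=B_K\hat{\bm{x}}+\bm{b}_K$. Since $\{\mathcal{T}_h\}_h$ is quasi-regular, the Jacobian matrix satisfies $\|B_K\|\lesssim h_K$, $\|B_K^{-1}\|\lesssim h_K^{-1}$ and $|\det B_K|\sim h_K^n$ with constants depending only on the shape-regularity parameter. Writing $\hat{\varphi}=\varphi\circ F_K$, the elementary change-of-variables formulas then give, up to shape-regularity constants,
\begin{equation*}
\|\varphi\|_{K}^2 \sim h_K^{n}\|\hat{\varphi}\|_{\hat{K}}^2,\qquad
|\varphi|_{1,K}^2 \sim h_K^{n-2}|\hat{\varphi}|_{1,\hat{K}}^2,\qquad
\|\varphi\|_{e}^2 \sim h_K^{n-1}\|\hat{\varphi}\|_{\hat{e}}^2,
\end{equation*}
where $\hat{e}=F_K^{-1}(e)$ is the corresponding face of $\hat{K}$.

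For the trace inequality \eqref{sec3_eq5}, I would first invoke the trace theorem on the fixed reference element, i.e. the continuity of the trace operator $H^1(\hat{K})\to L^2(\partial\hat{K})$, which produces a constant $\hat{C}_1$ independent of $K$ and $h$ such that $\|\hat{\varphi}\|_{\hat{e}}^2\leq \hat{C}_1\bigl(\|\hat{\varphi}\|_{\hat{K}}^2+|\hat{\varphi}|_{1,\hat{K}}^2\bigr)$ for all $\hat{\varphi}\in H^1(\hat{K})$. Substituting the scaling relations above and absorbing the factor $h_K^{n-1}$ against $h_K^{n}$ and $h_K^{n-2}$ respectively converts the two reference terms into $h_K^{-1}\|\varphi\|_K^2$ and $h_K|\varphi|_{1,K}^2$, which is exactly \eqref{sec3_eq5}.

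For the inverse inequality \eqref{sec3_eq6} the argument is even shorter: on the reference element the polynomial space $\mathcal{P}_{k+1}(\hat{K})$ is finite dimensional, so all norms on it are equivalent, yielding a constant $\hat{C}_2$ with $|\hat{\varphi}|_{1,\hat{K}}^2\leq \hat{C}_2\|\hat{\varphi}\|_{\hat{K}}^2$ for all $\hat{\varphi}\in\mathcal{P}_{k+1}(\hat{K})$. Applying the scaling relations for the seminorm and the $L^2$-norm then gives $|\varphi|_{1,K}^2\lesssim h_K^{-2}\|\varphi\|_K^2$, which is \eqref{sec3_eq6}. Note that here the finite-dimensionality of the polynomial space — hence the dependence of $\hat{C}_2$ on $k$ — is essential, whereas \eqref{sec3_eq5} holds for the full space $H^1(K)$.

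Both proofs are entirely routine given shape regularity; the only point requiring genuine care, and the main obstacle if one insists on full rigor, is the uniform control of constants through the change of variables, namely verifying that $\hat{C}_1$, $\hat{C}_2$ and the equivalence constants in the scaling relations are all independent of $h$ and enter only through the quasi-regularity parameter. As these are classical estimates I would simply cite \cite{Douglas2002Unified} for the remaining details.
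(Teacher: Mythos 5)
Your proof is correct and is exactly the standard scaling argument that the paper implicitly relies on: the paper gives no proof of this lemma, simply citing \cite{Douglas2002Unified}, where these trace and inverse estimates are established by the same pull-back to a reference simplex, the trace theorem on $\hat{K}$, and norm equivalence on the finite-dimensional space $\mathcal{P}_{k+1}(\hat{K})$. Your scaling exponents ($h_K^{n}$, $h_K^{n-2}$, $h_K^{n-1}$) and the resulting powers of $h_K$ in both inequalities all check out under quasi-regularity, so there is nothing to add.
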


Then, we define
\begin{equation}\label{sec3_eq7}	
A_h((\underline{\bm{\sigma}}_h,\bm{u}_h),(\underline{\bm{\tau}}_h,\bm{v}_h))
= a_h(\underline{\bm{\sigma}}_h,\underline{\bm{\tau}}_h) +
b_h(\underline{\bm{\tau}}_h,\bm{u}_h)-b_h(\underline{\bm{\sigma}}_h,\bm{v}_h)+s(\bm{u}_h,\bm{v}_h),
\end{equation}
and
\[
F_h((\underline{\bm{\tau}}_h,\bm{v}_h))=(\bm{f},\bm{v}_h)+
\langle\underline{\bm{\tau}}_h\bm{n},\bm{g}\rangle_{\mathcal{E}_h^\partial}.
\]
Equivalently, the problem \eqref{weakBrinkman3} can be rewritten as
the following problem: Find $(\underline{\bm{\sigma}}_h,\bm{u}_h) \in
\underline{\bm{\Sigma}}_h\times\bm{V}_h$, such that
\begin{equation}\label{weakBrinkman4}	
A_h((\underline{\bm{\sigma}}_h,\bm{u}_h),(\underline{\bm{\tau}}_h,\bm{v}_h))=
F_h((\underline{\bm{\tau}}_h,\bm{v}_h)).
\end{equation}
In what follows, we prove the well-posedness of problem
\eqref{weakBrinkman4}.

\begin{myLemma}\label{sec3_Lemma2} The bilinear form $A_h(\cdot,\cdot)$ satisfies
\begin{equation}\label{sec3_eq8}
A_h((\underline{\bm{\sigma}}_h,\bm{u}_h),(\underline{\bm{\tau}}_h,\bm{v}_h))
\lesssim
(\|\underline{\bm{\sigma}}_h\|_{\underline{\bm{\Sigma}}_h}+\|\bm{u}_h\|_{\bm{V}})(\|\underline{\bm{\tau}}_h\|_{\underline{\bm{\Sigma}}_h}+\|\bm{v}_h\|_{\bm{V}})
\qquad \forall (\underline{\bm{\sigma}}_h,\bm{u}_h),
(\underline{\bm{\tau}}_h,\bm{v}_h) \in
\underline{\bm{\Sigma}}_h\times\bm{V}_h.
\end{equation}
\begin{proof} By the definition of norm \eqref{norm} and
Cauchy-Schwarz inequality, we have
\begin{align*}
A_h((\underline{\bm{\sigma}}_h,\bm{u}_h),(\underline{\bm{\tau}}_h,\bm{v}_h))
  &\leq
  \frac{1}{2}\|\underline{\bm{\sigma}}_h^d\|_0\|\underline{\bm{\tau}}_h^d\|_0+|\underline{\bm{\sigma}}_h|_*|\underline{\bm{\tau}}_h|_*
  +\|{\widetilde{\kappa}}^{\frac{1}{2}}\mathbf{div}_h\underline{\bm{\tau}}_h\|_0\|{\widetilde{\kappa}}^{-\frac{1}{2}}\bm{u}_h\|_0
  +\sum_{e \in
\mathcal{E}^i_h}\|h_e^{\frac{1}{2}}{\widetilde{\kappa}}^{-\frac{1}{2}}\{\bm{u}_h\}\|_e\|h_e^{-\frac{1}{2}}[\underline{\bm{\tau}}_h]\|_e\\
&\quad
+\|{\widetilde{\kappa}}^{\frac{1}{2}}\mathbf{div}_h\underline{\bm{\sigma}}_h\|_0\|{\widetilde{\kappa}}^{-\frac{1}{2}}\bm{v}_h\|_0+\sum_{e
  \in
\mathcal{E}^i_h}\|h_e^{\frac{1}{2}}{\widetilde{\kappa}}^{-\frac{1}{2}}\{\bm{v}_h\}\|_e\|h_e^{-\frac{1}{2}}[\underline{
\bm{\sigma}}_h]\|_e+\|\bm{u}_h\|_{\bm{V}}\|\bm{v}_h\|_{\bm{V}}\\
&\lesssim(\|\underline{\bm{\sigma}}_h\|_{\underline{\bm{\Sigma}}_h}+\|\bm{u}_h\|_{\bm{V}})
(\|\underline{\bm{\tau}}_h\|_{\underline{\bm{\Sigma}}_h}+\|\bm{v}_h\|_{\bm{V}}),
\end{align*}
where we use the trace and inverse inequalities and the fact that $\widetilde{\kappa}\leq 1$.
\end{proof}
\end{myLemma}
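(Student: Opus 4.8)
The plan is to expand $A_h$ according to its definition \eqref{sec3_eq7} into the four pieces $a_h(\underline{\bm{\sigma}}_h,\underline{\bm{\tau}}_h)$, $b_h(\underline{\bm{\tau}}_h,\bm{u}_h)$, $-b_h(\underline{\bm{\sigma}}_h,\bm{v}_h)$ and $s(\bm{u}_h,\bm{v}_h)$, and to bound each piece by a product in which one factor is controlled by $\|\underline{\bm{\sigma}}_h\|_{\underline{\bm{\Sigma}}_h}$ or $\|\bm{u}_h\|_{\bm{V}}$ and the other by $\|\underline{\bm{\tau}}_h\|_{\underline{\bm{\Sigma}}_h}$ or $\|\bm{v}_h\|_{\bm{V}}$. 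Once every term has this shape, the sum of the resulting cross products is dominated by $(\|\underline{\bm{\sigma}}_h\|_{\underline{\bm{\Sigma}}_h}+\|\bm{u}_h\|_{\bm{V}})(\|\underline{\bm{\tau}}_h\|_{\underline{\bm{\Sigma}}_h}+\|\bm{v}_h\|_{\bm{V}})$, which is exactly \eqref{sec3_eq8}. The only delicate bookkeeping is keeping the $\widetilde{\kappa}$-weights consistent with the norms \eqref{Cnorm-V} and \eqref{norm}.

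The bulk contributions are immediate. From the definition of $a_h$, Cauchy--Schwarz gives $\tfrac12(\underline{\bm{\sigma}}_h^d,\underline{\bm{\tau}}_h^d)\le\tfrac12\|\underline{\bm{\sigma}}_h^d\|_0\|\underline{\bm{\tau}}_h^d\|_0$, and after inserting the weights $h_e^{\pm1/2}$ edge by edge, $\int_{\mathcal{E}_h^i}h_e^{-1}[\underline{\bm{\sigma}}_h]\cdot[\underline{\bm{\tau}}_h]\,ds\le|\underline{\bm{\sigma}}_h|_*|\underline{\bm{\tau}}_h|_*$; both are absorbed into $\|\underline{\bm{\sigma}}_h\|_{\underline{\bm{\Sigma}}_h}\|\underline{\bm{\tau}}_h\|_{\underline{\bm{\Sigma}}_h}$. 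For the volume part of $b_h(\underline{\bm{\tau}}_h,\bm{u}_h)$ I would write $(\mathbf{div}_h\underline{\bm{\tau}}_h,\bm{u}_h)=(\widetilde{\kappa}^{1/2}\mathbf{div}_h\underline{\bm{\tau}}_h,\widetilde{\kappa}^{-1/2}\bm{u}_h)$ and apply Cauchy--Schwarz to obtain the bound $\|\underline{\bm{\tau}}_h\|_{\underline{\bm{\Sigma}}_h}\|\bm{u}_h\|_{\bm{V}}$; the volume part of $-b_h(\underline{\bm{\sigma}}_h,\bm{v}_h)$ is handled identically. For $s$, since $\widetilde{\kappa}=\min\{\kappa,1\}\le\kappa$ yields $\kappa^{-1}\le\widetilde{\kappa}^{-1}$, I get $s(\bm{u}_h,\bm{v}_h)\le\widetilde{\kappa}^{-1}\|\bm{u}_h\|_0\|\bm{v}_h\|_0=\|\bm{u}_h\|_{\bm{V}}\|\bm{v}_h\|_{\bm{V}}$.

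The step I expect to be the main obstacle is the interior-edge coupling terms $\int_{\mathcal{E}_h^i}[\underline{\bm{\tau}}_h]\cdot\{\bm{u}_h\}\,ds$ and $\int_{\mathcal{E}_h^i}[\underline{\bm{\sigma}}_h]\cdot\{\bm{v}_h\}\,ds$, since here an edge $L^2$-norm of the broken velocity must be returned to a volume norm. For the first one I would split each edge integral as $\langle h_e^{-1/2}[\underline{\bm{\tau}}_h],\,h_e^{1/2}\widetilde{\kappa}^{-1/2}\{\bm{u}_h\}\rangle_e$ (multiplying and dividing by $h_e^{1/2}\widetilde{\kappa}^{1/2}$), then apply Cauchy--Schwarz over $\mathcal{E}_h^i$ so that the jump factor produces $|\underline{\bm{\tau}}_h|_*$. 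The remaining factor $\sum_{e}h_e\widetilde{\kappa}^{-1}\|\{\bm{u}_h\}\|_e^2$ is controlled by the trace inequality \eqref{sec3_eq5} followed by the inverse inequality \eqref{sec3_eq6} (both from Lemma \ref{sec3_Lemma1}) to discard the $h_K|\bm{u}_h|_{1,K}^2$ term, giving $\sum_{e}h_e\|\{\bm{u}_h\}\|_e^2\lesssim\|\bm{u}_h\|_0^2$; combined with $\widetilde{\kappa}\le1$ this bounds the term by $\|\underline{\bm{\tau}}_h\|_{\underline{\bm{\Sigma}}_h}\|\bm{u}_h\|_{\bm{V}}$. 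The coupling term in $-b_h(\underline{\bm{\sigma}}_h,\bm{v}_h)$ is symmetric. Collecting the six estimates and applying the elementary bound $pr+ps+qr+qs=(p+q)(r+s)$ (with $p=\|\underline{\bm{\sigma}}_h\|_{\underline{\bm{\Sigma}}_h}$, $q=\|\bm{u}_h\|_{\bm{V}}$, and so on) then completes the proof.
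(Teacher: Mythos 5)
Your proposal is correct and follows essentially the same route as the paper's own proof: expand $A_h$ into its six contributions, apply Cauchy--Schwarz with the $h_e^{\pm 1/2}$ and $\widetilde{\kappa}^{\pm 1/2}$ weightings dictated by the norms \eqref{norm} and \eqref{Cnorm-V}, and control the interior-edge averages $\{\bm{u}_h\}$, $\{\bm{v}_h\}$ by volume $L^2$ norms via the trace and inverse inequalities of Lemma \ref{sec3_Lemma1} together with $\widetilde{\kappa}\leq 1$. The paper's displayed chain of inequalities is exactly the collection of six term-by-term bounds you describe, so there is nothing to add.
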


In order to prove the discrete inf-sup condition, we introduce the
space (cf. \cite{Wu2018Interior})
\begin{eqnarray*}
&&\underline{\bm{\Sigma}}_h^{\rm NC}=\{\underline{\bm{\tau}}_h \in
\underline{\bm{L}}^2(\Omega;\mathbb{S}): \underline{\bm{\tau}}_h \in
\underline{\bm{\mathcal{P}}}^{\mathbb{S}}_{k+1}(K)\ \ \forall\,K \in
\mathcal {T}_h,\ {\rm and }\ {\rm the}\ {\rm moments}\ {\rm of}\\
&&\qquad\qquad\quad \underline{\bm{\tau}}_h\bm{n} \ {\rm up}\ {\rm
  to}\ {\rm degree} \ k\ {\rm are} \ {\rm continuous}\ {\rm across}\
{\rm the}\ {\rm interior}\ {\rm edges}\},\\
&&\underline{\bm{\mathring{\Sigma}}}_h^{\rm
  NC}=\{\underline{\bm{\tau}}_h: \underline{\bm{\tau}}_h \in
  \underline{\bm{\Sigma}}_h^{\rm NC},
  \int_{\Omega}\mathrm{tr}(\underline{\bm{\tau}}_h)d\bm{x}=0\}.
\end{eqnarray*}
For any $\bm{u}_h \in \bm{V}_h$, there exists a constant $C_3>0$ and
$\underline{\bm{\sigma}}_h^* \in \underline{\bm{\Sigma}}_h^{\rm NC}$
(cf. \cite{Wu2018Interior}) such that
\begin{eqnarray}\label{sec3_eq9}
\mathbf{div}_h\underline{\bm{\sigma}}_h^*= \bm{u}_h \quad {\rm
and}\quad
\|\underline{\bm{\sigma}}_h^*\|_{0}^2+\|\mathbf{div}_h\underline{\bm{\sigma}}_h^*\|_{0}^2+|\underline{\bm{\sigma}}_h^*|_*^2\leq C_3\|\bm{u}_h\|_{0}^2.
\end{eqnarray}
We are now in the position to show the inf-sup condition of
$A_h(\cdot, \cdot)$.

\begin{myLemma}\label{sec3_Lemma3} 
For any
$(\underline{\bm{\sigma}}_h,\bm{u}_h),(\underline{\bm{\tau}}_h,\bm{v}_h)
\in \underline{\bm{\Sigma}}_h\times\bm{V}_h$, it holds
\begin{equation}\label{sec3_eq10}
\inf_{(\underline{\bm{\sigma}}_h,\bm{u}_h)\in \underline{\bm{\Sigma}}_h\times\bm{V}_h}\sup_{(\underline{\bm{\tau}}_h,\bm{v}_h)\in \underline{\bm{\Sigma}}_h\times\bm{V}_h}\frac{A_h((\underline{\bm{\sigma}}_h,\bm{u}_h),(\underline{\bm{\tau}}_h,\bm{v}_h))}{
(\|\underline{\bm{\sigma}}_h\|_{\underline{\bm{\Sigma}}_h}+\|\bm{u}_h\|_{\bm{V}})(\|\underline{\bm{\tau}}_h\|_{\underline{\bm{\Sigma}}_h}+\|\bm{v}_h\|_{\bm{V}}) }\gtrsim 1.
\end{equation}
\end{myLemma}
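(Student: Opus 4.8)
The plan is to prove the discrete inf-sup bound \eqref{sec3_eq10} by the same constructive device used at the continuous level in Lemma \ref{sec2_Lemma2}: for a fixed pair $(\underline{\bm{\sigma}}_h,\bm{u}_h)$ I would exhibit an explicit test pair $(\underline{\bm{\tau}}_h,\bm{v}_h)$ that realizes the supremum up to a $\kappa$- and $h$-independent constant. Guided by the continuous choice, I take
\[
\underline{\bm{\tau}}_h = \underline{\bm{\sigma}}_h + \alpha\,\underline{\bm{\sigma}}_h^*,
\qquad
\bm{v}_h = \delta_1\bm{u}_h - \delta_2\,\mathbf{div}_h\underline{\bm{\sigma}}_h,
\]
where $\underline{\bm{\sigma}}_h^*\in\underline{\bm{\Sigma}}_h^{\rm NC}$ is the auxiliary field of \eqref{sec3_eq9} satisfying $\mathbf{div}_h\underline{\bm{\sigma}}_h^* = \bm{u}_h$. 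Since $\underline{\bm{\sigma}}_h$ is piecewise of degree $k+1$, its broken divergence lies in $\bm{V}_h$, so $\bm{v}_h$ is admissible. I set $\delta_1 = 1 - c\,\widetilde{\kappa}/\kappa$ and $\delta_2 = c\,\widetilde{\kappa}$ with $c\in(0,1)$ a \emph{small absolute} constant fixed at the end; this enforces $\delta_1 + \kappa^{-1}\delta_2 = 1$ and $\delta_1\in(1-c,1)$.

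The decisive structural point is the role of the nonconforming space. Because the normal moments of $\underline{\bm{\sigma}}_h^*$ up to degree $k$ are continuous across interior edges, the jump $[\underline{\bm{\sigma}}_h^*]$ is $L^2(e)$-orthogonal to $\bm{\mathcal{P}}_k(e)$, while $\{\bm{u}_h\}|_e\in\bm{\mathcal{P}}_k(e)$; hence the edge integral in $b_h(\underline{\bm{\sigma}}_h^*,\bm{u}_h)$ drops and $b_h(\underline{\bm{\sigma}}_h^*,\bm{u}_h) = (\mathbf{div}_h\underline{\bm{\sigma}}_h^*,\bm{u}_h) = \|\bm{u}_h\|_0^2$. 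Expanding $A_h$ with the chosen test pair, using $b_h(\underline{\bm{\sigma}}_h,\mathbf{div}_h\underline{\bm{\sigma}}_h) = \|\mathbf{div}_h\underline{\bm{\sigma}}_h\|_0^2 - \int_{\mathcal{E}_h^i}[\underline{\bm{\sigma}}_h]\cdot\{\mathbf{div}_h\underline{\bm{\sigma}}_h\}ds$, the coefficient of the cross term $(\mathbf{div}_h\underline{\bm{\sigma}}_h,\bm{u}_h)$ is exactly $1-\delta_1-\kappa^{-1}\delta_2 = 0$ and it cancels as in the continuous case. There remains
\[
A_h = \tfrac12\|\underline{\bm{\sigma}}_h^d\|_0^2 + |\underline{\bm{\sigma}}_h|_*^2 + \delta_2\|\mathbf{div}_h\underline{\bm{\sigma}}_h\|_0^2 + (\alpha + \delta_1\kappa^{-1})\|\bm{u}_h\|_0^2 + R,
\]
where $R$ gathers the three genuinely discrete cross terms $\alpha\,a_h(\underline{\bm{\sigma}}_h,\underline{\bm{\sigma}}_h^*)$, $-(1-\delta_1)\int_{\mathcal{E}_h^i}[\underline{\bm{\sigma}}_h]\cdot\{\bm{u}_h\}ds$ and $-\delta_2\int_{\mathcal{E}_h^i}[\underline{\bm{\sigma}}_h]\cdot\{\mathbf{div}_h\underline{\bm{\sigma}}_h\}ds$, each to be bounded in absolute value.

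To close I estimate $|R|$ term by term via Cauchy--Schwarz and Young, controlling boundary norms by the trace and inverse inequalities of Lemma \ref{sec3_Lemma1} (so $\sum_e h_e\|\{\bm{w}_h\}\|_e^2\lesssim\|\bm{w}_h\|_0^2$ for piecewise polynomials) and by \eqref{sec3_eq9}. The $a_h$-term yields $\tfrac18\|\underline{\bm{\sigma}}_h^d\|_0^2 + \tfrac18|\underline{\bm{\sigma}}_h|_*^2 + C\alpha^2\|\bm{u}_h\|_0^2$; the first edge term yields $\tfrac14|\underline{\bm{\sigma}}_h|_*^2 + C(1-\delta_1)^2\|\bm{u}_h\|_0^2$; the second yields $\tfrac14|\underline{\bm{\sigma}}_h|_*^2 + C\delta_2^2\|\mathbf{div}_h\underline{\bm{\sigma}}_h\|_0^2$. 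Absorbing these leaves coefficients $\tfrac38$, $\tfrac38$, $\delta_2(1-C\delta_2)$ and $\alpha + \delta_1\kappa^{-1} - C\alpha^2 - C(1-\delta_1)^2$ on $\|\underline{\bm{\sigma}}_h^d\|_0^2$, $|\underline{\bm{\sigma}}_h|_*^2$, $\|\mathbf{div}_h\underline{\bm{\sigma}}_h\|_0^2$ and $\|\bm{u}_h\|_0^2$ respectively. Since $\delta_2 = c\widetilde{\kappa}$ with $\widetilde{\kappa}\le1$, choosing first $\alpha$ and then $c$ small (depending only on the constants in Lemma \ref{sec3_Lemma1} and \eqref{sec3_eq9}, hence $\kappa$- and $h$-independent) gives $\delta_2(1-C\delta_2)\ge\tfrac{c}{2}\widetilde{\kappa}\gtrsim\widetilde{\kappa}$, and, using $\widetilde{\kappa}\le\kappa$,
\[
\alpha + \delta_1\kappa^{-1} - C\alpha^2 - C(1-\delta_1)^2 \ge \tfrac{\alpha}{2} + \tfrac12\kappa^{-1} \gtrsim \widetilde{\kappa}^{-1}
\]
in both the Stokes-dominated ($\alpha$ dominates) and Darcy-dominated ($\kappa^{-1}$ dominates) regimes. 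Hence $A_h\gtrsim\|\underline{\bm{\sigma}}_h\|_{\underline{\bm{\Sigma}}_h}^2 + \|\bm{u}_h\|_{\bm{V}}^2$. Finally, a direct computation using \eqref{sec3_eq9}, $\delta_1\le1$, $\delta_2 = c\widetilde{\kappa}$ and $\widetilde{\kappa}\le1$ shows $\|\underline{\bm{\tau}}_h\|_{\underline{\bm{\Sigma}}_h} + \|\bm{v}_h\|_{\bm{V}}\lesssim\|\underline{\bm{\sigma}}_h\|_{\underline{\bm{\Sigma}}_h} + \|\bm{u}_h\|_{\bm{V}}$, and dividing yields \eqref{sec3_eq10}.

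The main obstacle I anticipate is the robust absorption of the two jump--divergence cross terms in $R$ when $\kappa\ge1$: there $\widetilde{\kappa}=1$ so $\delta_2 = c$ is no longer made small by the parameter, and positivity of $\delta_2(1-C\delta_2)$ rests entirely on choosing the absolute constant $c$ small rather than the value $c=\tfrac12$ inherited from the continuous proof. Ordering the quantifiers correctly --- fix $\alpha$ small to tame $a_h(\underline{\bm{\sigma}}_h,\underline{\bm{\sigma}}_h^*)$, then fix $c$ small to tame the jumps, both independently of $\kappa$ --- is the delicate bookkeeping on which the parameter-robustness of the inf-sup constant depends.
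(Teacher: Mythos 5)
Your construction is essentially the paper's: the same test pair $\bm{v}_h=\delta_1\bm{u}_h-\delta_2\mathbf{div}_h\underline{\bm{\sigma}}_h$ with $\delta_1+\kappa^{-1}\delta_2=1$, the same use of the nonconforming lifting to kill the edge term in $b_h(\cdot,\bm{u}_h)$, the same Young-inequality absorption of the three discrete cross terms, and the same two-regime verification of the coefficient of $\|\bm{u}_h\|_0^2$. The only substantive deviation is also the one genuine gap: you take $\underline{\bm{\tau}}_h=\underline{\bm{\sigma}}_h+\alpha\,\underline{\bm{\sigma}}_h^*$ with $\underline{\bm{\sigma}}_h^*\in\underline{\bm{\Sigma}}_h^{\rm NC}$, but the test space $\underline{\bm{\Sigma}}_h$ carries the constraint $\int_\Omega{\rm tr}(\underline{\bm{\tau}}_h)\,d\bm{x}=0$, and nothing in \eqref{sec3_eq9} guarantees that ${\rm tr}(\underline{\bm{\sigma}}_h^*)$ has zero mean. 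As written, your $\underline{\bm{\tau}}_h$ need not be admissible in the supremum, so the chain of inequalities does not yet yield \eqref{sec3_eq10}.

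The repair is exactly the paper's first step: replace $\underline{\bm{\sigma}}_h^*$ by $\widetilde{\underline{\bm{\sigma}}}_h=\underline{\bm{\sigma}}_h^*-\frac{\gamma}{n}\underline{\bm{I}}$ with $\gamma=\frac{1}{|\Omega|}\int_\Omega{\rm tr}(\underline{\bm{\sigma}}_h^*)\,d\bm{x}$. Since $\underline{\bm{I}}^d=0$, the shift is constant and traceless after deviatoric projection, it changes neither $\mathbf{div}_h$ nor the interelement jumps, and it does not increase the $L^2$ norm, so \eqref{sec3_eq11} holds with the same constant $C_3$ and every estimate in your argument goes through verbatim with $\widetilde{\underline{\bm{\sigma}}}_h$ in place of $\underline{\bm{\sigma}}_h^*$. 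One further bookkeeping remark: in bounding $-(1-\delta_1)\int_{\mathcal{E}_h^i}[\underline{\bm{\sigma}}_h]\cdot\{\bm{u}_h\}\,ds$ you wrote the residual contribution as $C(1-\delta_1)^2\|\bm{u}_h\|_0^2$ without the $\widetilde{\kappa}^{-1}$ factor the paper carries; this is harmless (the paper simply majorizes $1\le\widetilde{\kappa}^{-1}$ to streamline the final comparison), and your quantifier ordering --- fix $\alpha$ first, then the small absolute constant $c$ in $\delta_2=c\widetilde{\kappa}$, both independently of $\kappa$ and $h$ --- correctly reproduces the parameter-robustness that the paper achieves with the explicit choice $\delta_2=\widetilde{\kappa}/\max\{2,3C_1(1+C_2),2C_3\}$, $\alpha=1/(2C_3)$.
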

\begin{proof} Set $\gamma = \frac{1}{|\Omega|}\int_{\Omega}
\mathrm{tr}(\underline{\bm{\sigma}}_h^*)d\bm{x}$ and
$\widetilde{\underline{\bm{\sigma}}}_h=\underline{\bm{\sigma}}_h^*-\frac{\gamma}{n}\underline{\bm{I}}$.
Then, it is straightforward to show that 
	\begin{equation}\label{sec3_eq11}
\widetilde{\underline{\bm{\sigma}}}_h \in
\underline{\bm{\mathring{\Sigma}}}_h^{\rm NC}, \quad
\mathbf{div}_h\widetilde{\underline{\bm{\sigma}}}_h = \bm{u}_h \quad
{\rm in} \,\Omega, \quad {\rm and}\quad
\|\widetilde{\underline{\bm{\sigma}}}_h\|_{0}^2+\|\mathbf{div}_h\widetilde{\underline{\bm{\sigma}}}_h\|_{0}^2+|\widetilde{\underline{\bm{\sigma}}}_h|_*^2\leq
C_3\|\bm{u}_h\|_{0}^2.
\end{equation}
We take $\underline{\bm{\tau}}_h = \underline{\bm{\sigma}}_h +
\alpha\widetilde{\underline{\bm{\sigma}}}_h$ and $\bm{v}_h =
\delta_1\bm{u}_h-\delta_2\mathbf{div}_h\underline{\bm{\sigma}}_h$,
where the non-negative undetermined coefficients $\alpha$, $\delta_1$
and $\delta_2$ will be specified in the following analysis. Thanks to
\eqref{sec3_eq11} and
$\int_{\mathcal{E}_h^i}[\widetilde{\underline{\bm{\sigma}}}_h]
\cdot\{\bm{u}_h\}ds = 0$ obtained from the property of
$\underline{\bm{\Sigma}}_h^{\rm NC}$, one gets
\begin{align*}
A_h((\underline{\bm{\sigma}}_h,\bm{u}_h),(\underline{\bm{\tau}}_h,\bm{v}_h))
&=\frac{1}{2}(\underline{\bm{\sigma}}_h^d,\underline{\bm{\sigma}}_h^d
+ \alpha\widetilde{\underline{\bm{\sigma}}}_h^d) +
\int_{\mathcal{E}_h^i}\frac{1}{h_e}[\underline{\bm{\sigma}}_h]\cdot[\underline{\bm{\sigma}}_h
+ \alpha\widetilde{\underline{\bm{\sigma}}}_h]ds +
(\mathbf{div}_h(\underline{\bm{\sigma}}_h +
\alpha\widetilde{\underline{\bm{\sigma}}}_h),\bm{u}_h) \\
& \quad - \int_{\mathcal{E}_h^i}[\underline{\bm{\sigma}}_h +
\alpha\widetilde{\underline{\bm{\sigma}}}_h]\cdot\{\bm{u}_h\}ds -
(\mathbf{div}_h\underline{\bm{\sigma}}_h,\delta_1\bm{u}_h -
 \delta_2\mathbf{div}_h\underline{\bm{\sigma}}_h)  \\
& \quad +
\int_{\mathcal{E}_h^i}[\underline{\bm{\sigma}}_h]\cdot\{\delta_1\bm{u}_h
- \delta_2\mathbf{div}_h\underline{\bm{\sigma}}_h\}ds +
(\kappa^{-1}\bm{u}_h,\delta_1\bm{u}_h-
 \delta_2\mathbf{div}_h\underline{\bm{\sigma}}_h)
\\ 
&\geq\frac{1}{2}\|\underline{\bm{\sigma}}_h^d\|^2+\frac{\alpha}{2}(\underline{\bm{\sigma}}_h^d,\widetilde{\underline{\bm{\sigma}}}_h)
  +|\underline{\bm{\sigma}}_h|_*^2+\alpha\int_{\mathcal{E}_h^i}\frac{1}{h_e}[\underline{\bm{\sigma}}_h]\cdot[\widetilde{\underline{\bm{\sigma}}}_h]ds
  +(1-\delta_1-\delta_2\kappa^{-1})(\mathbf{div}\underline{\bm{\sigma}}_h,\bm{u}_h)\\
& \quad
+\delta_2\|\mathbf{div}_h\underline{\bm{\sigma}}_h\|_0^2+(\delta_1-1)\int_{\mathcal{E}_h^i}[\underline{\bm{\sigma}}_h]\cdot\{\bm{u}_h\}ds
-\delta_2\int_{\mathcal{E}_h^i}[\underline{\bm{\sigma}}_h]\cdot\{\mathbf{div}_h\underline{\bm{\sigma}}_h\}ds
+(\alpha+\kappa^{-1}\delta_1)\|\bm{u}_h\|_0^2.
\end{align*}
According to Cauchy-Schwarz inequality, \eqref{sec3_eq11}, the trace
and inverse inequalities, we obtain
\begin{align*}
\frac{\alpha}{2}(\underline{\bm{\sigma}}_h^d,\widetilde{\underline{\bm{\sigma}}}_h)&\leq\frac{\varepsilon}{4}\|\underline{\bm{\sigma}}_h^d\|_0^2
+\frac{\alpha^2C_3}{4\varepsilon}\|\bm{u}_h\|_0^2,\\
\alpha\int_{\mathcal{E}_h^i}\frac{1}{h_e}[\underline{\bm{\sigma}}_h]\cdot[\widetilde{\underline{\bm{\sigma}}}_h]ds
&\leq\frac{1}{3}|\underline{\bm{\sigma}}_h|_*^2+\frac{3\alpha^2C_3}{4}\|\bm{u}_h\|_0^2,\\
-\delta_2\int_{\mathcal{E}_h^i}[\underline{\bm{\sigma}}_h]\cdot\{\mathbf{div}_h\underline{\bm{\sigma}}_h\}ds
&\leq\frac{\delta_2}{2}\|\mathbf{div}_h\underline{\bm{\sigma}}_h\|_0^2+\frac{\delta_2C_1(1+C_2)\widetilde{\kappa}^{-1}}{2}|\underline{\bm{\sigma}}_h|_*^2,\\
(\delta_1-1)\int_{\mathcal{E}_h^i}[\underline{\bm{\sigma}}_h]\cdot\{\bm{u}_h\}ds
&\leq\frac{1}{3}|\underline{\bm{\sigma}}_h|_*^2+\frac{3(\delta_1-1)^2C_1(1+C_2)\widetilde{\kappa}^{-1}}{4}\|\bm{u}_h\|_0^2.
\end{align*}
Combining with above inequalities, we have
\begin{eqnarray}\label{sec3_eq12}
A_h((\underline{\bm{\sigma}}_h,\bm{u}_h),(\underline{\bm{\tau}}_h,\bm{v}_h))\geq(\frac{1}{2}-\frac{\varepsilon}{4})\|\underline{\bm{\sigma}}_h^d\|^2
+(\frac{1}{3}-\frac{\delta_2C_1(1+C_2)\widetilde{\kappa}^{-1}}{2})|\underline{\bm{\sigma}}_h|_*^2
+(1-\delta_1-\delta_2\kappa^{-1})(\mathbf{div}\underline{\bm{\sigma}}_h,\bm{u}_h)\nonumber\\
\qquad\qquad\qquad
+\frac{\delta_2}{2}\|\mathbf{div}_h\underline{\bm{\sigma}}_h\|_0^2
+\left(\alpha+\kappa^{-1}\delta_1-\frac{\alpha^2C_3}{4\varepsilon}-\frac{3\alpha^2C_3}{4}-\frac{3(\delta_1-1)^2C_1(1+C_2)\widetilde{\kappa}^{-1}}{4}\right)\|\bm{u}_h\|_0^2.
\end{eqnarray}

Now, we need to choose appropriate parameters $\delta_1$, $\delta_2$, $\varepsilon$ and $\alpha$, such that
\begin{align*}
&\frac{1}{2}-\frac{\varepsilon}{4}\gtrsim 1,\qquad 1-
\delta_1-\kappa^{-1}\delta_2=0, \qquad \delta_2\gtrsim\widetilde{\kappa},\\
&\frac{1}{3}-\frac{\delta_2C_1(1+C_2)\widetilde{\kappa}^{-1}}{2}\gtrsim 1,\\
&\alpha+\kappa^{-1}\delta_1-\frac{\alpha^2C_3}{4\varepsilon} 
-\frac{3\alpha^2C_3}{4} 
-\frac{3(\delta_1-1)^2C_1(1+C_2)\widetilde{\kappa}^{-1}}{4}\gtrsim
\widetilde{\kappa}^{-1}.
\end{align*}
We could take $\delta_2 =
\frac{\widetilde{\kappa}}{\max\{2,3C_1(1+C_2),2C_3\}}$, $\delta_1 = 1-
\frac{\delta_2}{\kappa}$, $\varepsilon = 1$ and
$\alpha=\frac{1}{2C_3}$, by which the first three requirements
above meet easily. Furthermore, we have 
$$ 
\frac{1}{3}-\frac{\delta_2C_1(1+C_2)\widetilde{\kappa}^{-1}}{2} =
\frac{1}{3}-\frac{C_1(1+C_2)}{2\max\{2,3C_1(1+C_2),2C_3\}}\geq
\frac{1}{6} \gtrsim 1.
$$ 
Using $\frac{1}{2} \leq \delta_1 < 1$ and $\widetilde{\kappa}^{-1} =
\max\{1, \kappa^{-1}\}$, we show the last inequality into
two cases: 
\begin{align*}
\text{If }\kappa > 1: 
&\quad 
\alpha-\frac{\alpha^2C_3}{4\varepsilon}-\frac{3\alpha^2C_3}{4} 
-\frac{3(\delta_1-1)^2C_1(1+C_2)\widetilde{\kappa}^{-1}}{4}
= \frac{1}{4C_3} -
\frac{3C_1(1+C_2)}{4\kappa^2\max\{2,3C_1(1+C_2),2C_3\}^2} \gtrsim 1; \\
\text{If }\kappa\leq1:
& \quad 
\kappa^{-1}\delta_1 - 
\frac{3(\delta_1-1)^2C_1(1+C_2)\widetilde{\kappa}^{-1}}{4} =
\kappa^{-1}\left( 
\delta_1 - \frac{3C_1(1+C_2)}{4\max\{2,3C_1(1+C_2),2C_3\}^2} 
\right) \geq \frac{3}{8}\kappa^{-1}\gtrsim
\kappa^{-1}.
\end{align*}
From the above, we obtain
\[
A_h((\underline{\bm{\sigma}}_h,\bm{u}_h),(\underline{\bm{\tau}}_h,\bm{v}_h))
  \gtrsim(\|\underline{\bm{\sigma}}_h\|_{\underline{\bm{\Sigma}}_h}+\|\bm{u}_h\|_{\bm{V}})^2.
\]

Next, taking $\alpha$, $\delta_1$ and $\delta_2$ in
$\underline{\bm{\tau}}_h$ and $\bm{v}_h$, due to \eqref{sec3_eq11},
the fact that $\delta_1<1$ and
$\widetilde{\kappa}\leq1\leq\widetilde{\kappa}^{-1}$ , it holds that 
\begin{align*}
\|\underline{\bm{\tau}}_h\|_{\underline{\bm{\Sigma}}_h}^2
&=\|\underline{\bm{\sigma}}_h^d + \alpha\widetilde{\underline{\bm{\sigma}}}_h^d\|_0^2+\widetilde{\kappa}\|\mathbf{div}_h(\underline{\bm{\sigma}}_h + \alpha\widetilde{\underline{\bm{\sigma}}}_h)\|^2_{0}
+|\underline{\bm{\sigma}}_h + \alpha\widetilde{\underline{\bm{\sigma}}}_h|_*^2\\
&\lesssim\|\underline{\bm{\sigma}}_h\|_{\underline{\bm{\Sigma}}_h}^2+\|\widetilde{\underline{\bm{\sigma}}}_h\|_0^2
+\widetilde{\kappa}\|\mathbf{div}_h\widetilde{\underline{\bm{\sigma}}}_h\|^2_{0}
+|\widetilde{\underline{\bm{\sigma}}}_h|_*^2\\
&\lesssim\|\underline{\bm{\sigma}}_h\|_{\underline{\bm{\Sigma}}_h}^2+\|\bm{u}_h\|_{\bm{V}}^2,\\
\|\bm{v}_h\|_{\bm{V}}^2
&=\widetilde{\kappa}^{-1}\|\delta_1\bm{u}_h- \delta_2\mathbf{div}_h\underline{\bm{\sigma}}_h\|_0^2\lesssim \|\bm{u}_h\|_{\bm{V}}^2+\widetilde{\kappa}^{-1}\|\widetilde{\kappa}\mathbf{div}_h\underline{\bm{\sigma}}_h\|_0^2\\
&\lesssim\|\underline{\bm{\sigma}}_h\|_{\underline{\bm{\Sigma}}_h}^2+\|\bm{u}_h\|_{\bm{V}}^2.
\end{align*}
Then, we finish this proof.	
\end{proof}

From Lemma \ref{sec3_Lemma2} and Lemma \ref{sec3_Lemma3}, the
well-posedness of the problem \eqref{weakBrinkman4} can be obtained.

\begin{myTheorem}\label{sec3_Lemma4} The mixed DG scheme
\eqref{weakBrinkman4} has a unique solution
$(\underline{\bm{\sigma}}_h,\bm{u}_h) \in
\underline{\bm{\Sigma}}_h\times\bm{V}_h$.
\end{myTheorem}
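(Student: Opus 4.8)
The plan is to exploit the finite-dimensional character of the problem, reducing well-posedness to a single injectivity statement that follows directly from the stability bounds already established. Since $\underline{\bm{\Sigma}}_h\times\bm{V}_h$ is finite-dimensional and the bilinear form $A_h(\cdot,\cdot)$ acts on the \emph{same} space for both trial and test arguments, the discrete problem \eqref{weakBrinkman4} amounts to a square linear system. For such a system, existence of a solution for every right-hand side is equivalent to uniqueness, so it suffices to show that the homogeneous problem admits only the trivial solution.

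First I would combine the boundedness from Lemma \ref{sec3_Lemma2} with the inf-sup condition from Lemma \ref{sec3_Lemma3} to deduce injectivity of the operator associated with $A_h$. Concretely, suppose $(\underline{\bm{\sigma}}_h,\bm{u}_h)\in\underline{\bm{\Sigma}}_h\times\bm{V}_h$ satisfies $A_h((\underline{\bm{\sigma}}_h,\bm{u}_h),(\underline{\bm{\tau}}_h,\bm{v}_h))=0$ for all $(\underline{\bm{\tau}}_h,\bm{v}_h)\in\underline{\bm{\Sigma}}_h\times\bm{V}_h$. Then the supremum over all test pairs vanishes, and applying \eqref{sec3_eq10} forces $\|\underline{\bm{\sigma}}_h\|_{\underline{\bm{\Sigma}}_h}+\|\bm{u}_h\|_{\bm{V}}\lesssim 0$, whence $(\underline{\bm{\sigma}}_h,\bm{u}_h)=(\underline{\bm{0}},\bm{0})$. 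With injectivity in hand, the rank-nullity theorem applied to a square linear map on the finite-dimensional space $\underline{\bm{\Sigma}}_h\times\bm{V}_h$ upgrades injectivity to bijectivity; equivalently, the associated matrix is nonsingular. Therefore, for the given functional $F_h$ there exists a unique pair $(\underline{\bm{\sigma}}_h,\bm{u}_h)$ solving \eqref{weakBrinkman4}.

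Since all the substantive analytic content—the coercivity-type control and the $\kappa$-uniform stability—has already been carried out in Lemmas \ref{sec3_Lemma2} and \ref{sec3_Lemma3}, this final step is purely algebraic and presents no genuine obstacle. The only point worth emphasizing is that the inf-sup bound \eqref{sec3_eq10} was established over \emph{identical} trial and test spaces, which is precisely what makes the finite-dimensional ``injective implies surjective'' argument applicable without a separate surjectivity verification; had the two spaces differed in dimension, one would additionally need a nondegeneracy condition on the test side.
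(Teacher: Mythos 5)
Your proposal is correct and is essentially the argument the paper intends: the paper simply cites Lemma \ref{sec3_Lemma2} and Lemma \ref{sec3_Lemma3} and concludes well-posedness, leaving implicit exactly the finite-dimensional ``inf-sup $\Rightarrow$ injectivity $\Rightarrow$ bijectivity'' step that you spell out. No gaps; your remark that identical trial and test spaces are what make the square-system argument work is a fair and accurate observation.
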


\begin{myRemark}\label{sec3_Remark2}
For the bilinear form $A_h(\bm{u}_h,\bm{v}_h)$ of
\eqref{weakBrinkman4}, when the penalty term
$\int_{\mathcal{E}_h^i}\frac{1}{h_e}[\underline{\bm{\sigma}}_h] \cdot
[\underline{\bm{\tau}}_h]ds$ is replaced by $\sum_{e \in
\mathcal{E}_h^i}\int_{\Omega}r_e([\underline{\bm{\sigma}}_h])\cdot
r_e([\underline{\bm{\tau}}_h])d\bm{x}$, we can obtain another mixed DG
scheme, which is the dual form of the method of Brezzi et al. 
\cite{Bassi1997A}. Here, $r_e: (L^2({\cal E}_h))^d \rightarrow
\bm{V}_h$ is the lifting operator (cf. \cite{Bassi1997A,
Douglas2002Unified, Wang201sMixed}).  The well-posedness of the
corresponding scheme can be proved similarly.
\end{myRemark}
		

\section{Error estimates}\label{errorestimation}

In this section, we aim to derive the error estimates for the MDG
scheme \eqref{weakBrinkman3}. First, we show the consistency of the
MDG scheme, which naturally leads to an error estimate by the inf-sup
condition.

\subsection{Error estimate in energy norm for the pseudostress and
velocity}\label{energy_norm}

\begin{myLemma}\label{sec4_Lemma1}
Let the solution $(\underline{\bm{\sigma}},\bm{u}) \in
\underline{\bm{\Sigma}}\times\bm{H}^1(\Omega)$, then
\begin{align}\label{sec4_eq3}
A_h((\underline{\bm{\sigma}}-\underline{\bm{\sigma}}_h,\bm{u}-\bm{u}_h),(\underline{\bm{\tau}}_h,\bm{v}_h))=0
\qquad &\forall (\underline{\bm{\tau}}_h,\bm{v}_h) \in
\underline{\bm{\Sigma}}_h\times\bm{V}_h.
\end{align}
\end{myLemma}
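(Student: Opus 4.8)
The plan is to establish the Galerkin orthogonality \eqref{sec4_eq3} by first proving that the exact solution itself satisfies the discrete variational identity, namely $A_h((\underline{\bm{\sigma}},\bm{u}),(\underline{\bm{\tau}}_h,\bm{v}_h)) = F_h((\underline{\bm{\tau}}_h,\bm{v}_h))$ for every $(\underline{\bm{\tau}}_h,\bm{v}_h)\in\underline{\bm{\Sigma}}_h\times\bm{V}_h$, and then subtracting the discrete scheme \eqref{sec3_eq3}--\eqref{sec3_eq4}. Since $A_h(\cdot,\cdot)$ defined in \eqref{sec3_eq7} is bilinear, \eqref{sec4_eq3} follows at once by linearity once this consistency is in hand. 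The whole argument therefore reduces to verifying the consistency identity, and the regularity $\bm{u}\in\bm{H}^1(\Omega)$ together with the membership $\underline{\bm{\sigma}}\in\underline{\bm{H}}(\bm{{\rm div}};\mathbb{S})$ are exactly what make it work.

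I would treat the two test directions separately. For the $\bm{v}_h$-component, since $\underline{\bm{\sigma}}\in\underline{\bm{H}}(\bm{{\rm div}};\mathbb{S})$ has a continuous normal trace across interior edges, we have $[\underline{\bm{\sigma}}]=0$ on $\mathcal{E}_h^i$, so $b_h(\underline{\bm{\sigma}},\bm{v}_h)=(\mathbf{div}\underline{\bm{\sigma}},\bm{v}_h)$ with no edge contribution. Combining $-b_h(\underline{\bm{\sigma}},\bm{v}_h)+s(\bm{u},\bm{v}_h)=-(\mathbf{div}\underline{\bm{\sigma}},\bm{v}_h)+(\kappa^{-1}\bm{u},\bm{v}_h)$ with the momentum equation \eqref{Brinkman1_eq2} reproduces exactly $(\bm{f},\bm{v}_h)$, matching the first part of $F_h$.

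The $\underline{\bm{\tau}}_h$-component is the substance of the proof. Using the constitutive law \eqref{Brinkman1_eq1} and incompressibility (so that $\underline{\bm{\sigma}}^d=2\underline{\bm{\varepsilon}}(\bm{u})$ is already trace-free), I would rewrite $\tfrac12(\underline{\bm{\sigma}}^d,\underline{\bm{\tau}}_h^d)=(\underline{\bm{\varepsilon}}(\bm{u}),\underline{\bm{\tau}}_h)$, since contracting a trace-free tensor against $\underline{\bm{\tau}}_h$ is unaffected by taking the deviatoric part of $\underline{\bm{\tau}}_h$. Applying Green's formula \eqref{Green-formula} on each $K$ and summing gives $(\underline{\bm{\varepsilon}}(\bm{u}),\underline{\bm{\tau}}_h)=-(\mathbf{div}_h\underline{\bm{\tau}}_h,\bm{u})+\sum_{K}\langle\underline{\bm{\tau}}_h\bm{n}_K,\bm{u}\rangle_{\partial K}$, and the interface identity \eqref{sec3_eq0} converts the boundary sum into $\int_{\mathcal{E}_h^i}[\underline{\bm{\tau}}_h]\cdot\{\bm{u}\}ds+\int_{\mathcal{E}_h}\{\underline{\bm{\tau}}_h\}:\llbracket\bm{u}\rrbracket ds$. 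The regularity $\bm{u}\in\bm{H}^1(\Omega)$ forces $\{\bm{u}\}=\bm{u}$ and $\llbracket\bm{u}\rrbracket=0$ on $\mathcal{E}_h^i$, while the boundary condition \eqref{Brinkman1_eq3} replaces $\bm{u}$ by $\bm{g}$ on $\mathcal{E}_h^\partial$.

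The main obstacle, and the only genuinely symmetry-dependent step, is reconciling the surviving boundary term $\int_{\mathcal{E}_h^\partial}\{\underline{\bm{\tau}}_h\}:\llbracket\bm{g}\rrbracket ds$ with the functional $\langle\underline{\bm{\tau}}_h\bm{n},\bm{g}\rangle_{\mathcal{E}_h^\partial}$ in $F_h$. Here I would exploit the symmetry of $\underline{\bm{\tau}}_h$: writing $\llbracket\bm{g}\rrbracket=\tfrac12(\bm{g}\otimes\bm{n}+\bm{n}\otimes\bm{g})$, each summand contracts against the symmetric $\underline{\bm{\tau}}_h$ to give $\bm{g}\cdot(\underline{\bm{\tau}}_h\bm{n})$, whence $\{\underline{\bm{\tau}}_h\}:\llbracket\bm{g}\rrbracket=(\underline{\bm{\tau}}_h\bm{n})\cdot\bm{g}$ on the boundary. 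Assembling $a_h(\underline{\bm{\sigma}},\underline{\bm{\tau}}_h)+b_h(\underline{\bm{\tau}}_h,\bm{u})$ then telescopes the volume and interior-edge terms against one another, leaving precisely $\langle\underline{\bm{\tau}}_h\bm{n},\bm{g}\rangle_{\mathcal{E}_h^\partial}$. This closes the consistency identity for the exact solution, after which \eqref{sec4_eq3} is immediate by subtraction and bilinearity.
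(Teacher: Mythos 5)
Your proposal is correct and follows essentially the same route as the paper's proof: establish consistency of the exact solution with the discrete forms by combining $[\underline{\bm{\sigma}}]=0$, $\llbracket\bm{u}\rrbracket=0$ on interior edges, elementwise Green's formula with the interface identity \eqref{sec3_eq0}, the symmetry of $\underline{\bm{\tau}}_h$ to identify the boundary term with $\langle\underline{\bm{\tau}}_h\bm{n},\bm{g}\rangle_{\mathcal{E}_h^\partial}$, and then subtract the discrete scheme. The only cosmetic remark is that $\underline{\bm{\sigma}}^d$ is trace-free by definition of the deviatoric part, not as a consequence of incompressibility, but this does not affect the argument.
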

\begin{proof} 
Since $(\underline{\bm{\sigma}},\bm{u})\in
\underline{\bm{\Sigma}}\times\bm{H}^1(\Omega)$, we have
$[\underline{\bm{\sigma}}]=0$ and $\llbracket \bm{u} \rrbracket=0$ on
each edge $e \in \mathcal{E}_h^i$, then
\begin{align*}	
  &\ \ \ \ a_h(\underline{\bm{\sigma}},\underline{\bm{\tau}}_h) +
  b_h(\underline{\bm{\tau}}_h,\bm{u})-\langle\underline{\bm{\tau}}_h\bm{n},\bm{g}\rangle_{\mathcal{E}_h^\partial}\\
  &=
  \frac{1}{2}(\underline{\bm{\sigma}}^d,\underline{\bm{\tau}}_h^d)+(\mathbf{div}_h{\underline{\bm{\tau}}_h},\bm{u})-\int_{\mathcal{E}_h^i}\left[\underline{\bm{\tau}}_h\right]\cdot\{\bm{u}\}ds-\langle\underline{\bm{\tau}}_h\bm{n},\bm{g}\rangle_{\mathcal{E}_h^\partial}\\
   &=\frac{1}{2}(\underline{\bm{\sigma}}^d,\underline{\bm{\tau}}_h^d)-(\underline{\bm{\varepsilon}}(\bm{u}),\underline{\bm{\tau}}_h)+\int_{\mathcal{E}_h^i}\left[\underline{\bm{\tau}}_h\right]\cdot\{\bm{u}\}ds+\int_{\mathcal{E}_h}\{\underline{\bm{\tau}}_h\}:\llbracket
   \bm{u} \rrbracket ds
   -\int_{\mathcal{E}_h^i}\left[\underline{\bm{\tau}}_h\right]\cdot\{\bm{u}\}ds-\langle\underline{\bm{\tau}}_h\bm{n},\bm{g}\rangle_{\mathcal{E}_h^\partial}\\
	&=(\frac{1}{2}\underline{\bm{\sigma}}^d-\underline{\bm{\varepsilon}}(\bm{u}),\underline{\bm{\tau}}_h)=0,
	\end{align*}
	and
	\begin{eqnarray}\label{sec4_eq4}
  b_h(\underline{\bm{\sigma}},\bm{v}_h)-s(\bm{u},\bm{v}_h)= (
      \mathbf{div}\underline{\bm{\sigma}},\bm{v}_h)-(\kappa^{-1}\bm{u},\bm{v}_h)=
  -(\bm{f},\bm{v}_h).
	\end{eqnarray}
Then, from \eqref{weakBrinkman3}, we have
\begin{subequations} \label{equ:error-equ}
\begin{align}
\label{sec4_eq3.1}
&a_h(\underline{\bm{\sigma}}-\underline{\bm{\sigma}}_h,\underline{\bm{\tau}}_h)
  + b_h(\underline{\bm{\tau}}_h,\bm{u}-\bm{u}_h) =0   &\forall
  \underline{\bm{\tau}}_h \in \underline{\bm{\Sigma}}_h,\\
\label{sec4_eq3.2}
&b_h(\underline{\bm{\sigma}}-\underline{\bm{\sigma}}_h,\bm{v}_h) -
s(\bm{u}-\bm{u}_h,\bm{v}_h) =0   &\forall \bm{v}_h \in \bm{V}_h.
\end{align}
\end{subequations}
By \eqref{sec3_eq7}, we complete the proof. 
\end{proof}

\begin{myTheorem}\label{sec4_Lemma2}
Let $(\underline{\bm{\sigma}},\bm{u}) \in
\underline{\bm{\Sigma}}\times\bm{H}^1(\Omega)$ be the solution of
\eqref{weakBrinkman}, and $(\underline{\bm{\sigma}}_h,\bm{u}_h) \in
\underline{\bm{\Sigma}}_h\times\bm{V}_h$ be the solution of
\eqref{weakBrinkman3}. Then, we have
\begin{align}\label{sec4_eq5}
\|\underline{\bm{\sigma}}-\underline{\bm{\sigma}}_h\|_{\underline{\bm{\Sigma}}_h}+\|\bm{u}-\bm{u}_h\|_{\bm{V}}
\lesssim \inf_{\underline{\bm{\tau}}_h\in
\underline{\bm{\Sigma}}_h}\|\underline{\bm{\sigma}}-\underline{\bm{\tau}}_h\|_{\underline{\bm{\Sigma}}_h}+\inf_{\bm{v}_h\in
\bm{V}_h}(\|\bm{u}-\bm{v}_h\|_{\bm{V}}+\widetilde{\kappa}^{-\frac{1}{2}}\sum_{K
\in \mathcal {T}_h}h|\bm{v}_h-\bm{u}|_{1,K}).
		\end{align}
\end{myTheorem}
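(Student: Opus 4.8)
The plan is to run a standard Céa/Strang-type argument built on the discrete inf-sup condition (Lemma~\ref{sec3_Lemma3}) together with the Galerkin orthogonality of Lemma~\ref{sec4_Lemma1}. Fix arbitrary $\underline{\bm{\tau}}_h \in \underline{\bm{\Sigma}}_h$ and $\bm{v}_h \in \bm{V}_h$, and split the error by the triangle inequality as
\[
\|\underline{\bm{\sigma}}-\underline{\bm{\sigma}}_h\|_{\underline{\bm{\Sigma}}_h}+\|\bm{u}-\bm{u}_h\|_{\bm{V}}
\le
\big(\|\underline{\bm{\sigma}}-\underline{\bm{\tau}}_h\|_{\underline{\bm{\Sigma}}_h}+\|\bm{u}-\bm{v}_h\|_{\bm{V}}\big)
+\big(\|\underline{\bm{\sigma}}_h-\underline{\bm{\tau}}_h\|_{\underline{\bm{\Sigma}}_h}+\|\bm{u}_h-\bm{v}_h\|_{\bm{V}}\big).
\]
The first bracket is already of the form appearing on the right-hand side (the jump seminorm of $\underline{\bm{\sigma}}-\underline{\bm{\tau}}_h$ is well defined because $\underline{\bm{\sigma}}\in\underline{\bm{\Sigma}}\subset\underline{\bm{H}}(\bm{{\rm div}};\mathbb{S})$ has continuous normal traces, so $[\underline{\bm{\sigma}}]=0$ and $|\underline{\bm{\sigma}}-\underline{\bm{\tau}}_h|_*=|\underline{\bm{\tau}}_h|_*$); only the discrete remainder $(\underline{\bm{\sigma}}_h-\underline{\bm{\tau}}_h,\bm{u}_h-\bm{v}_h)\in\underline{\bm{\Sigma}}_h\times\bm{V}_h$ must still be controlled.

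Next I would apply the discrete inf-sup condition to this remainder, which gives
\[
\|\underline{\bm{\sigma}}_h-\underline{\bm{\tau}}_h\|_{\underline{\bm{\Sigma}}_h}+\|\bm{u}_h-\bm{v}_h\|_{\bm{V}}
\lesssim
\sup_{(\underline{\bm{\chi}}_h,\bm{w}_h)\neq 0}
\frac{A_h((\underline{\bm{\sigma}}_h-\underline{\bm{\tau}}_h,\bm{u}_h-\bm{v}_h),(\underline{\bm{\chi}}_h,\bm{w}_h))}
{\|\underline{\bm{\chi}}_h\|_{\underline{\bm{\Sigma}}_h}+\|\bm{w}_h\|_{\bm{V}}}.
\]
The consistency identity \eqref{sec4_eq3}, namely $A_h((\underline{\bm{\sigma}}-\underline{\bm{\sigma}}_h,\bm{u}-\bm{u}_h),(\underline{\bm{\chi}}_h,\bm{w}_h))=0$, lets me replace the numerator by $A_h((\underline{\bm{\sigma}}-\underline{\bm{\tau}}_h,\bm{u}-\bm{v}_h),(\underline{\bm{\chi}}_h,\bm{w}_h))$ (subtract and add the exact solution, using linearity in the first slot). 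Thus the whole problem reduces to bounding $A_h$ evaluated at the \emph{approximation error} $(\underline{\bm{\sigma}}-\underline{\bm{\tau}}_h,\bm{u}-\bm{v}_h)$ against a discrete test pair.

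The heart of the proof is therefore a \emph{modified} continuity estimate for $A_h$ in which the first slot carries the non-polynomial velocity component $\bm{u}-\bm{v}_h$. Expanding $A_h$ by \eqref{sec3_eq7} and estimating $a_h$, the volume parts of the two $b_h$ terms, and $s$ by Cauchy--Schwarz with the weights $\widetilde{\kappa}^{\pm 1/2}$ exactly as in Lemma~\ref{sec3_Lemma2}, everything goes through verbatim; here one uses $\widetilde{\kappa}\le 1\le\widetilde{\kappa}^{-1}$ and $\kappa^{-1}\le\widetilde{\kappa}^{-1}$ to keep the constants independent of $\kappa$. The only term that behaves differently is the interface contribution $\int_{\mathcal{E}_h^i}[\underline{\bm{\chi}}_h]\cdot\{\bm{u}-\bm{v}_h\}\,ds$ from $b_h(\underline{\bm{\chi}}_h,\bm{u}-\bm{v}_h)$: since $\bm{u}-\bm{v}_h$ is not a polynomial, I cannot invoke the inverse inequality \eqref{sec3_eq6} as was done for a discrete velocity, and must instead use only the trace inequality \eqref{sec3_eq5}, which yields
\[
\|h_e^{1/2}\{\bm{u}-\bm{v}_h\}\|_e \lesssim \|\bm{u}-\bm{v}_h\|_K + h_K|\bm{u}-\bm{v}_h|_{1,K}.
\]
Pairing this with $\|h_e^{-1/2}[\underline{\bm{\chi}}_h]\|_e$, summing, and invoking $h_K\le h$, $\ell^2\hookrightarrow\ell^1$ and $\widetilde{\kappa}^{-1/2}\ge 1$ produces precisely the extra factor $\widetilde{\kappa}^{-1/2}\sum_{K}h|\bm{u}-\bm{v}_h|_{1,K}$ together with $\|\bm{u}-\bm{v}_h\|_{\bm{V}}$. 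Collecting all contributions gives
\[
A_h((\underline{\bm{\sigma}}-\underline{\bm{\tau}}_h,\bm{u}-\bm{v}_h),(\underline{\bm{\chi}}_h,\bm{w}_h))
\lesssim
\Big(\|\underline{\bm{\sigma}}-\underline{\bm{\tau}}_h\|_{\underline{\bm{\Sigma}}_h}+\|\bm{u}-\bm{v}_h\|_{\bm{V}}+\widetilde{\kappa}^{-1/2}\sum_{K\in\mathcal{T}_h}h|\bm{u}-\bm{v}_h|_{1,K}\Big)\big(\|\underline{\bm{\chi}}_h\|_{\underline{\bm{\Sigma}}_h}+\|\bm{w}_h\|_{\bm{V}}\big).
\]

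Finally I would feed this bound into the inf-sup step, combine with the triangle inequality of the first paragraph, and take the infima over $\underline{\bm{\tau}}_h\in\underline{\bm{\Sigma}}_h$ and $\bm{v}_h\in\bm{V}_h$ independently to obtain the asserted quasi-optimal estimate. I expect the main obstacle to be the bookkeeping of the $\widetilde{\kappa}$-weights: every jump and trace manipulation has to be balanced so that the dependence matches the parameter-dependent norms \eqref{norm} and \eqref{Cnorm-V}, which is what ultimately forces the $\widetilde{\kappa}^{-1/2}$ in front of the consistency-type gradient term and guarantees robustness in both the Stokes- and Darcy-dominated regimes.
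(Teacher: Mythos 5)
Your proposal is correct and follows essentially the same route as the paper: discrete inf-sup applied to the discrete remainder, Galerkin orthogonality to shift the numerator onto the approximation error, a continuity bound in which the interface term $\int_{\mathcal{E}_h^i}[\underline{\bm{\theta}}_h]\cdot\{\bm{v}_h-\bm{u}\}\,ds$ is handled by the trace inequality alone (since $\bm{v}_h-\bm{u}$ is not a polynomial), producing the extra $\widetilde{\kappa}^{-1/2}\sum_K h|\bm{v}_h-\bm{u}|_{1,K}$ term, and a final triangle inequality. The only difference is cosmetic: you state the triangle-inequality splitting first, whereas the paper invokes it at the end.
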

\begin{proof} From Lemma \ref{sec2_Lemma2}, Lemma \ref{sec3_Lemma3}
and Lemma \ref{sec4_Lemma1}, we obtain that for any
$\underline{\bm{\tau}}_h \in \underline{\bm{\Sigma}}_h$ and $\bm{v}_h
\in \bm{V}_h$,
	\begin{align}\label{sec4_eq8} \|\underline{\bm{\tau}}_h-\underline{\bm{\sigma}}_h\|_{\underline{\bm{\Sigma}}_h}+\|\bm{v}_h-\bm{u}_h\|_{\bm{V}}&\lesssim\sup_{(\underline{\bm{\theta}}_h,\bm{w}_h) \in \underline{\bm{\Sigma}}_h\times\bm{V}_h}\frac{A_h((\underline{\bm{\tau}}_h-\underline{\bm{\sigma}}_h,\bm{v}_h-\bm{u}_h),(\underline{\bm{\theta}}_h,\bm{w}_h))}{\|\underline{\bm{\theta}}_h\|_{\underline{\bm{\Sigma}}_h}+ \|\bm{w}_h\|_{\bm{V}}}\nonumber\\
	&=\sup_{(\underline{\bm{\theta}}_h,\bm{w}_h) \in \underline{\bm{\Sigma}}_h\times\bm{V}_h}\frac{A_h((\underline{\bm{\tau}}_h-\underline{\bm{\sigma}},\bm{v}_h-\bm{u}),(\underline{\bm{\theta}}_h,\bm{w}_h))}{\|\underline{\bm{\theta}}_h\|_{\underline{\bm{\Sigma}}_h}+ \|\bm{w}_h\|_{\bm{V}}}\nonumber\\
	&=\sup_{(\underline{\bm{\theta}}_h,\bm{w}_h) \in \underline{\bm{\Sigma}}_h\times\bm{V}_h}\frac{a_h(\underline{\bm{\tau}}_h-\underline{\bm{\sigma}},\underline{\bm{\theta}}_h)+b_h(\underline{\bm{\theta}}_h,\bm{v}_h-\bm{u})-b_h(\underline{\bm{\tau}}_h-\underline{\bm{\sigma}},\bm{w}_h)+s(\bm{v}_h-\bm{u},\bm{w}_h)}{\|\underline{\bm{\theta}}_h\|_{\underline{\bm{\Sigma}}_h}+\|\bm{w}_h\|_{\bm{V}}}\nonumber\\
(\text{by } \kappa^{-1} \lesssim \widetilde{\kappa}^{-1})\quad\qquad\qquad	 &\lesssim\|\underline{\bm{\tau}}_h-\underline{\bm{\sigma}}\|_{\underline{\bm{\Sigma}}_h}+\|\bm{v}_h-\bm{u}\|_{\bm{V}}+\sup_{\underline{\bm{\theta}}_h \in \underline{\bm{\Sigma}}_h}\frac{b_h(\underline{\bm{\theta}}_h,\bm{v}_h-\bm{u})}{\|\underline{\bm{\theta}}_h\|_{\underline{\bm{\Sigma}}_h}}\nonumber\\
	&\lesssim \|\underline{\bm{\tau}}_h-\underline{\bm{\sigma}}\|_{\underline{\bm{\Sigma}}_h}+\|\bm{v}_h-\bm{u}\|_{\bm{V}}+\widetilde{\kappa}^{-\frac{1}{2}}\sum_{K \in \mathcal {T}_h}h|\bm{v}_h-\bm{u}|_{1,K}.
\end{align}
Then, the triangle inequality indicates the estimate \eqref{sec4_eq5}.
\end{proof}

Recall that $\widetilde{\kappa}^{-1} \geq 1$ and the definition of
$\|\cdot\|_{\bm{V}}$ in \eqref{Cnorm-V}, the above theorem shows the
parameter-robust error estimate of $\bm{u}_h$ in $\bm{L}^2$ norm by
the standard interpolation theory (cf. \cite{Scott1990Finite}). 

\begin{myTheorem} \label{thm:error1}
Assume that the solution of \eqref{weakBrinkman} satisfies
$(\underline{\bm{\sigma}}, \bm{u})\in \underline{\bm{H}}^{k+2}(\Omega) \times
\bm{H}^{k+1}(\Omega)$.  Then, the solution of the mixed DG problem
\eqref{weakBrinkman3} satisfies for any $\kappa > 0$,
\begin{equation} \label{equ:error-u}
\|\bm{u} - \bm{u}_h\|_{0} \lesssim h^{k+1}(|\underline{\bm \sigma}|_{k+2} +
|\bm{u}|_{k+1}).
\end{equation}
Further, if $\kappa \gtrsim 1$ (high permeability case), we have  
\begin{equation} \label{equ:error-sigma1}
\|\underline{\bm \sigma}^d - \underline{\bm{\sigma}}_h^d\|_0 +
\|\mathbf{div}_h ( \underline{\bm \sigma} -
\underline{\bm{\sigma}}_h)\|_{0} + |\underline{\bm{\sigma}}_h|_*
\lesssim h^{k+1}(|\underline{\bm \sigma}|_{k+2} + |\bm{u}|_{k+1}).
\end{equation}
Here, the hidden constants in \eqref{equ:error-u} and
\eqref{equ:error-sigma1} are both independent of $\kappa$.
\end{myTheorem}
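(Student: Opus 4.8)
The plan is to derive both estimates from the quasi-optimal bound \eqref{sec4_eq5} of Theorem~\ref{sec4_Lemma2} by inserting concrete approximants and carefully tracking the powers of $\widetilde{\kappa}$. First I would fix the approximants: let $\underline{\bm{\tau}}_h$ be the $\underline{\bm{L}}^2$-orthogonal projection of $\underline{\bm{\sigma}}$ onto $\underline{\bm{\Sigma}}_h$ and let $\bm{v}_h$ be the $\bm{L}^2$-projection (or a Scott--Zhang interpolant) of $\bm{u}$ onto $\bm{V}_h$. Because the $\underline{\bm{L}}^2$-projection reproduces element-wise integrals against constants, $\int_{\Omega}{\rm tr}(\underline{\bm{\tau}}_h)d\bm{x} = \int_{\Omega}{\rm tr}(\underline{\bm{\sigma}})d\bm{x} = 0$, so $\underline{\bm{\tau}}_h$ automatically respects the mean-trace constraint of $\underline{\bm{\Sigma}}_h$ and no mean-value correction is needed. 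Under the assumed regularity, standard approximation theory (cf.~\cite{Scott1990Finite}) then supplies $\|\underline{\bm{\sigma}}^d - \underline{\bm{\tau}}_h^d\|_0 \lesssim h^{k+1}|\underline{\bm{\sigma}}|_{k+2}$, $\|\mathbf{div}_h(\underline{\bm{\sigma}}-\underline{\bm{\tau}}_h)\|_0 \lesssim h^{k+1}|\underline{\bm{\sigma}}|_{k+2}$, $\|\bm{u}-\bm{v}_h\|_0 \lesssim h^{k+1}|\bm{u}|_{k+1}$, and $\sum_{K}h\,|\bm{u}-\bm{v}_h|_{1,K}\lesssim h^{k+1}|\bm{u}|_{k+1}$.

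For the jump seminorm I would exploit that $\underline{\bm{\sigma}}\in\underline{\bm{H}}^{k+2}(\Omega)$ is globally continuous, so $[\underline{\bm{\sigma}}]=0$ on every interior edge and hence $[\underline{\bm{\sigma}}-\underline{\bm{\tau}}_h]=-[\underline{\bm{\tau}}_h]$; applying the trace inequality \eqref{sec3_eq5} of Lemma~\ref{sec3_Lemma1} edge-by-edge together with the projection error estimate gives $|\underline{\bm{\sigma}}-\underline{\bm{\tau}}_h|_* \lesssim h^{k+1}|\underline{\bm{\sigma}}|_{k+2}$. Substituting all of these into the definitions \eqref{norm} and \eqref{Cnorm-V} of the norms, the right-hand side of \eqref{sec4_eq5} is controlled by
\[
\|\underline{\bm{\sigma}}^d-\underline{\bm{\tau}}_h^d\|_0 + \widetilde{\kappa}^{1/2}\|\mathbf{div}_h(\underline{\bm{\sigma}}-\underline{\bm{\tau}}_h)\|_0 + |\underline{\bm{\sigma}}-\underline{\bm{\tau}}_h|_* + \widetilde{\kappa}^{-1/2}\Big(\|\bm{u}-\bm{v}_h\|_0 + \sum_{K\in\mathcal{T}_h} h\,|\bm{u}-\bm{v}_h|_{1,K}\Big).
\]

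For the velocity estimate \eqref{equ:error-u} I would recall that $\|\bm{u}-\bm{u}_h\|_{\bm{V}} = \widetilde{\kappa}^{-1/2}\|\bm{u}-\bm{u}_h\|_0$, so multiplying the resulting inequality through by $\widetilde{\kappa}^{1/2}$ isolates $\|\bm{u}-\bm{u}_h\|_0$ on the left. The key observation is $\widetilde{\kappa}^{1/2}\le 1$: it converts the unfavorable weights into harmless factors, since the two $\widetilde{\kappa}^{-1/2}$ velocity terms become $\widetilde{\kappa}^{1/2}\cdot\widetilde{\kappa}^{-1/2}=1$ while the stress terms only acquire a factor $\widetilde{\kappa}^{1/2}\le 1$. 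All constants thus become independent of $\kappa$, and the interpolation bounds above deliver \eqref{equ:error-u} for every $\kappa>0$.

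For the stress estimate \eqref{equ:error-sigma1} I would specialize to $\kappa\gtrsim1$, where $\widetilde{\kappa}=\min\{\kappa,1\}\sim1$ so that, up to $\kappa$-independent constants, $\|\cdot\|_{\underline{\bm{\Sigma}}_h}$ coincides with the full broken norm $\|(\cdot)^d\|_0^2 + \|\mathbf{div}_h(\cdot)\|_0^2 + |\cdot|_*^2$. Theorem~\ref{sec4_Lemma2} then bounds $\|\underline{\bm{\sigma}}-\underline{\bm{\sigma}}_h\|_{\underline{\bm{\Sigma}}_h}$, and hence each of $\|\underline{\bm{\sigma}}^d-\underline{\bm{\sigma}}_h^d\|_0$, $\|\mathbf{div}_h(\underline{\bm{\sigma}}-\underline{\bm{\sigma}}_h)\|_0$ and $|\underline{\bm{\sigma}}-\underline{\bm{\sigma}}_h|_*$, by $h^{k+1}(|\underline{\bm{\sigma}}|_{k+2}+|\bm{u}|_{k+1})$; using once more that $[\underline{\bm{\sigma}}]=0$ turns $|\underline{\bm{\sigma}}-\underline{\bm{\sigma}}_h|_*$ into $|\underline{\bm{\sigma}}_h|_*$, which is exactly \eqref{equ:error-sigma1}. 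The main obstacle throughout is the parameter bookkeeping, and it is precisely what forces the restriction $\kappa\gtrsim1$ in the stress bound: for small $\kappa$ the factor $\widetilde{\kappa}^{-1/2}$ on the right of \eqref{sec4_eq5}, combined with the weight $\widetilde{\kappa}^{1/2}$ carried by the divergence term on the left, would conspire into an uncontrollable $\widetilde{\kappa}^{-1}=\kappa^{-1}$ blow-up, so that a $\kappa$-uniform bound on all stress components is unavailable in the Darcy-dominated regime.
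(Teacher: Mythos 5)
Your proposal is correct and follows exactly the route the paper intends: the paper gives no separate proof of Theorem \ref{thm:error1}, remarking only that it follows from Theorem \ref{sec4_Lemma2} together with standard interpolation theory, and your choice of approximants, the observation that the $\widetilde{\kappa}^{-1/2}$ weights cancel after multiplying by $\widetilde{\kappa}^{1/2}\le 1$ for the velocity bound, and the equivalence $\widetilde{\kappa}\sim 1$ for the stress bound when $\kappa\gtrsim 1$ are precisely the omitted details. The only (inherited) infelicity is that the term $\sum_{K}h|\bm{v}_h-\bm{u}|_{1,K}$ in \eqref{sec4_eq5} should be read as the broken seminorm $h\bigl(\sum_{K}|\bm{v}_h-\bm{u}|_{1,K}^2\bigr)^{1/2}$, as its derivation in the proof of Theorem \ref{sec4_Lemma2} shows; with the literal $\ell^1$ sum the interpolation bound you quote would pick up a factor of the number of elements.
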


\begin{myRemark}
By Lemma \ref{sec3_Lemma0}, we have that if $\kappa \gtrsim 1$, then 
$\|\underline{\bm \sigma} - \underline{\bm{\sigma}}_h\|_0 \lesssim h^{k+1}(|\underline{\bm \sigma}|_{k+2} + |\bm{u}|_{k+1})$.
\end{myRemark}

\subsection{Parameter-robust error estimate of pseudostress}
In this subsection, we show a parameter-robust error estimate of
pseudostress for arbitrary permeability, which fills the gap of
\eqref{equ:error-sigma1} in Theorem \ref{thm:error1}. The result
hinges on the parameter-robust estimate of velocity given in
\eqref{equ:error-u}.  

Let $\bm{P}_h: \bm{V} \rightarrow \bm{V}_h$ denote the
$\bm{L}^2$-orthogonal projection defined by
\begin{equation}\label{l2-operator}
\int_{K}(\bm{P}_h\bm{v} -
\bm{v})\cdot\bm{w}_h d\bm{x}=0\quad\forall\,\bm{w}_h \in
\bm{V}_h,\ K\in \mathcal {T}_h.
\end{equation} 
For $\underline{\bm \sigma} \in \underline{\bm H}^{k+2}(\Omega)$, let
$\underline{\bm{\sigma}}_I^{\rm SZ} \in \underline{\bm{H}}^1(\Omega)$
be the Scott-Zhang interpolation (cf.  \cite{Scott1990Finite}) that
satisfies 
\begin{equation} \label{equ:Scott-Zhang}
\|\underline{\bm \sigma} - \underline{\bm \sigma}_I^{\rm SZ}\|_0 + h
|\underline{\bm \sigma} - \underline{\bm \sigma}_I^{\rm SZ}|_1 \lesssim
h^{k+2}|\underline{\bm \sigma}|_{k+2}.
\end{equation} 
Next, we modify the Scott-Zhang interpolation by
${\underline{\bm \sigma}}_I = \underline{\bm \sigma}_I^{\rm SZ} -
\frac{1}{n|\Omega|} \int_\Omega \mathrm{tr}(\underline{\bm
\sigma}_I^{\rm SZ})\underline{\bm I} d\bm{x} \in \underline{\bm{H}}^1(\Omega) \cap \underline{\bm{\Sigma}}_h$.
Using the property of Scott-Zhang interpolation in
\eqref{equ:Scott-Zhang} and the fact that $\int_\Omega \mathrm{tr}(\underline{\bm \sigma}) d\bm{x} = 0$, we have 
$$ 
\left|\frac{1}{n|\Omega|} \int_\Omega \mathrm{tr}(\underline{\bm
\sigma}_I^{\rm SZ})\underline{\bm I} d\bm{x}\right| = 
\left|
\frac{1}{n|\Omega|} \int_\Omega \mathrm{tr}(\underline{\bm
\sigma}_I^{\rm SZ} - \underline{\bm \sigma})\underline{\bm I} d\bm{x}
\right| \lesssim \|\mathrm{tr}(\underline{\bm
\sigma}_I^{\rm SZ} - \underline{\bm \sigma})\|_0 \lesssim
h^{k+2}|\underline{\bm \sigma}|_{k+2}. 
$$ 
Hence, the modified Scott-Zhang interpolation has the same
approximation as the standard one, i.e., 
\begin{equation} \label{equ:Scott-Zhang2}
\|\underline{\bm \sigma} - {\underline{\bm \sigma}}_I\|_0 + h
|\underline{\bm \sigma} - {\underline{\bm \sigma}}_I|_1 \lesssim h^{k+2}|\underline{\bm \sigma}|_{k+2}.
\end{equation} 
We also denote $\underline{\bm e}_{\underline{\bm \sigma}} =
\underline{\bm \sigma}_I - \underline{\bm \sigma}_h$, ${\bm e}_{\bm u}
= \bm{P}_h \bm{u} - \bm{u}_h$.

\begin{myTheorem} \label{thm:error2}
Assume that the solution of \eqref{weakBrinkman} satisfies
$(\underline{\bm{\sigma}}, \bm{u})\in \underline{\bm{H}}^{k+2}(\Omega) \times
\bm{H}^{k+1}(\Omega)$.  Then, the solution of the mixed DG problem
\eqref{weakBrinkman3} satisfies for any $\kappa > 0$,
\begin{equation} \label{equ:error-sigma2}
\|\underline{\bm \sigma}^d - \underline{\bm{\sigma}}_h^d\|_0^2 +
|\underline{\bm{\sigma}}_h|_*^2 \lesssim h^{k+1}(|\underline{\bm
\sigma}|_{k+2} + |\bm{u}|_{k+1}),
\end{equation}
where the hidden constant is independent of $\kappa$.
\end{myTheorem}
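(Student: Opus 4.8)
The plan is to estimate the discrete errors $\underline{\bm e}_{\underline{\bm\sigma}} = \underline{\bm\sigma}_I - \underline{\bm\sigma}_h$ and ${\bm e}_{\bm u} = \bm{P}_h\bm{u} - \bm{u}_h$ introduced above, exploiting that the coercive part of $a_h$ reproduces exactly the target quantity, namely $a_h(\underline{\bm e}_{\underline{\bm\sigma}},\underline{\bm e}_{\underline{\bm\sigma}}) = \frac{1}{2}\|\underline{\bm e}_{\underline{\bm\sigma}}^d\|_0^2 + |\underline{\bm e}_{\underline{\bm\sigma}}|_*^2$. Since $\underline{\bm\sigma}$ and its modified Scott--Zhang interpolant are continuous, $[\underline{\bm\sigma}] = [\underline{\bm\sigma}_I] = 0$, so that $|\underline{\bm\sigma}_h|_* = |\underline{\bm e}_{\underline{\bm\sigma}}|_*$; together with the triangle inequality and the bound \eqref{equ:Scott-Zhang2} (which controls $\|\underline{\bm\sigma}^d - \underline{\bm\sigma}_I^d\|_0$ at the higher order $h^{k+2}$), the assertion reduces to bounding $\|\underline{\bm e}_{\underline{\bm\sigma}}^d\|_0 + |\underline{\bm e}_{\underline{\bm\sigma}}|_*$ by $h^{k+1}(|\underline{\bm\sigma}|_{k+2} + |\bm{u}|_{k+1})$.

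First I would test the error equation \eqref{sec4_eq3.1} with $\underline{\bm\tau}_h = \underline{\bm e}_{\underline{\bm\sigma}}$ and insert the splittings $\underline{\bm\sigma}-\underline{\bm\sigma}_h = (\underline{\bm\sigma}-\underline{\bm\sigma}_I)+\underline{\bm e}_{\underline{\bm\sigma}}$ and $\bm{u}-\bm{u}_h = (\bm{u}-\bm{P}_h\bm{u})+{\bm e}_{\bm u}$. Two simplifications occur at once: the jump terms in $a_h(\underline{\bm\sigma}_I-\underline{\bm\sigma},\cdot)$ and in $b_h(\underline{\bm\sigma}_I-\underline{\bm\sigma},\cdot)$ vanish by continuity, and the volume term $(\mathbf{div}_h\underline{\bm e}_{\underline{\bm\sigma}},\bm{u}-\bm{P}_h\bm{u})$ vanishes because $\mathbf{div}_h\underline{\bm e}_{\underline{\bm\sigma}}\in\bm{V}_h$ and by the orthogonality \eqref{l2-operator}. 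The key algebraic move is to treat $b_h(\underline{\bm e}_{\underline{\bm\sigma}},{\bm e}_{\bm u})$ by splitting $\underline{\bm e}_{\underline{\bm\sigma}}$ once more and invoking the second error equation \eqref{sec4_eq3.2} with $\bm{v}_h = {\bm e}_{\bm u}$, which turns $b_h(\underline{\bm\sigma}-\underline{\bm\sigma}_h,{\bm e}_{\bm u})$ into $s(\bm{u}-\bm{u}_h,{\bm e}_{\bm u}) = \kappa^{-1}\|{\bm e}_{\bm u}\|_0^2$, the cross term again dropping out by orthogonality. Collecting, this yields
\[
a_h(\underline{\bm e}_{\underline{\bm\sigma}},\underline{\bm e}_{\underline{\bm\sigma}}) + \kappa^{-1}\|{\bm e}_{\bm u}\|_0^2 = a_h(\underline{\bm\sigma}_I-\underline{\bm\sigma},\underline{\bm e}_{\underline{\bm\sigma}}) - b_h(\underline{\bm e}_{\underline{\bm\sigma}},\bm{u}-\bm{P}_h\bm{u}) - b_h(\underline{\bm\sigma}_I-\underline{\bm\sigma},{\bm e}_{\bm u}).
\]

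The point that makes the estimate $\kappa$-robust, and the step I expect to be the main obstacle, is precisely the term $\kappa^{-1}\|{\bm e}_{\bm u}\|_0^2$: for small $\kappa$ it cannot be dominated through the velocity error, since only the $\bm{L}^2$ estimate \eqref{equ:error-u} is robust while the energy estimate is not. The resolution is that this term appears on the left with a favorable sign and may simply be discarded. It then remains to bound the right-hand side uniformly in $\kappa$: Cauchy--Schwarz with \eqref{equ:Scott-Zhang2} gives $a_h(\underline{\bm\sigma}_I-\underline{\bm\sigma},\underline{\bm e}_{\underline{\bm\sigma}}) \lesssim h^{k+2}|\underline{\bm\sigma}|_{k+2}\,\|\underline{\bm e}_{\underline{\bm\sigma}}^d\|_0$; the surviving edge part of $b_h(\underline{\bm e}_{\underline{\bm\sigma}},\bm{u}-\bm{P}_h\bm{u})$ is $\lesssim |\underline{\bm e}_{\underline{\bm\sigma}}|_*\,h^{k+1}|\bm{u}|_{k+1}$ by the trace inequality \eqref{sec3_eq5} and the approximation property of $\bm{P}_h$; and the surviving volume part of $b_h(\underline{\bm\sigma}_I-\underline{\bm\sigma},{\bm e}_{\bm u})$ is $\lesssim h^{k+1}|\underline{\bm\sigma}|_{k+2}\,\|{\bm e}_{\bm u}\|_0$, where $\|{\bm e}_{\bm u}\|_0 \lesssim h^{k+1}(|\underline{\bm\sigma}|_{k+2}+|\bm{u}|_{k+1})$ follows from \eqref{equ:error-u} and the triangle inequality, making this contribution of order $h^{2(k+1)}$. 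A Young inequality to absorb $\|\underline{\bm e}_{\underline{\bm\sigma}}^d\|_0$ and $|\underline{\bm e}_{\underline{\bm\sigma}}|_*$ into the left, followed by the triangle inequality noted in the first paragraph, completes the proof with constants independent of $\kappa$.
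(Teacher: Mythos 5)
Your proposal is correct and follows essentially the same route as the paper: the paper also tests the error equations with $\underline{\bm e}_{\underline{\bm \sigma}}$ and $\bm{e}_{\bm u}$, arrives at the identical identity $\tfrac{1}{2}\|\underline{\bm e}_{\underline{\bm \sigma}}^d\|_0^2 + |\underline{\bm \sigma}_h|_*^2 + \kappa^{-1}\|\bm{e}_{\bm u}\|_0^2 = a_h(\underline{\bm\sigma}_I-\underline{\bm\sigma},\underline{\bm e}_{\underline{\bm\sigma}}) - b_h(\underline{\bm e}_{\underline{\bm\sigma}},\bm{u}-\bm{P}_h\bm{u}) - b_h(\underline{\bm\sigma}_I-\underline{\bm\sigma},\bm{e}_{\bm u})$, exploits the same orthogonality cancellations, keeps the $\kappa^{-1}$ term on the left with its favorable sign, and closes the last term with the robust $\bm{L}^2$ velocity estimate \eqref{equ:error-u} exactly as you do. No gaps.
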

\begin{proof}
Taking $\underline{\bm \tau}_h = \underline{\bm e}_{\underline{\bm
\sigma}} $ and $\bm{v}_h = \bm{e}_{\bm u}$ in the error equation
\eqref{equ:error-equ}, we have 
$$ 
\begin{aligned}
&\frac{1}{2}(\underline{\bm \sigma}^d - \underline{\bm \sigma}_h^d,
\underline{\bm \sigma}_I^d - \underline{\bm \sigma}_h^d) +
\int_{\mathcal{E}_h^i} \frac{1}{h_e} [\underline{\bm \sigma}_h] \cdot 
[\underline{\bm \sigma}_h] ds + b_h(\underline{\bm e}_{\underline{\bm
\sigma}}, {\bm e}_{\bm u}) 
+ b_h(\underline{\bm e}_{\underline{\bm \sigma}}, \bm{u} - {\bm
P}_h{\bm u}) = 0, \\
& b_h(\underline{\bm e}_{\underline{\bm \sigma}}, {\bm e}_{\bm u}) + 
b_h(\underline{\bm \sigma} - \underline{\bm \sigma}_I, {\bm
e}_{\bm u}) - \kappa^{-1}\|{\bm e}_{\bm u}\|_0^2 = 0.
\end{aligned}
$$ 
Subtracting the above equations, we get  
\begin{equation} \label{equ:l2-estimate1}
\frac{1}{2}\|\underline{\bm e}_{\underline{\bm \sigma}}^d\|_0^2 +
|\underline{\bm \sigma}_h|_*^2 + \kappa^{-1}\|{\bm
e}_{\bm u}\|_0^2 = -\frac{1}{2}(\underline{\bm \sigma}^d -
 \underline{\bm \sigma}_I^d, \underline{\bm e}_{\underline{\bm
\sigma}}^d) - b_h(\underline{\bm e}_{\underline{\bm
\sigma}}, \bm{u} - \bm{P}_h{\bm u}) + b_h(\underline{\bm \sigma} -
\underline{\bm \sigma}_I, \bm{e}_{\bm u}).
\end{equation}
Using the property of Scott-Zhang interpolation
\eqref{equ:Scott-Zhang2}, the estimate \eqref{equ:error-u}, Cauchy-Schwarz
inequality, and trace inequality, we have 
$$ 
\begin{aligned}
-\frac{1}{2}(\underline{\bm \sigma}^d - \underline{\bm \sigma}_I^d,
\underline{\bm e}_{\underline{\bm \sigma}}^d) 
& \leq 
\frac{1}{4} \|\underline{\bm e}_{\underline{\bm \sigma}}^d\|_0^2 +
Ch^{2k+4}|\underline{\bm \sigma}|_{k+2}^2,\\ 
- b_h(\underline{\bm e}_{\underline{\bm \sigma}}, \bm{u} - \bm{P}_h{\bm u}) &=
-(\mathbf{div}_h\underline{\bm e}_{\underline{\bm \sigma}},\bm{u} -
    \bm{P}_h{\bm u}) - \int_{\mathcal E_h^i} [\underline{\bm
  \sigma}_h] \cdot \{\bm{u} - \bm{P}_h{\bm u}\}
  ds \\
& = - \int_{\mathcal
  E_h^i} h_e^{-1/2}[\underline{\bm \sigma}_h] \cdot h_e^{1/2}\{\bm{u}
  - \bm{P}_h{\bm u}\} ds 
\leq \frac{1}{2}|\underline{\bm \sigma}_h|_*^2 + C
h^{2k+2}|u|_{k+1}^2, \\
b_h(\underline{\bm \sigma} - \underline{\bm \sigma}_I, \bm{e}_{\bm
u}) & \leq \|\bm{e}_{\bm u}\|_0^2 + C h^{2k+2}|\underline{\bm
  \sigma}|_{k+2}^2 \leq Ch^{2k+2}(|\underline{\bm \sigma}|_{k+2} +
|\bm{u}|_{k+1})^2. 
\end{aligned}
$$ 
Taking above inequalities into the right hand side of
\eqref{equ:l2-estimate1}, we have 
$$ 
\frac{1}{4}\|\underline{\bm e}_{\underline{\bm \sigma}}^d\|_0^2 + \frac{1}{2}|\underline{\bm \sigma}_h|_*^2 +
\kappa^{-1}\|{\bm e}_{\bm u}\|_0^2 \leq 
Ch^{2k+2}(|\underline{\bm \sigma}|_{k+2} +
|\bm{u}|_{k+1})^2,
$$ 
which leads to the desired estimate \eqref{equ:error-sigma2} by
triangle inequality.
\end{proof}

\begin{myRemark}
As a byproduct in the proof of above theorem, it can be seen that,
under the condition of Theorem \ref{thm:error2},
\begin{equation} \label{equ:kappa-eu}
\kappa^{-1/2}\|{\bm e}_{\bm u}\|_0 \lesssim h^{k+1} 
(|\underline{\bm \sigma}|_{k+2} + |\bm{u}|_{k+1}),
\end{equation}
which implies that $\bm{u}_h \to \bm{P}_h\bm{u}$ as $\kappa \to 0$. 
\end{myRemark}

We then have the error estimate of pressure in the following theorem. 
\begin{myTheorem} \label{thm:error3}
Assume that the solution of \eqref{weakBrinkman} satisfies
$(\underline{\bm{\sigma}}, \bm{u})\in \underline{\bm{H}}^{k+2}(\Omega) \times
\bm{H}^{k+1}(\Omega)$.  Then, the solution of the mixed DG problem
\eqref{weakBrinkman3} satisfies 
\begin{equation} \label{equ:error-p} 
\|\mathrm{tr}(\underline{\bm \sigma} -
\underline{\bm{\sigma}}_h)\|_0^2 \lesssim \widetilde{\kappa}^{-1/2}
h^{k+1}(|\underline{\bm \sigma}|_{k+2} + |\bm{u}|_{k+1}),
\end{equation}
\end{myTheorem}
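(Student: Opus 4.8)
The plan is to estimate $\|\mathrm{tr}(\underline{\bm\sigma} - \underline{\bm{\sigma}}_h)\|_0$ by a duality (inf-sup) argument, which is nothing but the pressure estimate since $p = -\frac1n\mathrm{tr}(\underline{\bm\sigma})$. Set $\xi = \mathrm{tr}(\underline{\bm\sigma} - \underline{\bm{\sigma}}_h)$. As both traces integrate to zero over $\Omega$, $\xi$ has vanishing mean, so the surjectivity of $\mathbf{div}$ on $\bm{H}^1_0(\Omega)$ (a $\kappa$-independent property of the domain) furnishes $\bm{v} \in \bm{H}^1_0(\Omega)$ with $\mathbf{div}\,\bm{v} = \xi$ and $\|\bm{v}\|_1 \lesssim \|\xi\|_0$. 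Using the algebraic identity $\mathrm{tr}(\underline{\bm\tau})\underline{\bm I} = n(\underline{\bm\tau} - \underline{\bm\tau}^d)$ and the symmetry of $\underline{\bm\sigma} - \underline{\bm{\sigma}}_h$ (so it tests against $\underline{\bm\varepsilon}(\bm v)$ rather than $\underline{\bm\nabla}\bm v$), I would write
\[
\|\xi\|_0^2 = (\xi, \mathbf{div}\,\bm{v}) = n\big[(\underline{\bm\sigma} - \underline{\bm{\sigma}}_h, \underline{\bm\varepsilon}(\bm v)) - ((\underline{\bm\sigma} - \underline{\bm{\sigma}}_h)^d, \underline{\bm\varepsilon}(\bm v))\big].
\]

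The second term is harmless: since $\mathrm{tr}((\underline{\bm\sigma}-\underline{\bm{\sigma}}_h)^d) = 0$ it equals $((\underline{\bm\sigma}-\underline{\bm{\sigma}}_h)^d, \underline{\bm\varepsilon}(\bm v)^d)$ and is bounded by $\|(\underline{\bm\sigma}-\underline{\bm{\sigma}}_h)^d\|_0\|\bm v\|_1 \lesssim h^{k+1}(|\underline{\bm\sigma}|_{k+2}+|\bm u|_{k+1})\|\bm v\|_1$ directly from Theorem \ref{thm:error2}. For the first term I would integrate by parts element by element and apply the jump identity \eqref{sec3_eq0}; because $\bm v \in \bm{H}^1_0(\Omega)$ forces $\llbracket\bm v\rrbracket = 0$ on every edge and $[\underline{\bm\sigma}] = 0$ across interior edges, all the consistency contributions collapse and leave the clean identity $(\underline{\bm\sigma}-\underline{\bm{\sigma}}_h, \underline{\bm\varepsilon}(\bm v)) = -b_h(\underline{\bm\sigma}-\underline{\bm{\sigma}}_h, \bm v)$.

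The crux is therefore to estimate $b_h(\underline{\bm\sigma}-\underline{\bm{\sigma}}_h, \bm v)$, which I would split with the $\bm{L}^2$-projection as $b_h(\underline{\bm\sigma}-\underline{\bm{\sigma}}_h, \bm v - \bm P_h\bm v) + b_h(\underline{\bm\sigma}-\underline{\bm{\sigma}}_h, \bm P_h\bm v)$. In the first piece the volume contribution of $\mathbf{div}_h\underline{\bm{\sigma}}_h \in \bm V_h$ is killed by the orthogonality $\bm v - \bm P_h\bm v \perp \bm V_h$, leaving only $(\mathbf{div}\,\underline{\bm\sigma} - \bm P_h\mathbf{div}\,\underline{\bm\sigma}, \bm v - \bm P_h\bm v)$ plus the edge term $\int_{\mathcal E_h^i}[\underline{\bm{\sigma}}_h]\cdot\{\bm v - \bm P_h\bm v\}$; approximation, the trace inequality, and $|\underline{\bm{\sigma}}_h|_* \lesssim h^{k+1}(\cdots)$ from Theorem \ref{thm:error2} make this $\mathcal O(h^{k+2})$, hence subdominant. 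The second piece is where the parameter enters: the error equation \eqref{sec4_eq3.2} gives $b_h(\underline{\bm\sigma}-\underline{\bm{\sigma}}_h, \bm P_h\bm v) = s(\bm u - \bm u_h, \bm P_h\bm v) = \kappa^{-1}(\bm u - \bm u_h, \bm P_h\bm v)$. Decomposing $\bm u - \bm u_h = (\bm u - \bm P_h\bm u) + \bm e_{\bm u}$, the first summand is $\bm L^2$-orthogonal to $\bm P_h\bm v$ and vanishes, while the second obeys $\kappa^{-1}\|\bm e_{\bm u}\|_0\|\bm v\|_0 = \kappa^{-1/2}\big(\kappa^{-1/2}\|\bm e_{\bm u}\|_0\big)\|\bm v\|_0$; here I invoke the sharp velocity bound \eqref{equ:kappa-eu}, $\kappa^{-1/2}\|\bm e_{\bm u}\|_0 \lesssim h^{k+1}(\cdots)$, together with $\kappa^{-1/2} \le \widetilde{\kappa}^{-1/2}$, producing the factor $\widetilde{\kappa}^{-1/2}h^{k+1}(\cdots)\|\bm v\|_1$.

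Collecting the three contributions and dividing by $\|\bm v\|_1 \lesssim \|\xi\|_0$ yields $\|\mathrm{tr}(\underline{\bm\sigma}-\underline{\bm{\sigma}}_h)\|_0 \lesssim \widetilde{\kappa}^{-1/2}h^{k+1}(|\underline{\bm\sigma}|_{k+2}+|\bm u|_{k+1})$, which is the sharp (unsquared) form of \eqref{equ:error-p} and implies the stated bound. I expect the only genuine obstacle to be the bookkeeping of the parameter in the $s(\cdot,\cdot)$ term: the whole point is that the $\bm L^2$-orthogonality removes the uncontrolled $\bm u - \bm P_h\bm u$ part, so that the single surviving $\kappa^{-1}$ factorises as $\kappa^{-1/2}\cdot\kappa^{-1/2}$ and is absorbed using \eqref{equ:kappa-eu} and $\kappa^{-1/2}\le\widetilde{\kappa}^{-1/2}$; since the $\mathbf{div}$-inf-sup constant on $\bm{H}^1_0(\Omega)$ is independent of $\kappa$, the estimate is otherwise parameter-robust, and the $\widetilde{\kappa}^{-1/2}$ is genuinely the price of the Darcy limit.
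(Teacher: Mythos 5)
Your argument is correct, and it reaches the (unsquared, hence sharper) form of \eqref{equ:error-p} by a genuinely different route than the paper. The paper's proof never leaves the discrete setting: it tests the error equation \eqref{sec4_eq3.2} with $\bm{v}_h = \mathbf{div}_h\,\underline{\bm e}_{\underline{\bm \sigma}}$ to first establish $\|\mathbf{div}_h\,\underline{\bm e}_{\underline{\bm \sigma}}\|_0 \lesssim \widetilde{\kappa}^{-1/2}h^{k+1}(|\underline{\bm \sigma}|_{k+2}+|\bm u|_{k+1})$, and then converts control of the deviatoric part, the broken divergence and the jumps into control of the trace via the discrete tr--dev--div inequality (Lemma \ref{sec3_Lemma0} together with \eqref{sec2_eq2_2}), finishing with the triangle inequality against the Scott--Zhang interpolant. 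You instead run a continuous duality argument: the surjectivity of $\mathbf{div}$ from $\bm{H}^1_0(\Omega)$ onto $L^2_0(\Omega)$, elementwise integration by parts back to $b_h$, and the $\bm{L}^2$-projection orthogonality to annihilate both $\mathbf{div}_h\underline{\bm{\sigma}}_h$ against $\bm v - \bm P_h\bm v$ and $\bm u - \bm P_h\bm u$ against $\bm P_h\bm v$. Both proofs draw the factor $\widetilde{\kappa}^{-1/2}$ from exactly the same place --- one leftover power $\kappa^{-1/2}$ after absorbing the other into $\|\bm e_{\bm u}\|_0$ via \eqref{equ:kappa-eu} --- and both lean on Theorem \ref{thm:error2} for the deviatoric part and the jump seminorm. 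What the paper's route buys is the intermediate bound on $\|\mathbf{div}_h\,\underline{\bm e}_{\underline{\bm \sigma}}\|_0$ (of independent interest, and reused in Theorem \ref{sec4_Theorem3}) while staying entirely within quantities already controlled; what yours buys is independence from the discrete norm-equivalence Lemma \ref{sec3_Lemma0}, at the modest price of invoking the continuous Stokes inf-sup and of needing $\|\mathbf{div}\,\underline{\bm\sigma} - \bm P_h\mathbf{div}\,\underline{\bm\sigma}\|_0 \lesssim h^{k+1}|\underline{\bm\sigma}|_{k+2}$, which your regularity hypothesis covers. The only cosmetic caveat is that $b_h(\cdot,\cdot)$ must be read as formally extended to $\bm v \in \bm{H}^1_0(\Omega)$, which is harmless since all traces involved are well defined.
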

\begin{proof}
Taking $\bm{v}_h = \mathbf{div}_h {\underline{\bm
e}}_{\underline{\bm \sigma}}$ in \eqref{sec4_eq3.2}, we obtain 
$$ 
\begin{aligned}
\|\mathbf{div}_h {\underline{\bm
e}}_{\underline{\bm \sigma}}\|_0^2 &=- (\mathbf{div}_h(\underline{\bm
\sigma} - {\underline{\bm \sigma}}_I), \mathbf{div}_h
    {\underline{\bm e}}_{\underline{\bm \sigma}}) 
- \int_{\mathcal E_h^i} [\underline{\bm \sigma}_h] \cdot \{ 
\mathbf{div}_h
{\underline{\bm e}}_{\underline{\bm \sigma}}
\} ds + \kappa^{-1}(\bm{e}_{\bm u}, 
\mathbf{div}_h {\underline{\bm e}}_{\underline{\bm \sigma}})
\\
&\leq \frac{1}{2} \|\mathbf{div}_h {\underline{\bm
e}}_{\underline{\bm \sigma}}\|_0^2 + Ch^{2k+2}|\underline{\bm
  \sigma}|_{k+2}^2 + C|\underline{\bm \sigma}_h|_*^2 +
  C\kappa^{-2}\|\bm{e}_{\bm u}\|_0^2.
\end{aligned}
$$ 
Using Theorem \ref{thm:error2} and \eqref{equ:kappa-eu}, one gets 
$$ 
\|\mathbf{div}_h {\underline{\bm
e}}_{\underline{\bm \sigma}}\|_0 \lesssim \widetilde{\kappa}^{-1/2}
h^{k+1}(|\underline{\bm \sigma}|_{k+2} + |\bm{u}|_{k+1}).
$$ 
In light of \eqref{equ:Scott-Zhang2}, we apply Lemma \ref{sec3_Lemma0}
to have 
$$ 
\begin{aligned}
\|\mathrm{tr}({\underline{\bm
e}}_{\underline{\bm \sigma}})\|_0 &\lesssim \| {\underline{\bm
e}}_{\underline{\bm \sigma}}^d\|_0 + \| \mathbf{div}_h
{\underline{\bm e}}_{\underline{\bm \sigma}}\|_0 +  
|{\underline{\bm e}}_{\underline{\bm \sigma}}|_{*} \\ 
&\leq
\| \underline{\bm \sigma}^d - \underline{\bm \sigma}_h^d\|_0 +
\| \underline{\bm \sigma}^d - {\underline{\bm \sigma}}_I^d\|_0
+ \| \mathbf{div}_h
{\underline{\bm e}}_{\underline{\bm \sigma}}\|_0 +  
|\underline{\bm \sigma}_h|_{*} \\
&\lesssim \widetilde{\kappa}^{-1/2}
h^{k+1}(|\underline{\bm \sigma}|_{k+2} + |\bm{u}|_{k+1}),
\end{aligned}
$$ 
which leads to the desired estimate. 
\end{proof}

\subsection{Improved error estimates in $L^2$ norm for the
pseudostress and pressure}\label{L2_norm} 

In this subsestion, following a similar argument in
\cite{Wang201sMixed}, we show that the $L^2$ error
estimates for pesudostress and pressure are both optimal when the Stokes finite element
pair $\bm{\mathcal{P}}^c_{k+2}$-$\mathcal{P}_{k+1}$ ($k\geq n$) is
stable. Here, $\mathcal{P}^c$ represents the conforming polynomial.

Now, we recall the classical BDM projection $\pi_h^{c}$ (cf.
\cite{Brezzi1985Two} for two-dimension case and \cite{Brezzi1987Mixed}
for three-dimensional case).  The projection $\pi_h^{c}: \bm{H}({\rm
  div}) \to \bm{W}_{h}=\{\bm{v} \in \bm{H}({\rm div}): \bm{v}|_{K}\in
\bm{\mathcal{P}}_{k+1}(K) \ \forall  K \in \mathcal {T}_h\}$ is
defined by
\begin{subequations}\label{BDM_operator1}
\begin{align}
&\int_e(\pi_h^{c}\bm{v}-\bm{v})\cdot\bm{n}q_h = 0 & \forall q_h \in
\mathcal{P}_{k+1}(e),\\
&\int_K(\pi_h^{c}\bm{v}-\bm{v})\cdot\nabla q_h = 0 & \forall q_h \in
\mathcal{P}_{k}(K),\\
&\int_K(\pi_h^{c}\bm{v}-\bm{v})\cdot\bm{w}_h =0 & \forall \bm{w}_h \in
\bm{W}_{h,*}(K).
\end{align}
\end{subequations}
Here, $\bm{W}_{h,*}(K) = \{ \bm{z} \in \bm{\mathcal{P}}_{k+1}(K):
\bm{z}\cdot\bm{n}=0 \ {\rm on}\ e \in \partial K \ {\rm and}\
(\bm{z},\nabla q_h)_K=0\ \forall\, q_h \in \mathcal{P}_{k}(K)\}$.

Then, based on the projection \eqref{BDM_operator1}, on each element
$K \in \mathcal {T}_h$, we define a function
$\widetilde{\underline{\bm{\sigma}}}_h$ as the only element of
$\underline{\bm{\mathcal{P}}}^{\mathbb{M}}_{k+1}(K)$ by
$\widehat{\underline{\bm{\sigma}}}_h$ and $\underline{\bm{\sigma}}_h$
in \eqref{weakBrinkman1}.
\begin{subequations}\label{BDM_operator2}
\begin{align}
	\label{BDM_operator2_eq1}
   &\int_e(\widetilde{\underline{\bm{\sigma}}}_h-\widehat{\underline{\bm{\sigma}}}_h)\bm{n}\cdot\bm{v}_hds
   = 0\qquad  & \forall \bm{v}_h \in \bm{\mathcal{P}}_{k+1}(e),\\
	\label{BDM_operator2_eq2}
   &\int_K(\widetilde{\underline{\bm{\sigma}}}_h-\underline{\bm{\sigma}}_h):\underline{\bm{\nabla}}\bm{v}_hd\bm{x}
   = 0 \qquad  & \forall \bm{v}_h \in \bm{\mathcal{P}}_{k}(K),\\
	\label{BDM_operator2-eq3}
  &\int_K(\widetilde{\underline{\bm{\sigma}}}_h-\underline{\bm{\sigma}}_h):\underline{\bm{\tau}}_hd\bm{x}
  =0 \qquad  & \forall \underline{\bm{\tau}}_h \in
  \underline{\bm{\Sigma}}_{h,*}^{c}(K),
	\end{align}
\end{subequations}
where $\underline{\bm{\Sigma}}_{h,*}^{c}(K)=\{\underline{\bm{\theta}}
\in \underline{\bm{\mathcal{P}}}^{\mathbb{M}}_{k+1}(K):
\underline{\bm{\theta}}\bm{n}=\mathbf{0}\ {\rm on}\ e \in \partial K \
{\rm and}\
  (\underline{\bm{\theta}},\underline{\bm{\nabla}}\bm{v}_h)_K=0 \
  \forall\, \bm{v}_h \in \bm{\mathcal{P}}_{k}(K)\}$.

The system \eqref{BDM_operator2} can be regarded as the row-wise BDM
projection. According to the definition of $\pi_h^{c}$ and the fact
that the normal component of the numerical trace for the flux is
single-valued, we have the following lemma.
\begin{myLemma}\label{sec4_Lemma3}
The function $\widetilde{\underline{\bm{\sigma}}}_h$ in
\eqref{BDM_operator2} is well-defined,
	\begin{align}\label{sec4_eq9.1}
  &\widetilde{\underline{\bm{\sigma}}}_h \in
  \underline{\bm{\Sigma}}_h^{c}=\{\underline{\bm{\tau}} \in
  \underline{\bm{H}}( \bm{{\rm div}};\mathbb{M}):
    \underline{\bm{\tau}}|_{K}\in
    \underline{\bm{\mathcal{P}}}^{\mathbb{M}}_{k+1}(K) \ \forall  K
    \in \mathcal {T}_h, \ \int_{\Omega}{\rm
      tr}(\underline{\bm{\tau}})d\bm{x}=0\},\\
\label{sec4_eq9.2}
&\|\underline{\bm{\sigma}}_h-\widetilde{\underline{\bm{\sigma}}}_h\|_{0,K}
\lesssim
h_{K}^{1/2}\|(\underline{\bm{\sigma}}_h-\widehat{\underline{\bm{\sigma}}}_h)\bm{n}\|_{0,\partial
  K}.
	\end{align}
\end{myLemma}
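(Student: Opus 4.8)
The plan is to read \eqref{BDM_operator2} as the classical BDM degrees of freedom of $\pi_h^c$ applied row by row to $\underline{\bm{\mathcal{P}}}^{\mathbb{M}}_{k+1}(K)$, and to extract the three assertions of the lemma---well-posedness of the local problem, membership in $\underline{\bm{\Sigma}}_h^c$, and the local bound \eqref{sec4_eq9.2}---from the unisolvency of the BDM element together with a scaling argument. For the well-definedness, I would observe that the functionals in \eqref{BDM_operator2_eq1}--\eqref{BDM_operator2-eq3} (face moments against $\bm{\mathcal{P}}_{k+1}(e)$, interior moments against $\underline{\bm{\nabla}}\bm{\mathcal{P}}_k(K)$, and bubble moments against $\underline{\bm{\Sigma}}_{h,*}^c(K)$) are exactly the degrees of freedom defining $\pi_h^c$ in \eqref{BDM_operator1}, now with data supplied by $\widehat{\underline{\bm{\sigma}}}_h$ and $\underline{\bm{\sigma}}_h$. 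Their number equals $\dim\underline{\bm{\mathcal{P}}}^{\mathbb{M}}_{k+1}(K)$, so the unisolvency of the BDM element (which depends only on the functionals, not on the data) makes the linear system \eqref{BDM_operator2} uniquely solvable.

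For the conformity in \eqref{sec4_eq9.1} I would argue on each interior face $e$ shared by $K^+$ and $K^-$. Since $\widetilde{\underline{\bm{\sigma}}}_h^{\pm}\bm{n}^{\pm}|_e \in \bm{\mathcal{P}}_{k+1}(e)$ and, by \eqref{BDM_operator2_eq1}, has the same moments against $\bm{\mathcal{P}}_{k+1}(e)$ as $\widehat{\underline{\bm{\sigma}}}_h\bm{n}^{\pm}$, the two agree on $e$, i.e. $\widetilde{\underline{\bm{\sigma}}}_h^{\pm}\bm{n}^{\pm} = \widehat{\underline{\bm{\sigma}}}_h\bm{n}^{\pm}$. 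Because the numerical flux \eqref{sec3_eq1} is single valued, $\widehat{\underline{\bm{\sigma}}}_h\bm{n}^+ + \widehat{\underline{\bm{\sigma}}}_h\bm{n}^- = 0$, so the normal traces of $\widetilde{\underline{\bm{\sigma}}}_h$ match across $e$, which is precisely membership in $\underline{\bm{H}}(\bm{{\rm div}};\mathbb{M})$. The zero-mean trace condition follows by choosing $\bm{v}_h = \bm{x}$ (the position vector, which lies in $\bm{\mathcal{P}}_1(K)\subset\bm{\mathcal{P}}_k(K)$ as $k\ge n\ge 2$) in \eqref{BDM_operator2_eq2}: since $\underline{\bm{\nabla}}\bm{x} = \underline{\bm{I}}$, this gives $\int_K \mathrm{tr}(\widetilde{\underline{\bm{\sigma}}}_h-\underline{\bm{\sigma}}_h)\,d\bm{x}=0$, and summing over $K\in\mathcal{T}_h$ together with $\int_\Omega \mathrm{tr}(\underline{\bm{\sigma}}_h)\,d\bm{x}=0$ yields $\int_\Omega \mathrm{tr}(\widetilde{\underline{\bm{\sigma}}}_h)\,d\bm{x}=0$.

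For the estimate \eqref{sec4_eq9.2} I would set $\underline{\bm{\delta}}_h = \widetilde{\underline{\bm{\sigma}}}_h - \underline{\bm{\sigma}}_h \in \underline{\bm{\mathcal{P}}}^{\mathbb{M}}_{k+1}(K)$. By \eqref{BDM_operator2_eq2} and \eqref{BDM_operator2-eq3} all interior degrees of freedom of $\underline{\bm{\delta}}_h$ vanish, and by \eqref{BDM_operator2_eq1} its face moments reproduce those of $(\widehat{\underline{\bm{\sigma}}}_h-\underline{\bm{\sigma}}_h)\bm{n}$; as the latter already lies in $\bm{\mathcal{P}}_{k+1}(e)$, in fact $\underline{\bm{\delta}}_h\bm{n} = (\widehat{\underline{\bm{\sigma}}}_h-\underline{\bm{\sigma}}_h)\bm{n}$ on each face of $K$. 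On the subspace of $\underline{\bm{\mathcal{P}}}^{\mathbb{M}}_{k+1}(\hat K)$ with vanishing interior moments, the map to the face moments is an isomorphism, so $\|\cdot\,\hat{\bm{n}}\|_{0,\partial\hat K}$ is a norm equivalent to $\|\cdot\|_{0,\hat K}$; mapping back with the shape-regular scalings $\|\cdot\|_{0,K}^2\sim h_K^{n}\|\cdot\|_{0,\hat K}^2$ and $\|\cdot\|_{0,e}^2\sim h_K^{n-1}\|\cdot\|_{0,\hat e}^2$ gives $\|\underline{\bm{\delta}}_h\|_{0,K}^2 \lesssim h_K\,\|\underline{\bm{\delta}}_h\bm{n}\|_{0,\partial K}^2 = h_K\,\|(\underline{\bm{\sigma}}_h-\widehat{\underline{\bm{\sigma}}}_h)\bm{n}\|_{0,\partial K}^2$, which is \eqref{sec4_eq9.2}.

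I expect the main obstacle to be the conformity step rather than the scaling bound: one must combine the single-valuedness of the flux \eqref{sec3_eq1} with the \emph{lossless} reproduction of the face data, the latter resting on the observation that $(\widehat{\underline{\bm{\sigma}}}_h-\underline{\bm{\sigma}}_h)\bm{n}|_e$ is a genuine $\bm{\mathcal{P}}_{k+1}(e)$ polynomial, so that matching all moments against $\bm{\mathcal{P}}_{k+1}(e)$ forces pointwise equality of normal traces. Once the interior moments of $\underline{\bm{\delta}}_h$ are seen to vanish, the edge-based bound \eqref{sec4_eq9.2} is routine finite-dimensional norm equivalence plus scaling.
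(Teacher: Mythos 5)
Your proposal is correct and follows essentially the same route as the paper: the paper likewise reduces well-posedness and \eqref{sec4_eq9.1} to the unisolvency of the row-wise BDM degrees of freedom, the single-valuedness of the numerical flux, and $\int_\Omega\mathrm{tr}(\underline{\bm{\sigma}}_h)\,d\bm{x}=0$, and it obtains \eqref{sec4_eq9.2} by writing the difference system for $\widetilde{\underline{\bm{\sigma}}}_h-\underline{\bm{\sigma}}_h$ and invoking ``the standard scaling argument,'' which your norm-equivalence computation makes explicit. Your additional detail for the zero-mean trace (testing \eqref{BDM_operator2_eq2} with $\bm{v}_h=\bm{x}$) is a legitimate filling-in of what the paper leaves implicit, valid under the subsection's standing assumption $k\geq n\geq 1$.
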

\begin{proof}
According to definition of BDM projection, the fact that the normal
component of the numerical trace for the flux is single-valued and
$\int_{\Omega}{\rm tr}(\underline{\bm{\sigma}}_h)d\bm{x}=0$, the
well-posedness and \eqref{sec4_eq9.1} are directly available. Setting
$\underline{\bm{\sigma}}=\widetilde{\underline{\bm{\sigma}}}_h-\underline{\bm{\sigma}}_h$, from \eqref{BDM_operator2}, we know
   \begin{align*}
   &\int_e\underline{\bm{\sigma}}\bm{n}\cdot\bm{v}_hds=\int_e(\widehat{\underline{\bm{\sigma}}}_h-\underline{\bm{\sigma}}_h)\bm{n}\cdot\bm{v}_hds
   \qquad  & \forall \bm{v}_h \in \bm{\mathcal{P}}_{k+1}(e),\\
   &\int_K\underline{\bm{\sigma}}:\underline{\bm{\nabla}}\bm{v}_hd\bm{x}=0
   \qquad  & \forall \bm{v}_h \in \bm{\mathcal{P}}_{k}(K),\\
  &\int_K\underline{\bm{\sigma}}:\underline{\bm{\tau}}_hd\bm{x} =0
  \qquad  & \forall \underline{\bm{\tau}}_h \in
  \underline{\bm{\Sigma}}_{h,*}^{c}(K).
	\end{align*}
By the standard scaling argument, we have \eqref{sec4_eq9.2}.
\end{proof}

Then, by the similar argument in \cite{Gong2018New, Wang201sMixed}, we
symmetrize $\widetilde{\underline{\bm{\sigma}}}_h$ to establish the
$\underline{\bm{L}}^2$ error estimate for pseudostress variable. With
the help of the stable Stokes pair
$\bm{\mathcal{P}}^c_{k+2}$-$\mathcal{P}_{k+1}$ ($k\geq n$), one finds
the following result. We refer the reader to \cite{Wang201sMixed} for
detailed discussion.
\begin{myLemma}[cf. \cite{Wang201sMixed}]\label{sec4_Lemma4}
Assume that the Stokes pair
$\bm{\mathcal{P}}^c_{k+2}$-$\mathcal{P}_{k+1}$ ($k\geq n$) is stable
on the decomposition $\mathcal {T}_h$. For
$\widetilde{\underline{\bm{\sigma}}}_h$ given in
\eqref{BDM_operator2}, there exists
  $\widetilde{\underline{\bm{\tau}}}_h \in
  \underline{\bm{\Sigma}}_h^{c}$  such that
  $\underline{\bm{\sigma}}_{h,*}=\widetilde{\underline{\bm{\sigma}}}_h+\widetilde{\underline{\bm{\tau}}}_h
  \in \underline{\bm{H}}(\mathbf{div};\mathbb{S})$,
    \begin{equation}\label{sec4_eq10}
	\mathbf{div}\widetilde{\underline{\bm{\tau}}}_h =0 \ {\rm and} \ \|\widetilde{\underline{\bm{\tau}}}_h\|_{0}
	\lesssim \|\underline{\bm{\sigma}}_h-\widetilde{\underline{\bm{\sigma}}}_h\|_{0}.
	\end{equation}
\end{myLemma}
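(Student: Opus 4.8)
The plan is to construct the correction $\widetilde{\underline{\bm{\tau}}}_h$ explicitly as a (row-wise) curl of a Stokes velocity, so that it is automatically divergence-free and $\underline{\bm{H}}(\mathbf{div};\mathbb{M})$-conforming, and to choose the potential so that the skew-symmetric part of $\widetilde{\underline{\bm{\sigma}}}_h$ is exactly cancelled. Write $\mathrm{as}(\underline{\bm{\tau}})$ for the axial representation of $\underline{\bm{\tau}}-\underline{\bm{\tau}}^t$ (a scalar in $2$D, a vector in $3$D). Since $\underline{\bm{\sigma}}_h \in \underline{\bm{\Sigma}}_h$ is symmetric, $\mathrm{as}(\widetilde{\underline{\bm{\sigma}}}_h)=\mathrm{as}(\widetilde{\underline{\bm{\sigma}}}_h-\underline{\bm{\sigma}}_h)$, so it suffices to produce $\widetilde{\underline{\bm{\tau}}}_h \in \underline{\bm{\Sigma}}_h^{c}$ with $\mathbf{div}\,\widetilde{\underline{\bm{\tau}}}_h=0$, $\mathrm{as}(\widetilde{\underline{\bm{\tau}}}_h)=-\mathrm{as}(\widetilde{\underline{\bm{\sigma}}}_h)$ and $\|\widetilde{\underline{\bm{\tau}}}_h\|_0 \lesssim \|\mathrm{as}(\widetilde{\underline{\bm{\sigma}}}_h)\|_0$. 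The assertion $\underline{\bm{\sigma}}_{h,*}\in\underline{\bm{H}}(\mathbf{div};\mathbb{S})$ is then immediate (both summands are $\underline{\bm{H}}(\mathbf{div};\mathbb{M})$-conforming and the sum is symmetric), and the stated bound follows from $\|\mathrm{as}(\widetilde{\underline{\bm{\sigma}}}_h)\|_0=\|\mathrm{as}(\widetilde{\underline{\bm{\sigma}}}_h-\underline{\bm{\sigma}}_h)\|_0 \lesssim \|\widetilde{\underline{\bm{\sigma}}}_h-\underline{\bm{\sigma}}_h\|_0$.

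Before constructing the potential I would verify the compatibility that makes the construction solvable: $\mathrm{as}(\widetilde{\underline{\bm{\sigma}}}_h)$ has vanishing mean on each $K$. This comes directly from the second defining relation \eqref{BDM_operator2_eq2} of the row-wise BDM projection. Testing with the rigid rotation fields (the degree-one vectors $(x_2,-x_1)$ in $2$D, and $\bm{e}_j\times\bm{x}$ in $3$D), which lie in $\bm{\mathcal{P}}_k(K)$ since $k\geq n\geq 1$, the matrix $\underline{\bm{\nabla}}\bm{v}_h$ is a constant skew-symmetric matrix, so $\int_K(\widetilde{\underline{\bm{\sigma}}}_h-\underline{\bm{\sigma}}_h):\underline{\bm{\nabla}}\bm{v}_h=0$ reads $\int_K \mathrm{as}(\widetilde{\underline{\bm{\sigma}}}_h-\underline{\bm{\sigma}}_h)=0$. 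Symmetry of $\underline{\bm{\sigma}}_h$ then gives $\int_K \mathrm{as}(\widetilde{\underline{\bm{\sigma}}}_h)=0$ on every $K$, and in particular $\int_\Omega \mathrm{as}(\widetilde{\underline{\bm{\sigma}}}_h)=0$.

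In two dimensions the construction is explicit. With $\underline{\bm{\chi}}=\left(\begin{smallmatrix}0 & 1\\ -1 & 0\end{smallmatrix}\right)$ one has $\mathrm{as}(\underline{\bm{\tau}})=\underline{\bm{\tau}}:\underline{\bm{\chi}}$, and for $\bm{\phi}=(\phi_1,\phi_2)$ let $\mathbf{curl}\,\bm{\phi}$ be the tensor whose $i$-th row is $(\partial_2\phi_i,-\partial_1\phi_i)$. A direct computation gives $\mathbf{div}(\mathbf{curl}\,\bm{\phi})=0$ row by row and $\mathrm{as}(\mathbf{curl}\,\bm{\phi})=-\mathrm{div}\,\bm{\phi}$; moreover, if $\bm{\phi}$ is continuous then $\mathbf{curl}\,\bm{\phi}\in\underline{\bm{H}}(\mathbf{div};\mathbb{M})$, since the normal trace of each row equals the continuous tangential derivative of $\phi_i$. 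Using the assumed stability of $\bm{\mathcal{P}}^c_{k+2}$-$\mathcal{P}_{k+1}$, the divergence maps $[\mathcal{P}^c_{k+2}]^2\cap\bm{H}^1_0(\Omega)$ onto the zero-mean subspace of the pressure space $\mathcal{P}_{k+1}$ with a bounded right inverse; since $\mathrm{as}(\widetilde{\underline{\bm{\sigma}}}_h)$ has zero mean by the previous step, I choose $\bm{\phi}$ in this velocity space with $\mathrm{div}\,\bm{\phi}=\mathrm{as}(\widetilde{\underline{\bm{\sigma}}}_h)$ and $\|\bm{\phi}\|_1\lesssim\|\mathrm{as}(\widetilde{\underline{\bm{\sigma}}}_h)\|_0$, and set $\widetilde{\underline{\bm{\tau}}}_h=\mathbf{curl}\,\bm{\phi}$. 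Then $\mathbf{div}\,\widetilde{\underline{\bm{\tau}}}_h=0$, $\mathrm{as}(\widetilde{\underline{\bm{\tau}}}_h)=-\mathrm{as}(\widetilde{\underline{\bm{\sigma}}}_h)$ so $\underline{\bm{\sigma}}_{h,*}$ is symmetric, the homogeneous boundary condition forces $\int_\Omega\mathrm{tr}(\widetilde{\underline{\bm{\tau}}}_h)=0$ so that $\widetilde{\underline{\bm{\tau}}}_h\in\underline{\bm{\Sigma}}_h^{c}$, and $\|\widetilde{\underline{\bm{\tau}}}_h\|_0=|\bm{\phi}|_1\leq\|\bm{\phi}\|_1\lesssim\|\mathrm{as}(\widetilde{\underline{\bm{\sigma}}}_h)\|_0\lesssim\|\widetilde{\underline{\bm{\sigma}}}_h-\underline{\bm{\sigma}}_h\|_0$.

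The three-dimensional case follows the same template, and this is where the real work lies. There $\mathrm{as}(\widetilde{\underline{\bm{\sigma}}}_h)$ is a vector field, divergence-free $\underline{\bm{H}}(\mathbf{div};\mathbb{M})$-conforming tensors are represented row-wise as curls of a matrix (N\'ed\'elec-type) potential, and the map sending such a potential to the axial vector of the skew part of its curl is a first-order operator that one must show is surjective onto the admissible zero-mean skew fields with a bounded right inverse. This is precisely where the stability of $\bm{\mathcal{P}}^c_{k+2}$-$\mathcal{P}_{k+1}$ enters in vectorized form, and verifying this curl/skew identity together with the stable right inverse is the main obstacle. I would carry out the construction exactly as in \cite{Wang201sMixed,Gong2018New} and invoke it to obtain $\widetilde{\underline{\bm{\tau}}}_h\in\underline{\bm{\Sigma}}_h^{c}$ with $\mathbf{div}\,\widetilde{\underline{\bm{\tau}}}_h=0$ and $\|\widetilde{\underline{\bm{\tau}}}_h\|_0\lesssim\|\mathrm{as}(\widetilde{\underline{\bm{\sigma}}}_h)\|_0\lesssim\|\widetilde{\underline{\bm{\sigma}}}_h-\underline{\bm{\sigma}}_h\|_0$, which completes the proof.
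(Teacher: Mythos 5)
The paper does not actually prove this lemma---it is quoted from \cite{Wang201sMixed} with the remark that one symmetrizes $\widetilde{\underline{\bm{\sigma}}}_h$ ``by the similar argument in \cite{Gong2018New, Wang201sMixed}''---so there is no in-paper proof to compare against; your proposal is a reconstruction of the argument from those sources. The reconstruction is sound: the elementwise zero-mean compatibility of the skew part, obtained by testing \eqref{BDM_operator2_eq2} with rigid rotations (admissible since $k\geq n$), is exactly the hinge of the construction, and your two-dimensional argument---solve a discrete Stokes divergence problem for a potential $\bm{\phi}$ and take the row-wise curl, checking normal-trace continuity, the cancellation of the skew part, the vanishing of $\int_\Omega{\rm tr}(\widetilde{\underline{\bm{\tau}}}_h)$ via the homogeneous boundary condition, and the bound $\|\mathbf{curl}\,\bm{\phi}\|_0=|\bm{\phi}|_1$---is complete and correct. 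The three-dimensional case you defer to \cite{Wang201sMixed,Gong2018New}, which is the same level of detail the paper itself provides, so the only genuinely unproved ingredient (the surjectivity with bounded right inverse of the potential-to-skew-part map built from the vectorized Stokes pair in 3D) remains outsourced in both your write-up and the paper's.
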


Next, we begin to show the optimal $\underline{\bm{L}}^2$ error
estimate. In \cite{Hu2015A}, the conforming mixed element
$\underline{\bm{\mathcal{P}}}^c_{k+1}$-$\bm{\mathcal{P}}_k$ ($k\geq
n$) is constructed on simplicial grids. Moreover, when $k\geq n$,
there exists a projection $\Pi_h^c$ satisfying (\cite{Hu2015Finite})
\begin{subequations}\label{sec4_eq13}
	\begin{align}
	\label{sec4_eq13_1}
&(\bm{{\rm
    div}}(\underline{\bm{\tau}}-\Pi_h^c\underline{\bm{\tau}}),\bm{v}_h)
  = 0    &\forall \underline{\bm{\tau}} \in
  \underline{\bm{H}}^1(\Omega;\mathbb{S}), \forall \bm{v}_h \in
  \bm{V}_h,\\
	\label{sec4_eq13_2}
  &\|\underline{\bm{\tau}}-\Pi_h^c\underline{\bm{\tau}}\|_0 \lesssim
  h^{k+2}|\underline{\bm{\tau}}|_{k+2} &\forall \underline{\bm{\tau}}
  \in \underline{\bm{H}}^{k+2}(\Omega;\mathbb{S}).
	\end{align}
\end{subequations}

\begin{myTheorem}\label{sec4_Theorem2}
Let the solutions $(\underline{\bm{\sigma}},\bm{u}) \in
\underline{\bm{\Sigma}}\times\bm{H}^{1}(\Omega)$ and
$(\underline{\bm{\sigma}}_h,\bm{u}_h)$ be the solutions of MDG
problems \eqref{weakBrinkman3}. Under the condition of Lemma
\ref{sec4_Lemma4}, we have
\begin{equation}\label{sec4_eq11}
  \|\underline{\bm{\sigma}}^d-\underline{\bm{\sigma}}_h^d\|_0\lesssim
  h^{k+2} (|\underline{\bm \sigma}|_{k+2} + |\bm{u}|_{k+1}).
\end{equation}
Further, the following superconvergence holds
\begin{equation} \label{equ:superconvergence}
  \kappa^{-1/2} \| \bm{P}_h\bm{u} - \bm{u}_h\|_0 \lesssim h^{k+2}
  (|\underline{\bm \sigma}|_{k+2} + |\bm{u}|_{k+1}).
\end{equation}
\end{myTheorem}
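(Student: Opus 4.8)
The plan is to compare the discrete stress $\underline{\bm{\sigma}}_h$ with the conforming, symmetric $\underline{\bm{H}}(\mathbf{div})$ reconstruction $\underline{\bm{\sigma}}_{h,*} = \widetilde{\underline{\bm{\sigma}}}_h + \widetilde{\underline{\bm{\tau}}}_h \in \underline{\bm{\Sigma}}_h^c$ provided by Lemmas \ref{sec4_Lemma3} and \ref{sec4_Lemma4}, and then to compare $\underline{\bm{\sigma}}_{h,*}$ with the conforming projection $\Pi_h^c\underline{\bm{\sigma}}$ of \eqref{sec4_eq13} through a coupled energy identity. The first ingredient is a \emph{supercloseness} bound $\|\underline{\bm{\sigma}}_h - \underline{\bm{\sigma}}_{h,*}\|_0 \lesssim h^{k+2}(|\underline{\bm{\sigma}}|_{k+2} + |\bm{u}|_{k+1})$. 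Indeed, since $\widehat{\underline{\bm{\sigma}}}_h = \{\underline{\bm{\sigma}}_h\}$ on interior edges and $\widehat{\underline{\bm{\sigma}}}_h = \underline{\bm{\sigma}}_h$ on $\mathcal{E}_h^\partial$, one has $(\underline{\bm{\sigma}}_h - \widehat{\underline{\bm{\sigma}}}_h)\bm{n} = [\underline{\bm{\sigma}}_h]$ on each interior edge and $\bm{0}$ on the boundary, so squaring and summing \eqref{sec4_eq9.2} over $K$ gives $\|\underline{\bm{\sigma}}_h - \widetilde{\underline{\bm{\sigma}}}_h\|_0^2 \lesssim \sum_{e \in \mathcal{E}_h^i} h_e\|[\underline{\bm{\sigma}}_h]\|_e^2 \leq h^2 |\underline{\bm{\sigma}}_h|_*^2$. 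Together with $\|\widetilde{\underline{\bm{\tau}}}_h\|_0 \lesssim \|\underline{\bm{\sigma}}_h - \widetilde{\underline{\bm{\sigma}}}_h\|_0$ from \eqref{sec4_eq10} and $|\underline{\bm{\sigma}}_h|_* \lesssim h^{k+1}(|\underline{\bm{\sigma}}|_{k+2}+|\bm{u}|_{k+1})$ from Theorem \ref{thm:error2}, this yields the claim.

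Next I would record the two equations satisfied by $\underline{\bm{\sigma}}_{h,*}$. Integrating $\mathbf{div}\widetilde{\underline{\bm{\sigma}}}_h$ by parts element-wise, using \eqref{BDM_operator2_eq1}--\eqref{BDM_operator2_eq2} to replace the boundary flux by $\widehat{\underline{\bm{\sigma}}}_h\bm{n}$ and the volume term by $\underline{\bm{\sigma}}_h$, and invoking $\mathbf{div}\widetilde{\underline{\bm{\tau}}}_h = 0$ together with the symmetry of $\underline{\bm{\sigma}}_h$, the scheme \eqref{weakBrinkman1_eq2} gives the discrete divergence relation $(\mathbf{div}\underline{\bm{\sigma}}_{h,*}, \bm{v}_h) = \kappa^{-1}(\bm{u}_h, \bm{v}_h) - (\bm{f}, \bm{v}_h)$ for all $\bm{v}_h \in \bm{V}_h$. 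On the other hand, testing \eqref{sec3_eq3} with a conforming symmetric $\underline{\bm{\tau}}_h \in \underline{\bm{\Sigma}}_h^c \cap \underline{\bm{\Sigma}}_h$ annihilates the jump and penalty terms and leaves $\frac{1}{2}(\underline{\bm{\sigma}}_h^d, \underline{\bm{\tau}}_h^d) + (\mathbf{div}\underline{\bm{\tau}}_h, \bm{u}_h) = \langle \underline{\bm{\tau}}_h\bm{n}, \bm{g}\rangle_\Gamma$. Writing $A(\cdot,\cdot)$ for the continuous form \eqref{sec2_eq3} and subtracting these two identities from the continuous weak form restricted to the same conforming test pair, I obtain the Galerkin orthogonality
\[
A((\underline{\bm{\sigma}} - \underline{\bm{\sigma}}_{h,*}, \bm{u} - \bm{u}_h), (\underline{\bm{\tau}}_h, \bm{v}_h)) = -\frac{1}{2}(\underline{\bm{\sigma}}_{h,*}^d - \underline{\bm{\sigma}}_h^d, \underline{\bm{\tau}}_h^d),
\]
valid for all $(\underline{\bm{\tau}}_h, \bm{v}_h) \in \underline{\bm{\Sigma}}_h^c \times \bm{V}_h$ (both sides depend on $\underline{\bm{\tau}}_h$ only through $\underline{\bm{\tau}}_h^d$ and $\mathbf{div}\underline{\bm{\tau}}_h$, so the zero-trace-mean restriction may be dropped); its right-hand side is $O(h^{k+2})$ by the supercloseness bound.

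The heart of the proof is a coupled energy estimate. Setting $\underline{\bm{e}} = \Pi_h^c\underline{\bm{\sigma}} - \underline{\bm{\sigma}}_{h,*}$ and $\bm{e}_{\bm u} = \bm{P}_h\bm{u} - \bm{u}_h$, I take $\underline{\bm{\tau}}_h = \underline{\bm{e}}$, $\bm{v}_h = \bm{e}_{\bm u}$ in the orthogonality relation and split the first argument as $(\underline{\bm{\sigma}} - \Pi_h^c\underline{\bm{\sigma}}) + \underline{\bm{e}}$ and $(\bm{u} - \bm{P}_h\bm{u}) + \bm{e}_{\bm u}$. On the diagonal the off-diagonal $b$-contributions cancel, leaving the coercive quantity $A((\underline{\bm{e}}, \bm{e}_{\bm u}),(\underline{\bm{e}}, \bm{e}_{\bm u})) = \frac{1}{2}\|\underline{\bm{e}}^d\|_0^2 + \kappa^{-1}\|\bm{e}_{\bm u}\|_0^2$. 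The crucial point is that every projection-error cross term vanishes: $(\mathbf{div}\underline{\bm{e}}, \bm{u} - \bm{P}_h\bm{u})$ and $\kappa^{-1}(\bm{u} - \bm{P}_h\bm{u}, \bm{e}_{\bm u})$ drop by the $\bm{L}^2$-orthogonality \eqref{l2-operator} of $\bm{P}_h$ (note $\mathbf{div}\underline{\bm{e}} \in \bm{V}_h$ and $\bm{e}_{\bm u} \in \bm{V}_h$), while $(\mathbf{div}(\underline{\bm{\sigma}} - \Pi_h^c\underline{\bm{\sigma}}), \bm{e}_{\bm u})$ drops by the commuting property \eqref{sec4_eq13_1}. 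What survives on the right is only $\frac{1}{2}((\underline{\bm{\sigma}} - \Pi_h^c\underline{\bm{\sigma}})^d, \underline{\bm{e}}^d)$ plus the consistency defect, both bounded by $h^{k+2}(|\underline{\bm{\sigma}}|_{k+2}+|\bm{u}|_{k+1})\|\underline{\bm{e}}^d\|_0$ via \eqref{sec4_eq13_2}. Young's inequality then gives $\|\underline{\bm{e}}^d\|_0 + \kappa^{-1/2}\|\bm{e}_{\bm u}\|_0 \lesssim h^{k+2}(|\underline{\bm{\sigma}}|_{k+2}+|\bm{u}|_{k+1})$. A triangle inequality with \eqref{sec4_eq13_2} and the supercloseness bound delivers \eqref{sec4_eq11}, and the $\bm{e}_{\bm u}$-part is precisely \eqref{equ:superconvergence}.

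I expect the main obstacle to be controlling the velocity--stress coupling $\kappa$-robustly. Estimating $\|\underline{\bm{\sigma}}^d - \underline{\bm{\sigma}}_{h,*}^d\|_0$ in isolation forces the appearance of $(\mathbf{div}\underline{\bm{e}}, \bm{u} - \bm{u}_h) = \kappa^{-1}\|\bm{e}_{\bm u}\|_0^2$, which the earlier (sub-optimal) velocity estimate controls only at order $h^{2k+2}$ and with an adverse $\kappa^{-1}$ weight. The remedy --- and the reason \eqref{sec4_eq11} and \eqref{equ:superconvergence} must be established simultaneously --- is to assemble the deviatoric and divergence relations into the single form $A$, so that this term is absorbed into the left-hand coercivity $\frac{1}{2}\|\underline{\bm{e}}^d\|_0^2 + \kappa^{-1}\|\bm{e}_{\bm u}\|_0^2$ instead of surfacing as an uncontrolled residual.
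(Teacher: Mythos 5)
Your proposal is correct and follows essentially the same route as the paper: both hinge on the supercloseness bound $\|\underline{\bm{\sigma}}_h-\underline{\bm{\sigma}}_{h,*}\|_0\lesssim h\,|\underline{\bm{\sigma}}_h|_*\lesssim h^{k+2}(\cdot)$ from Lemmas \ref{sec4_Lemma3}--\ref{sec4_Lemma4} and Theorem \ref{thm:error2}, the discrete divergence relation \eqref{sec4_eq12} for the symmetrized flux, the commuting properties of $\Pi_h^c$ and $\bm{P}_h$, and the coupled stress--velocity energy identity obtained by testing with $\Pi_h^c\underline{\bm{\sigma}}-\underline{\bm{\sigma}}_{h,*}$ and $\bm{e}_{\bm u}$ so that the $b$-cross terms cancel and $\kappa^{-1}\|\bm{e}_{\bm u}\|_0^2$ is absorbed on the left. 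The only (cosmetic) difference is bookkeeping --- you package the two error equations as a single $A$-orthogonality with a consistency defect and coerce $\|\underline{\bm{e}}^d\|_0^2$ directly, whereas the paper's identity \eqref{sec4_eq15} pairs the true deviatoric error against the projected one and adds/subtracts afterwards; you are in fact slightly more careful on the immaterial zero-trace-mean constraint of the test function.
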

\begin{proof} 
According to \eqref{weakBrinkman1_eq2}, \eqref{BDM_operator2} and
Lemma \ref{sec4_Lemma3}, for any $\bm{v}_h \in \bm{V}_h$, we have
\begin{align*}
(\bm{f},\bm{v}_h)&=(\underline{\bm{\sigma}}_h,\underline{\bm{\varepsilon}}_h(\bm{v}_h))-
\langle\bm{\widehat{\underline\sigma}}_h\bm{n},\bm{v}_h\rangle_{\partial
  \mathcal {T}_h}+(\kappa^{-1}\bm{u}_h,\bm{v}_h)
=(\underline{\bm{\sigma}}_h,\underline{\bm{\nabla}}\bm{u}_h)-\langle\bm{\widehat{\underline{\sigma}}}_h\bm{n},\bm{v}_h\rangle_{\partial \mathcal {T}_h}+(\kappa^{-1}\bm{u}_h,\bm{v}_h)\\
&=(\widetilde{\underline{\bm{\sigma}}}_h,\underline{\bm{\nabla}}\bm{v}_h)
-\langle\widetilde{\underline{\bm{\sigma}}}_h\bm{n},\bm{v}_h\rangle_{\partial \mathcal {T}_h}+(\kappa^{-1}\bm{u}_h,\bm{v}_h)=-(\mathbf{div}\widetilde{\underline{\bm\sigma}}_h,\bm{v}_h)+(\kappa^{-1}\bm{u}_h,\bm{v}_h).
\end{align*}
Applying Lemma \ref{sec4_Lemma4}, there exist
$\widetilde{\underline{\bm{\tau}}}_h \in
\underline{\bm{\Sigma}}_h^{c}$ such that the symmetrized variable
$\underline{\bm{\sigma}}_{h,*}=\widetilde{\underline{\bm{\sigma}}}_h+\widetilde{\underline{\bm{\tau}}}_h$
is piecewise $\underline{\bm{\mathcal{P}}}^{\mathbb{M}}_{k+1}(K)$ and
$\underline{\bm{\sigma}}_{h,*} \in
\underline{\bm{H}}(\mathbf{div};\mathbb{S})$. Then,
\begin{equation}\label{sec4_eq12}
(\bm{{\rm div}}\underline{\bm{\sigma}}_{h,*},\bm{v}_h)
-(\kappa^{-1}\bm{u}_h,\bm{v}_h) = - (\bm{f},\bm{v}_h).
\end{equation}

 By \eqref{sec4_eq4}, \eqref{l2-operator}, \eqref{sec4_eq13_1} and
 \eqref{sec4_eq12}, it holds that
\begin{equation}\label{sec4_eq14}
(\bm{{\rm
 div}}(\Pi_h^c\underline{\bm{\sigma}}-\underline{\bm{\sigma}}_{h,*}),\bm{v}_h)
  - \kappa^{-1}(\bm{e}_{\bm u},\bm{v}_h)=0   \qquad \forall \bm{v}_h
  \in \bm{V}_h.
\end{equation}
Taking $\bm{v}_h=\bm{e}_{\bm u}$ in \eqref{sec4_eq14} and
$\underline{\bm{\tau}}_h=\Pi_h^c\underline{\bm{\sigma}}-\underline{\bm{\sigma}}_{h,*}$
in \eqref{sec4_eq3.1}, due to the $\underline{H}(\mathbf{div})$
conformity of
$\Pi_h^c\underline{\bm{\sigma}}-\underline{\bm{\sigma}}_{h,*}$, we
obtain
\begin{equation}\label{sec4_eq15}
\frac{1}{2}(\underline{\bm{\sigma}}^d-\underline{\bm{\sigma}}_h^d,\Pi_h^c\underline{\bm{\sigma}}^d-\underline{\bm{\sigma}}_{h,*}^d)
  + \kappa^{-1}\|\bm{e}_{\bm u}\|_0^2= 0,
\end{equation}
which implies that 
\begin{equation*}
\|\underline{\bm{\sigma}}^d-\underline{\bm{\sigma}}_h^d\|_0^2 +
2\kappa^{-1}\|\bm{e}_{\bm u}\|_0^2 \leq
(\underline{\bm{\sigma}}^d-\underline{\bm{\sigma}}_h^d,\underline{\bm{\sigma}}^d-\Pi_h^c\underline{\bm{\sigma}}^d)
  +(\underline{\bm{\sigma}}^d-\underline{\bm{\sigma}}_h^d,\underline{\bm{\sigma}}_{h,*}^d-\underline{\bm{\sigma}}_h^d).
\end{equation*}
Then, it holds
$$
\|\underline{\bm{\sigma}}^d-\underline{\bm{\sigma}}_h^d\|_0 +
\kappa^{-1/2}\|\bm{e}_{\bm u}\|_0 \lesssim
\|\underline{\bm{\sigma}}-\Pi_h^c\underline{\bm{\sigma}}\|_0+
\|\underline{\bm{\sigma}}_h-\underline{\bm{\sigma}}_{h,*}\|_0.
$$
Combining the above inequality with Lemma \ref{sec4_Lemma3}, Lemma
\ref{sec4_Lemma4} and \eqref{equ:error-sigma2}, we get
\begin{align*}
\|\underline{\bm{\sigma}}^d-\underline{\bm{\sigma}}_h^d\|_0 +
\kappa^{-1/2}\|\bm{e}_{\bm u}\|_0 &\lesssim
\|\underline{\bm{\sigma}}-\Pi_h^c\underline{\bm{\sigma}}\|_0+
\|\underline{\bm{\sigma}}_h-\underline{\bm{\sigma}}_{h,*}\|_0\\
&\lesssim \|\underline{\bm{\sigma}}-\Pi_h^c\underline{\bm{\sigma}}\|_0+\|\widetilde{\underline{\bm{\tau}}}_h\|_0+\|\widetilde{\underline{\bm{\sigma}}}_h-\underline{\bm{\sigma}}_h\|_0\\
&\lesssim
\|\underline{\bm{\sigma}}-\Pi_h^c\underline{\bm{\sigma}}\|_0+\sum_{e
  \in
    \mathcal{E}_h^i}h^{1/2}\|(\widehat{\underline{\bm{\sigma}}}_h-\underline{\bm{\sigma}}_h)\bm{n}\|_{e}\\
&\lesssim \|\underline{\bm{\sigma}}-\Pi_h^c\underline{\bm{\sigma}}\|_0 + h|\underline{\bm{\sigma}}_h|_* 
\lesssim h^{k+2}(|\underline{\bm \sigma}|_{k+2} + |\bm{u}|_{k+1}),
\end{align*}
which finishes the proof.
\end{proof}

\begin{myRemark}\label{sec4_Remark1}
For two dimensional case, when $k \geq 2$, the conforming mixed
element pair
$\underline{\bm{\mathcal{P}}}^c_{k+1}$-$\bm{\mathcal{P}}_k$ is
constructed in \cite{Hu2014A} and the Scott-Vogelius elements
$\bm{\mathcal{P}}^c_{k+2}$-$\mathcal{P}_{k+1}$ are stable
(cf. \cite{Scott1985Norm,Guzman2019The}). Therefore, the optimal
$\underline{\bm{L}}^2$ estimate for pseudostress holds when $k\geq 2$
in two dimensional case. 
\end{myRemark} 

\begin{myTheorem} \label{sec4_Theorem3}
Assume that the solution of \eqref{weakBrinkman} satisfies
$(\underline{\bm{\sigma}}, \bm{u})\in \underline{\bm{H}}^{k+2}(\Omega)
\times \bm{H}^{k+1}(\Omega)$.  Under the condition of Theorem
\ref{sec4_Theorem2}, the solution of the mixed DG problem
\eqref{weakBrinkman3} satisfies 
\begin{equation} \label{sec4_eq17} 
\|\mathrm{tr}(\underline{\bm \sigma} -
\underline{\bm{\sigma}}_h)\|_0 \lesssim \widetilde{\kappa}^{-1/2}
h^{k+2}(|\underline{\bm \sigma}|_{k+2} + |\bm{u}|_{k+1}).
\end{equation}
Further, we have the optimal error estimate for pressure $p$ in
$L^2$-norm
\begin{equation} \label{equ:L2-p} 
  \|p-p_h\|_0 \lesssim \widetilde{\kappa}^{-1/2}
  h^{k+2}(|\underline{\bm \sigma}|_{k+2} + |\bm{u}|_{k+1}).
\end{equation}
\end{myTheorem}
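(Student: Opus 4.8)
The plan is to establish the trace estimate $\|\mathrm{tr}(\underline{\bm{\sigma}} - \underline{\bm{\sigma}}_h)\|_0 \lesssim \widetilde{\kappa}^{-1/2}h^{k+2}(|\underline{\bm{\sigma}}|_{k+2}+|\bm{u}|_{k+1})$ first and then read off the pressure bound \eqref{equ:L2-p} for free, since the postprocessing formula \eqref{P} gives $p - p_h = -\frac{1}{n}\mathrm{tr}(\underline{\bm{\sigma}} - \underline{\bm{\sigma}}_h)$, whence $\|p - p_h\|_0 = \frac{1}{n}\|\mathrm{tr}(\underline{\bm{\sigma}} - \underline{\bm{\sigma}}_h)\|_0$. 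The naive route --- mimicking Theorem \ref{thm:error3} by bounding the trace through Lemma \ref{sec3_Lemma0} --- cannot succeed here, because that inequality involves $\|\mathbf{div}_h\underline{\bm{e}}_{\underline{\bm{\sigma}}}\|_0$ and $|\underline{\bm{\sigma}}_h|_*$, and both are genuinely only of order $h^{k+1}$ (the stress divergence and the interelement jumps do not superconverge). The crux of the argument is therefore to route the trace estimate through the symmetrized, $\underline{\bm{H}}(\mathbf{div};\mathbb{S})$-conforming objects built for Theorem \ref{sec4_Theorem2}, whose divergences differ only by the superconvergent quantity $\kappa^{-1}\bm{e}_{\bm{u}}$.

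Concretely, I would work with $\underline{\bm{\sigma}}_{h,*} = \widetilde{\underline{\bm{\sigma}}}_h + \widetilde{\underline{\bm{\tau}}}_h \in \underline{\bm{H}}(\mathbf{div};\mathbb{S})$ and the conforming projection $\Pi_h^c\underline{\bm{\sigma}}$, both symmetric and $\underline{\bm{H}}(\mathbf{div})$-conforming. Their divergences are piecewise $\mathcal{P}_k$ with continuous normal components, so $\mathbf{div}(\Pi_h^c\underline{\bm{\sigma}} - \underline{\bm{\sigma}}_{h,*}) \in \bm{V}_h$, and I may test the identity \eqref{sec4_eq14} with $\bm{v}_h = \mathbf{div}(\Pi_h^c\underline{\bm{\sigma}} - \underline{\bm{\sigma}}_{h,*})$ to obtain $\|\mathbf{div}(\Pi_h^c\underline{\bm{\sigma}} - \underline{\bm{\sigma}}_{h,*})\|_0 \leq \kappa^{-1}\|\bm{e}_{\bm{u}}\|_0$; the superconvergence \eqref{equ:superconvergence} together with $\kappa^{-1/2} \leq \widetilde{\kappa}^{-1/2}$ then gives $\|\mathbf{div}(\Pi_h^c\underline{\bm{\sigma}} - \underline{\bm{\sigma}}_{h,*})\|_0 \lesssim \widetilde{\kappa}^{-1/2}h^{k+2}(|\underline{\bm{\sigma}}|_{k+2}+|\bm{u}|_{k+1})$. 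Next I would apply the continuous stability bound $\|\underline{\bm{\tau}}\|_0^2 \lesssim \|\underline{\bm{\tau}}^d\|_0^2 + \|\mathbf{div}\underline{\bm{\tau}}\|_0^2 + (\int_\Omega \mathrm{tr}(\underline{\bm{\tau}})\,d\bm{x})^2$ to $\underline{\bm{\tau}} = \Pi_h^c\underline{\bm{\sigma}} - \underline{\bm{\sigma}}_{h,*}$: the deviatoric part is $\lesssim h^{k+2}$ through \eqref{sec4_eq13_2}, \eqref{sec4_eq11} and $\|\underline{\bm{\sigma}}_h - \underline{\bm{\sigma}}_{h,*}\|_0 \lesssim h^{k+2}$ (Lemmas \ref{sec4_Lemma3} and \ref{sec4_Lemma4}), the divergence by the previous step, and the mean-trace term by $|\int_\Omega \mathrm{tr}(\Pi_h^c\underline{\bm{\sigma}} - \underline{\bm{\sigma}}_{h,*})\,d\bm{x}| = |\int_\Omega \mathrm{tr}(\Pi_h^c\underline{\bm{\sigma}} - \underline{\bm{\sigma}})\,d\bm{x}| \lesssim \|\Pi_h^c\underline{\bm{\sigma}} - \underline{\bm{\sigma}}\|_0 \lesssim h^{k+2}$ (using $\int_\Omega \mathrm{tr}(\underline{\bm{\sigma}})\,d\bm{x} = 0$ and $\int_\Omega \mathrm{tr}(\underline{\bm{\sigma}}_{h,*})\,d\bm{x} = 0$). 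This yields $\|\Pi_h^c\underline{\bm{\sigma}} - \underline{\bm{\sigma}}_{h,*}\|_0 \lesssim \widetilde{\kappa}^{-1/2}h^{k+2}(|\underline{\bm{\sigma}}|_{k+2}+|\bm{u}|_{k+1})$, hence the same bound for its trace by \eqref{sec2_eq2_2}.

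Finally, the triangle inequality $\|\mathrm{tr}(\underline{\bm{\sigma}} - \underline{\bm{\sigma}}_h)\|_0 \leq \|\mathrm{tr}(\underline{\bm{\sigma}} - \Pi_h^c\underline{\bm{\sigma}})\|_0 + \|\mathrm{tr}(\Pi_h^c\underline{\bm{\sigma}} - \underline{\bm{\sigma}}_{h,*})\|_0 + \|\mathrm{tr}(\underline{\bm{\sigma}}_{h,*} - \underline{\bm{\sigma}}_h)\|_0$ closes the estimate: the first term is $\lesssim h^{k+2}$ by \eqref{sec4_eq13_2} and \eqref{sec2_eq2_2}, the middle one by the previous paragraph, and the last by $\|\underline{\bm{\sigma}}_{h,*} - \underline{\bm{\sigma}}_h\|_0 \lesssim h^{k+2}$, establishing \eqref{sec4_eq17}; dividing by $n$ gives \eqref{equ:L2-p}. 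I expect the only delicate point to be the bookkeeping confirming that every contribution carries the factor $\widetilde{\kappa}^{-1/2}h^{k+2}$ with a $\kappa$-independent constant --- the powers of $\widetilde{\kappa}$ enter solely through the divergence term via \eqref{equ:superconvergence}, while all the remaining pieces (the projection errors and the symmetrization defect $\|\underline{\bm{\sigma}}_h - \underline{\bm{\sigma}}_{h,*}\|_0$) are $\kappa$-independent.
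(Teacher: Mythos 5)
Your proposal follows essentially the same route as the paper: test \eqref{sec4_eq14} with $\mathbf{div}(\Pi_h^c\underline{\bm{\sigma}}-\underline{\bm{\sigma}}_{h,*})$ to get the divergence bound from the superconvergence \eqref{equ:superconvergence}, control the deviatoric part and the symmetrization defect $\|\underline{\bm{\sigma}}_h-\underline{\bm{\sigma}}_{h,*}\|_0$ at order $h^{k+2}$, recover the full $\underline{\bm{L}}^2$ norm of the conforming difference from its deviatoric and divergence parts, and conclude by the triangle inequality and the postprocessing formula \eqref{P}. The only (cosmetic) difference is that you invoke the continuous stability inequality with an explicit mean-trace term where the paper applies Lemma \ref{sec3_Lemma0} to $\Pi_h^c\underline{\bm{\sigma}}-\underline{\bm{\sigma}}_{h,*}\in\underline{\bm{\Sigma}}_h\cap\underline{\bm{H}}(\mathbf{div},\Omega;\mathbb{S})$; your version handles the mean-trace normalization of $\Pi_h^c\underline{\bm{\sigma}}$ slightly more carefully, but the argument is the same.
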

\begin{proof} Taking $\bm{v}_h=\bm{{\rm
div}}_h(\Pi_h^c\underline{\bm{\sigma}}-\underline{\bm{\sigma}}_{h,*})$
in \eqref{sec4_eq14}, by the superconvergence result in
\eqref{equ:superconvergence}, we obtain
	\begin{equation}\label{sec4_eq18}
\|\bm{{\rm
div}}_h(\Pi_h^c\underline{\bm{\sigma}}-\underline{\bm{\sigma}}_{h,*})\|_0 \lesssim {\kappa}^{-1}\|\bm{e}_{\bm u}\|_0 \lesssim\widetilde{\kappa}^{-1/2}
h^{k+2}(|\underline{\bm \sigma}|_{k+2} + |\bm{u}|_{k+1}).
	\end{equation}
Here, we use the fact that $\kappa^{-1} \leq \widetilde{\kappa}^{-1}$.
Again, by Lemma \ref{sec4_Lemma4}, Lemma \ref{sec4_Lemma3} and Theorem
\ref{thm:error2}, we have 
\begin{equation*}
  \|\bm{\sigma}_h-\underline{\bm{\sigma}}_{h,*}\|_0 \lesssim\sum_{e
    \in\mathcal{E}_h^i}h_e^{1/2}\|(\widehat{\underline{\bm{\sigma}}}_h-\underline{\bm{\sigma}}_h)\bm{n}\|_{e}
    \leq h|\underline{\bm{\sigma}}_h|_* \lesssim
    h^{k+2}(|\underline{\bm \sigma}|_{k+2} + |\bm{u}|_{k+1}),
	\end{equation*}
whence, by \eqref{sec4_eq13_2}, \eqref{sec4_eq11} and \eqref{sec4_eq18},
$$
\begin{aligned}
  \|\Pi_h^c\underline{\bm{\sigma}}^d-\underline{\bm{\sigma}}_{h,*}^d\|_0
  &\leq \|\Pi_h^c\underline{\bm\sigma}^d - \underline{\bm\sigma}^d\|_0
  + \| \underline{\bm\sigma}^d - \underline{\bm\sigma}_h^d\|_0 +
  \|\underline{\bm\sigma}_h^d - \underline{\bm\sigma}_{h,*}^d\|_0 \\
  &\leq \|\Pi_h^c\underline{\bm\sigma} - \underline{\bm\sigma}\|_0
  + \| \underline{\bm\sigma}^d - \underline{\bm\sigma}_h^d\|_0 +
  \|\underline{\bm\sigma}_h - \underline{\bm\sigma}_{h,*}\|_0 
  \lesssim h^{k+2}(|\underline{\bm \sigma}|_{k+2} + |\bm{u}|_{k+1}).
\end{aligned}
$$ 
Note here that
$\Pi_h^c\underline{\bm{\sigma}}-\underline{\bm{\sigma}}_{h,*} \in
\underline{\bm\Sigma}_h \cap \underline{\bm H}(\mathbf{div},\Omega;
\mathbb{S})$, using Lemma \ref{sec3_Lemma0}, \eqref{sec4_eq18}, one
gets
\begin{equation*}
\|\Pi_h^c\underline{\bm{\sigma}}-\underline{\bm{\sigma}}_{h,*}\|_0 
\lesssim
\|\Pi_h^c\underline{\bm{\sigma}}^d-\underline{\bm{\sigma}}_{h,*}^d\|_0 
+\|\bm{{\rm
div}}_h(\Pi_h^c\underline{\bm{\sigma}}-\underline{\bm{\sigma}}_{h,*})\|_0
\lesssim \widetilde{\kappa}^{-1/2}h^{k+2}(|\underline{\bm
\sigma}|_{k+2} + |\bm{u}|_{k+1}),
	\end{equation*}
which yields
$$
\|\underline{\bm\sigma}
-\underline{\bm\sigma}_{h}\|_0\leq\|\underline{\bm{\sigma}}-\Pi_h^c\underline{\bm{\sigma}}\|_0+\|\Pi_h^c\underline{\bm{\sigma}}-\underline{\bm{\sigma}}_{h,*}\|_0+
\|\underline{\bm{\sigma}}_{h,*}-\bm{\sigma}_h\|_0 \lesssim
\widetilde{\kappa}^{-1/2}h^{k+2}(|\underline{\bm \sigma}|_{k+2} +
|\bm{u}|_{k+1}).
$$
Note that \eqref{P}, we can define the numerical solution of pressure
by the postprocessed approximation $p_h = -\frac{1}{n}{\rm
tr}(\underline{\bm{\sigma}}_h)$.  The optimal $L^2$ estimate for
pressure \eqref{equ:L2-p} then follows from the fact that $\|p -
p_h\|_0 \lesssim \|\mathrm{tr}(\underline{\bm\sigma} -
\underline{\bm\sigma}_h)\|_0$. 
\end{proof}
	

\section{Numerical examples}\label{numericalexamples}
In this section, we present some numerical results to illustrate the
reliability, accuracy, and flexibility of the MDG method
\eqref{weakBrinkman4}. The numerical results presented below are
obtained by using Fenics software (cf. \cite{Langtangen2016Solving}).
For simplicity, we consider the triangular meshes in the two-dimensional
case and the discontinuous Galerkin pair
$\underline{\bm{\mathcal{P}}}^{\mathbb{S}}_{k+1}$-$\bm{\mathcal{P}}_{k}$
for all the numerical examples.

Example \ref{example_1} is employed to illustrate the performance of
the MDG scheme \eqref{weakBrinkman4} for different permeability with
the polynomial degrees $k = 0,1,2$.  For the variable permeability,
Example \ref{example_2} is used to test the accuracy of the MDG scheme
\eqref{weakBrinkman4} with different viscosity. Example
\ref{example_3} and Example \ref{example_4} are utilized to show the
behavior of MDG scheme \eqref{weakBrinkman4} for the Brinkman problem
in a region with different contrast permeability.

\begin{myExam}\label{example_1} Consider the steady Brinkman problem
\eqref{Brinkman} in a square domain $(0,1)\times(0,1)$ with a
homogeneous boundary condition that $\bm{u}=\bm{0}$ on $\Gamma$. The
right hand side function $\bm{f}$  and the exact stress function
$\underline{\bm{\sigma}}$ are selected such that the exact solution is
given by
\begin{align*}
\left\{\begin {array}{lll}
u_1(x,y,t) = x^2(x-1)^2y(y-1)(2y-1),\\
u_2(x,y,t) = -x(x-1)(2x-1)y^2(y-1)^2,\\
p(x,y,t) = (2x-1)(2y-1).
\end{array}\right.
\end{align*}
\end{myExam}

This example aims at testing the accuracy and reliability of the MDG
method for fixed viscosity and different permeability. Set $1/h=4$,
$8$, $16$, $32$ and $\nu=1$. We compute the numerical solutions
$(\underline{\bm{\sigma}}_h,\bm{u}_h)$ on uniform meshes with
$\kappa^{-1}=10^{-3},10^0$, and $10^3$. The numerical results of
$\|\bm{u}-\bm{u}_h\|_0$,
$\|\underline{\bm{\sigma}}-\underline{\bm{\sigma}}_h\|_{\underline{\bm{\Sigma}}_h}$,
$\|\underline{\bm{\sigma}}-\underline{\bm{\sigma}}_h\|_0$ and
$\|p-p_h\|_0$ for finite element pairs
$\underline{\bm{\mathcal{P}}}^{\mathbb{S}}_{k+1}$-$\bm{\mathcal{P}}_{k}$
$(k=0,1 {\rm \ and\ } 2)$ are given in Table \ref{tab1}--Table
\ref{tab3}, respectively. The numerical results confirm the optimal
convergence orders, which are consistent with the theoretical results
developed in Section \ref{errorestimation}.  We can see that the MDG
scheme is very stable with respected to different permeability.

\begin{table}[h!]
	\begin{center}
		\def\temptablewidth{1.0\textwidth}
		{\rule{\temptablewidth}{0.8pt}}
 \begin{tabular*}{\temptablewidth}{@{\extracolsep{\fill}}cccccccccc}
			$\kappa^{-1}$	& $1/h$
			& $\|\bm{u}-\bm{u}_h\|_0$                   &  Order
			& $\|\underline{\bm{\sigma}}-\underline{\bm{\sigma}}_h\|_{\underline{\bm{\Sigma}}_h}$                   &  Order
			& $\|\underline{\bm{\sigma}}-\underline{\bm{\sigma}}_h\|_0$    &  Order
			& $\|p-p_h\|_0$                   &  Order\\
            \hline
			\multirow{4}{*}{$10^{-3}$}
			&4&3.20004e-03&---& 4.86047e-01&---& 1.08555e-01&---& 4.02386e-02&--- \\   
			&8&1.66698e-03&0.94&2.49013e-01&0.96&4.91672e-02&1.14&1.76784e-02&1.19\\
			&16&8.40052e-04&0.99&1.25359e-01&0.99&2.36517e-02&1.06&8.40333e-03&1.07\\
			&32&4.20839e-04&1.00&6.27953e-02&1.00&1.16891e-02&1.02&4.13811e-03&1.02\\
		   \hline
           \multirow{4}{*}{$10^{0}$}
           &4&3.18199e-03&---& 4.83635e-01&---& 1.08124e-01&---& 4.01034e-02&--- \\   
           &8&1.65770e-03&0.94&2.47759e-01&0.96&4.88790e-02&1.15&1.75800e-02&1.19\\
           &16&8.35424e-04&0.99&1.24732e-01&0.99&2.34947e-02&1.06&8.34834e-03&1.07\\
           &32&4.18528e-04&1.00&6.24823e-02&1.00&1.16088e-02&1.02&4.10978e-03&1.02\\
		   \hline
           \multirow{4}{*}{$10^{3}$}
           &4&1.54374e-03&---& 7.99766e-02&---& 7.95933e-02&---& 3.23179e-02&--- \\   
           &8&8.38782e-04&0.88&2.27374e-02&1.81&2.24375e-02&1.83&9.01438e-03&1.84\\
           &16&4.27995e-04&0.97&6.78529e-03&1.74&6.51937e-03&1.78&2.53637e-03&1.83\\
           &32&2.15076e-04&0.99&2.51625e-03&1.43&2.32453e-03&1.49&8.61808e-04&1.56\\
		\end{tabular*}
		{\rule{\temptablewidth}{1pt}}
	\end{center}
\vspace*{-20pt}
	\caption{ Numerical errors and orders in Example \ref{example_1} for $\underline{\bm{\mathcal{P}}}^{\mathbb{S}}_{1}$-$\bm{\mathcal{P}}_{0}$ element}\label{tab1}
\end{table}

\begin{table}[h!]
	\begin{center}
		\def\temptablewidth{1.0\textwidth}
		{\rule{\temptablewidth}{0.8pt}}
		 \begin{tabular*}{\temptablewidth}{@{\extracolsep{\fill}}cccccccccc}
			$\kappa^{-1}$	& $1/h$
			& $\|\bm{u}-\bm{u}_h\|_0$                   &  Order
			& $\|\underline{\bm{\sigma}}-\underline{\bm{\sigma}}_h\|_{\underline{\bm{\Sigma}}_h}$                   &  Order
			& $\|\underline{\bm{\sigma}}-\underline{\bm{\sigma}}_h\|_0$    &  Order
			& $\|p-p_h\|_0$                   &  Order\\
			\hline
			\multirow{4}{*}{$10^{-3}$}
			&4&4.36672e-04&---& 3.90029e-02&---& 3.16881e-03&---& 1.13778e-03&--- \\   
			&8&1.15747e-04&1.92&1.12384e-02&1.80&5.28805e-04&2.58&1.79899e-04&2.66\\
			&16&2.93956e-05&1.98&2.99769e-03&1.91&9.40039e-05&2.49&3.16202e-05&2.51\\
			&32&7.37888e-06&1.99&7.68566e-04&1.96&2.02008e-05&2.22&6.96908e-06&2.18\\
			\hline
			\multirow{4}{*}{$10^{0}$}
			&4&4.36637e-04&---& 3.89813e-02&---& 3.16559e-03&---& 1.13689e-03&--- \\   
			&8&1.15745e-04&1.92&1.12359e-02&1.79&5.28345e-04&2.58&1.79751e-04&2.66\\
			&16&2.93954e-05&1.98&2.99735e-03&1.91&9.38787e-05&2.49&3.15755e-05&2.51\\
			&32&7.37884e-06&1.99&7.68503e-04&1.96&2.01643e-05&2.22&6.95601e-06&2.18\\
			\hline
			\multirow{4}{*}{$10^{3}$}
			&4&4.34313e-04&---& 2.87962e-03&---& 2.40618e-03&---& 9.05464e-04&--- \\   
			&8&1.15593e-04&1.91&6.22572e-04&2.21&3.97226e-04&2.60&1.36738e-04&2.73\\
			&16&2.93699e-05&1.98&1.45246e-04&2.10&6.23008e-05&2.67&2.03260e-05&2.75\\
			&32&7.37266e-06&1.99&3.55679e-05&2.03&1.05275e-05&2.57&3.47418e-06&2.55\\
		\end{tabular*}
	{\rule{\temptablewidth}{1pt}}
\end{center}
\vspace*{-20pt}
\caption{ Numerical errors and orders in  Example \ref{example_1} for $\underline{\bm{\mathcal{P}}}^{\mathbb{S}}_{2}$-$\bm{\mathcal{P}}_{1}$ element}\label{tab2}
\end{table}

\begin{table}[h!]
	\begin{center}
		\def\temptablewidth{1.0\textwidth}
		{\rule{\temptablewidth}{0.8pt}}
		 \begin{tabular*}{\temptablewidth}{@{\extracolsep{\fill}}cccccccccc}
			$\kappa^{-1}$	& $1/h$
			& $\|\bm{u}-\bm{u}_h\|_0$                   &  Order
			& $\|\underline{\bm{\sigma}}-\underline{\bm{\sigma}}_h\|_{\underline{\bm{\Sigma}}_h}$                   &  Order
			& $\|\underline{\bm{\sigma}}-\underline{\bm{\sigma}}_h\|_0$    &  Order
			& $\|p-p_h\|_0$                   &  Order\\
			\hline
			\multirow{4}{*}{$10^{-3}$}
			&4&7.56819e-05&---& 8.64697e-03&---& 3.81029e-04&---& 1.31551e-04&---\\
			&8&1.00692e-05&2.91&1.15393e-03&2.91&3.11168e-05&3.61&1.07342e-05&3.62\\
			&16&1.28159e-06&2.97&1.44753e-04&2.99&2.23659e-06&3.80&7.69790e-07&3.80\\
			&32&1.60948e-07&2.99&1.79936e-05&3.01&1.49415e-07&3.90&5.13626e-08&3.91\\
			\hline
			\multirow{4}{*}{$10^{0}$}
			&4&7.56816e-05&---& 8.64553e-03&---& 3.80925e-04&---& 1.31524e-04&--- \\   
			&8&1.00692e-05&2.91&1.15385e-03&2.91&3.11128e-05&3.61&1.07330e-05&3.62\\
			&16&1.28159e-06&2.97&1.44749e-04&2.99&2.23650e-06&3.80&7.69761e-07&3.80\\
			&32&1.60948e-07&2.99&1.79934e-05&3.01&1.49413e-07&3.90&5.13620e-08&3.91\\
			\hline
			\multirow{4}{*}{$10^{3}$}
			&4&7.56400e-05&---& 4.76787e-04&---& 3.31906e-04&---& 1.18158e-04&--- \\   
			&8&1.00688e-05&2.91&5.54499e-05&3.10&2.81247e-05&3.56&9.81496e-06&3.59\\
			&16&1.28159e-06&2.97&6.58331e-06&3.07&2.15065e-06&3.71&7.42738e-07&3.72\\
			&32&1.60948e-07&2.99& 8.02772e-07&3.04& 1.47697e-07&3.86& 5.08168e-08&3.87\\
		\end{tabular*}
		{\rule{\temptablewidth}{1pt}}
	\end{center}
	\vspace*{-20pt}
	\caption{ Numerical errors and orders in  Example \ref{example_1} for $\underline{\bm{\mathcal{P}}}^{\mathbb{S}}_{3}$-$\bm{\mathcal{P}}_{2}$ element}\label{tab3}
\end{table}

\begin{myExam}\label{example_2} In this example, choose
$\Omega=(0,1)\times(0,1)$ for the steady Brinkman problem
\eqref{Brinkman}. The right hand side function $\bm{f}$, the exact
stress function $\underline{\bm{\sigma}}$ and boundary condition
$\bm{g}$ are selected such that the exact solution is given by
\begin{align*}
\left\{\begin {array}{lll}
u_1(x,y,t) = 2\sin(\pi x)^2\sin(\pi y)\cos(\pi y),\\
u_2(x,y,t) = -2\sin(\pi y)^2\sin(\pi x)\cos(\pi x),\\
p(x,y,t) = \cos(\pi x)\cos(\pi y).
\end{array}\right.
\end{align*}
\end{myExam}

This example aims at testing the accuracy and reliability of the MDG
method for different viscosity and variable permeability. The MDG
finite element pair
$\underline{\bm{\mathcal{P}}}^{\mathbb{S}}_{2}$-$\bm{\mathcal{P}}_{1}$
is employed in the numerical discretization on uniform meshes. The
parameter $\kappa^{-1} = 1000(\sin(\pi x)+1.1)$ and set $1/h=4$, $8$,
$16$, $32$. Then, for $\nu = 10^{-2}$, $10^{-1}$ and $10^{0}$, we
present the numerical results of $\|\bm{u}-\bm{u}_h\|_0$,
$\|\underline{\bm{\sigma}}-\underline{\bm{\sigma}}_h\|_{\underline{\bm{\Sigma}}_h}$,
$\|\underline{\bm{\sigma}}-\underline{\bm{\sigma}}_h\|_0$ and
$\|p-p_h\|_0$ in Table \ref{tab4}.

\begin{table}[h!]
	\begin{center}
		\def\temptablewidth{1.0\textwidth}
		{\rule{\temptablewidth}{0.8pt}}
		 \begin{tabular*}{\temptablewidth}{@{\extracolsep{\fill}}cccccccccc}
			$\nu$	& $1/h$
			& $\|\bm{u}-\bm{u}_h\|_0$                   &  Order
			& $\|\underline{\bm{\sigma}}-\underline{\bm{\sigma}}_h\|_{\underline{\bm{\Sigma}}_h}$                   &  Order
			& $\|\underline{\bm{\sigma}}-\underline{\bm{\sigma}}_h\|_0$    &  Order
			& $\|p-p_h\|_0$                   &  Order\\
			\hline
			\multirow{4}{*}{$10^{-2}$}
			&4& 6.31075e-02&---&  2.64131e-01&---&  2.82987e-02&---&  1.09014e-02&---\\
			&8& 1.65010e-02&1.94& 8.51447e-02&1.63& 5.24654e-03&2.43& 1.91774e-03&2.51\\
			&16&4.15519e-03&1.99& 2.20791e-02&1.95& 7.67132e-04&2.77& 2.65942e-04&2.85\\
			&32&1.04098e-03&2.00& 5.57152e-03&1.99& 1.02641e-04&2.90& 3.45945e-05&2.94\\
			\hline
			\multirow{4}{*}{$10^{-1}$}
			&4& 6.22748e-02&---&  6.62081e-01&---&  7.67147e-02&---&  3.00923e-02&---\\
			&8& 1.63746e-02&1.93& 1.57909e-01&2.07& 1.01413e-02&2.92& 3.77929e-03&2.99\\
			&16&4.14629e-03&1.98& 3.98776e-02&1.99& 1.25819e-03&3.01& 4.36425e-04&3.11\\
			&32&1.03994e-03&2.00& 1.01836e-02&1.97& 1.58246e-04&2.99& 5.24614e-05&3.06\\
			\hline
			\multirow{4}{*}{$10^{0}$}
			&4& 6.22280e-02&---&  5.31556e+00&---&  5.76297e-01&---&  2.29330e-01&---\\
			&8& 1.63708e-02&1.93& 1.27720e+00&2.06& 6.42025e-02&3.17& 2.40393e-02&3.25\\
			&16&4.14614e-03&1.98& 3.22914e-01&1.98& 7.49026e-03&3.10& 2.53180e-03&3.25\\
			&32&1.03993e-03&2.00& 8.12870e-02&1.99& 9.69215e-04&2.95& 3.08775e-04&3.04\\
		\end{tabular*}
		{\rule{\temptablewidth}{1pt}}
	\end{center}
	\vspace*{-20pt}
	\caption{ Numerical errors and orders in  Example \ref{example_2} for $\underline{\bm{\mathcal{P}}}^{\mathbb{S}}_{2}$-$\bm{\mathcal{P}}_{1}$ element}\label{tab4}
\end{table}

\begin{figure}[!ht]
	\begin{center}
    	\subfigure[]{
		    \label{bar_domain}
		    \centering
		    \includegraphics[width=2.0in]{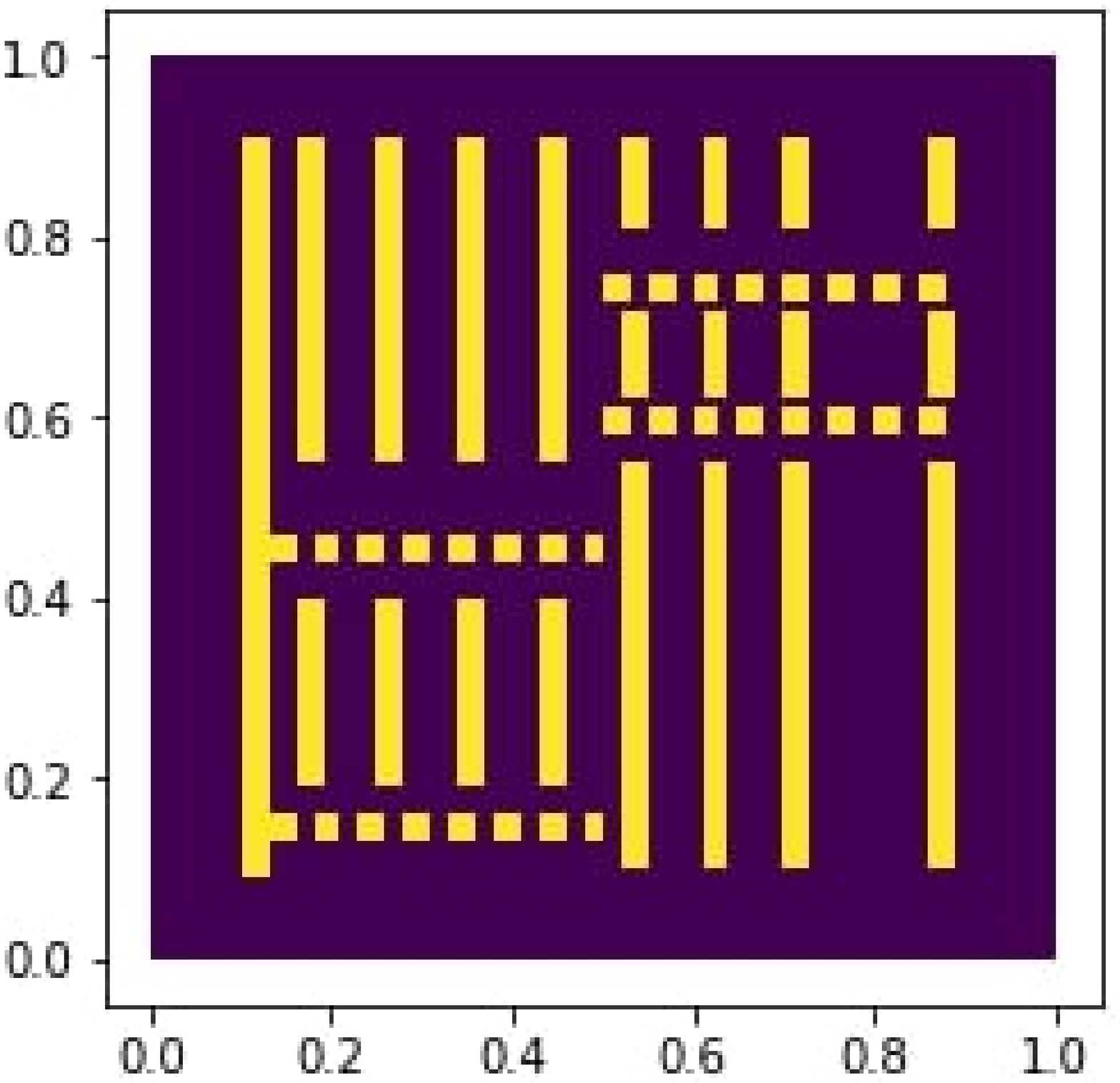}
     	}
    	\subfigure[]{
	    	\label{vuggy_domain}
	    	\centering
	    	\includegraphics[width=2.0in]{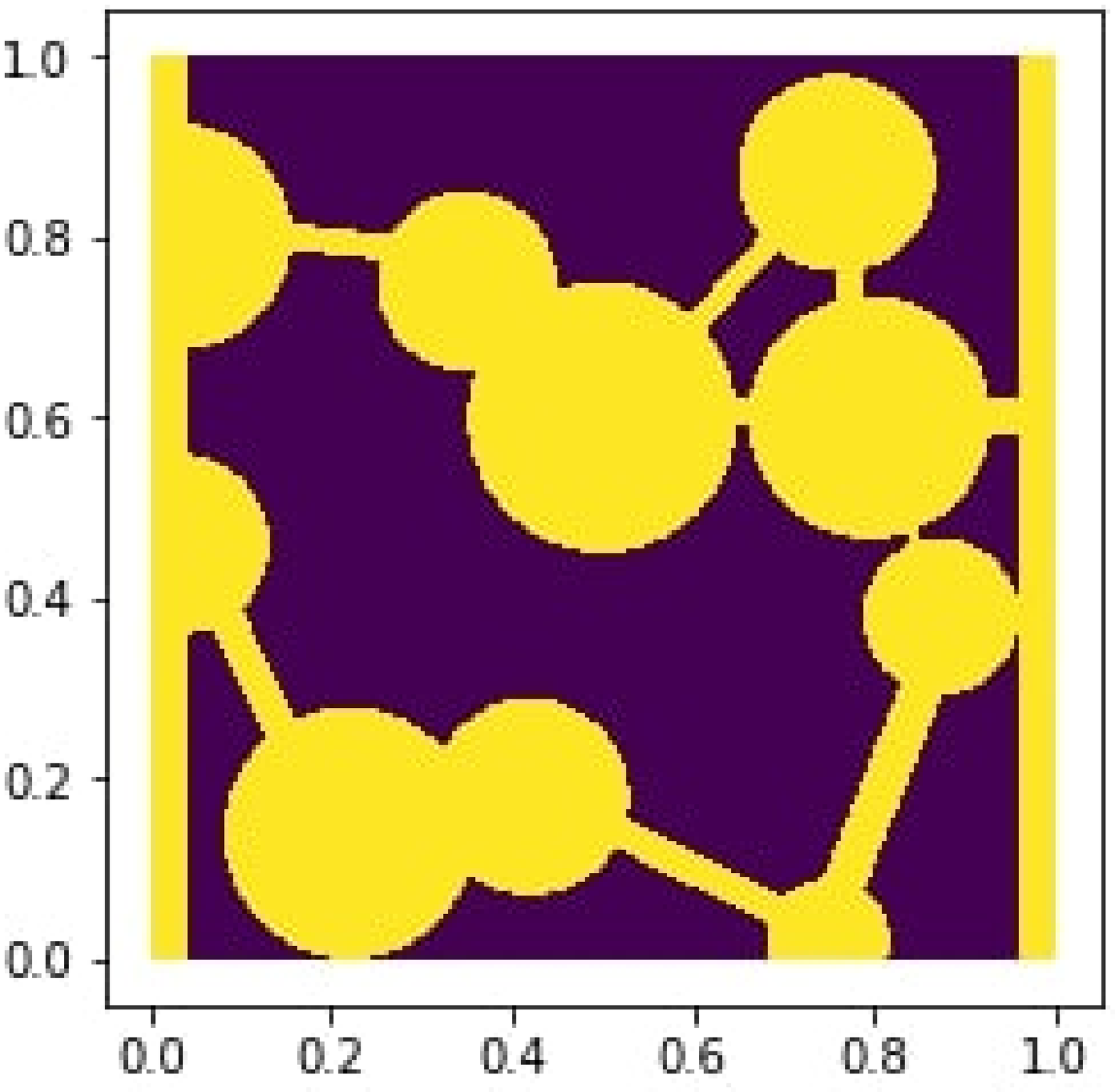}
    	}
	\end{center}
	\vspace*{-15pt}
	\caption{(a)The profile of $\kappa^{-1}$ in Example \ref{example_3}; (b) The profile of $\kappa^{-1}$ in Example \ref{example_4}.}
\end{figure}

Examples \ref{example_3} and \ref{example_4} do not have analytical
solutions, so we do not list the convergence order as shown in the
first two examples. We mention that the similar test of the profile of
$\kappa^{-1}$ can be found in other literature
\cite{Iliev_2011_Variational,Lin2014A}. In the following two examples,
a mesh $100\times 100$ is used and the data setting is designed as
follows:  $\Omega=(0,1)\times(0,1)$, $\bm{f} = \bm{0}$, $\nu =10^{-2}$
and $\bm{g}=(1,0)^t$.  The Brinkman problems \eqref{weakBrinkman4} are
solved in a region with different contrast permeability.

\begin{myExam}\label{example_3} For this test case, the profile of
	$\kappa^{-1}$ is plotted in Figure \ref{bar_domain} with
	$\kappa^{-1}=10,10^3, 10^5$ in yellow region and $\kappa^{-1}=1$ in
	the purple region. (\cite{Lin2014A}).
\end{myExam}

For $\kappa^{-1}=10, 10^3, 10^5$ in the yellow region and
$\kappa^{-1}=1$ in purple region of \ref{bar_domain}, the first and
the second components of the velocity obtained by MDG method with
$\underline{\bm{\mathcal{P}}}^{\mathbb{S}}_{2}$-$\bm{\mathcal{P}}_{1}$
element are presented in Figure \ref{bar_u1} and Figure \ref{bar_u2},
respectively.  The stress intensity and pressure profiles are showed
in Figure \ref{bar_s} and Figure \ref{bar_p}.

\begin{figure}[!ht]
	\begin{center}
		\subfigure[$\kappa^{-1}=10$ in the yellow region]{
			\label{bar1_velocity1}
			\centering
			\includegraphics[width=2.08in]{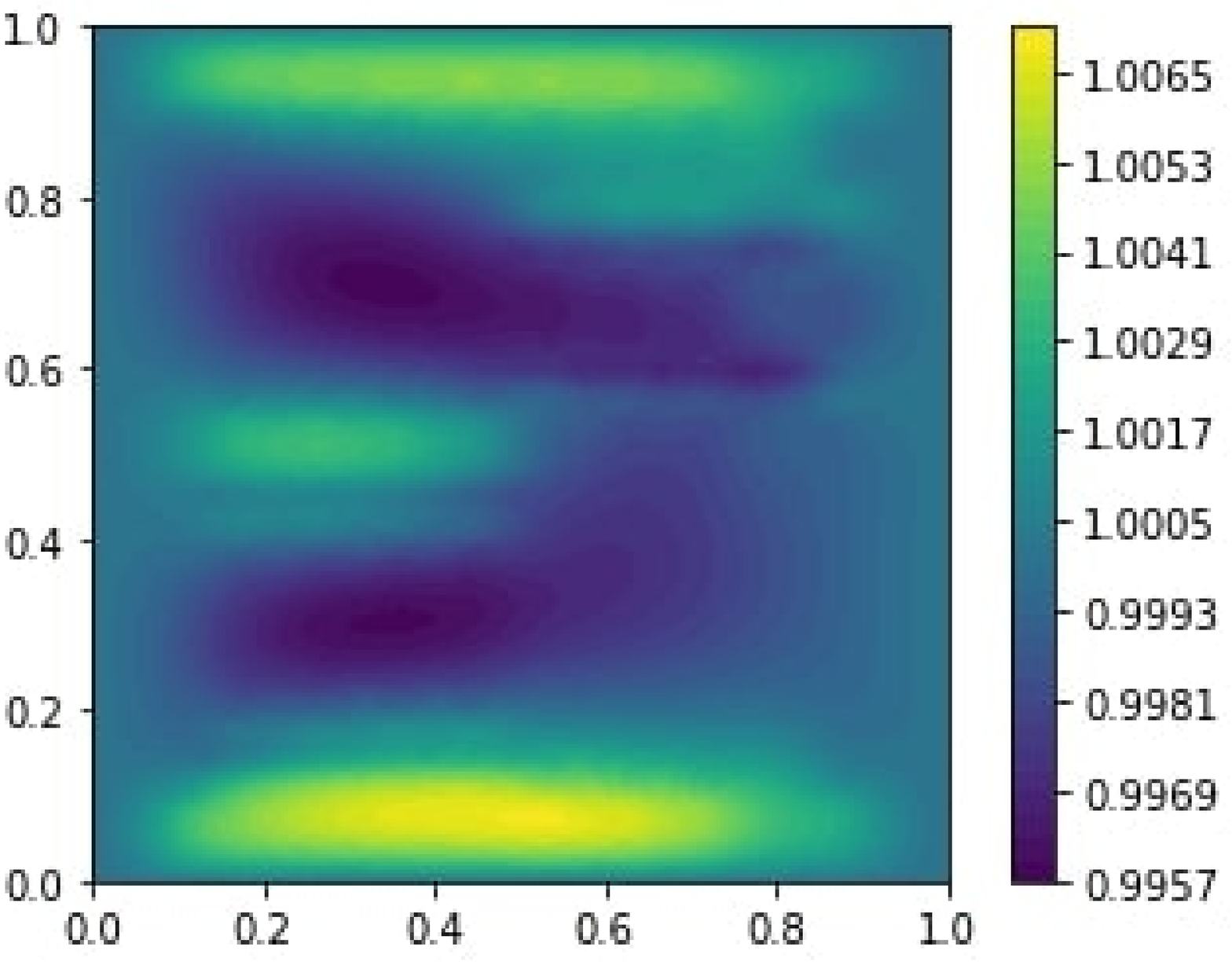}
		}
		\subfigure[$\kappa^{-1}=10^3$ in the yellow region]{
			\label{bar3_velocity1}
			\centering
			\includegraphics[width=2.0in]{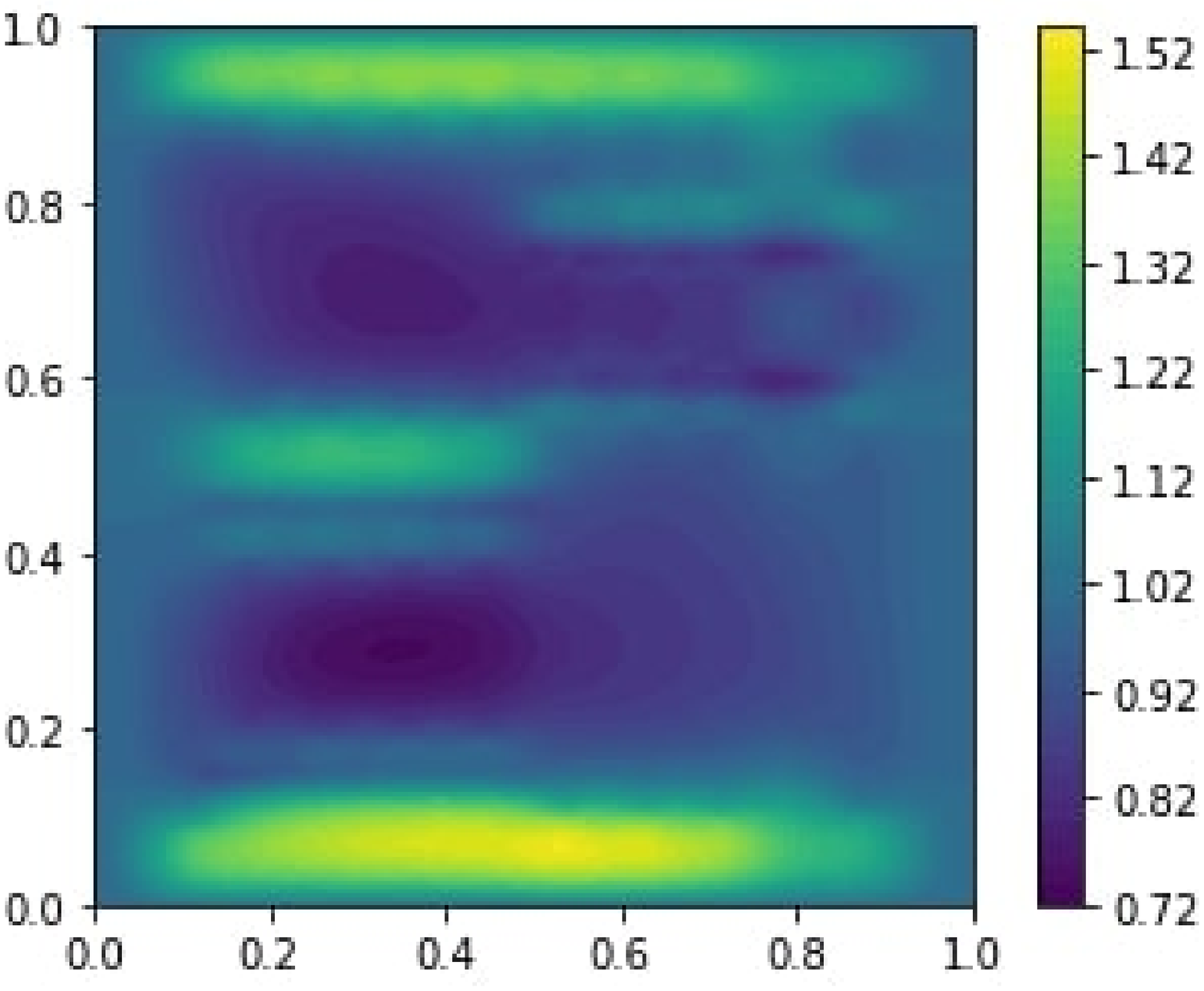}
		}
		\subfigure[$\kappa^{-1}=10^5$ in the yellow region]{
			\label{bar5_velocity1}
			\centering
			\includegraphics[width=1.98in]{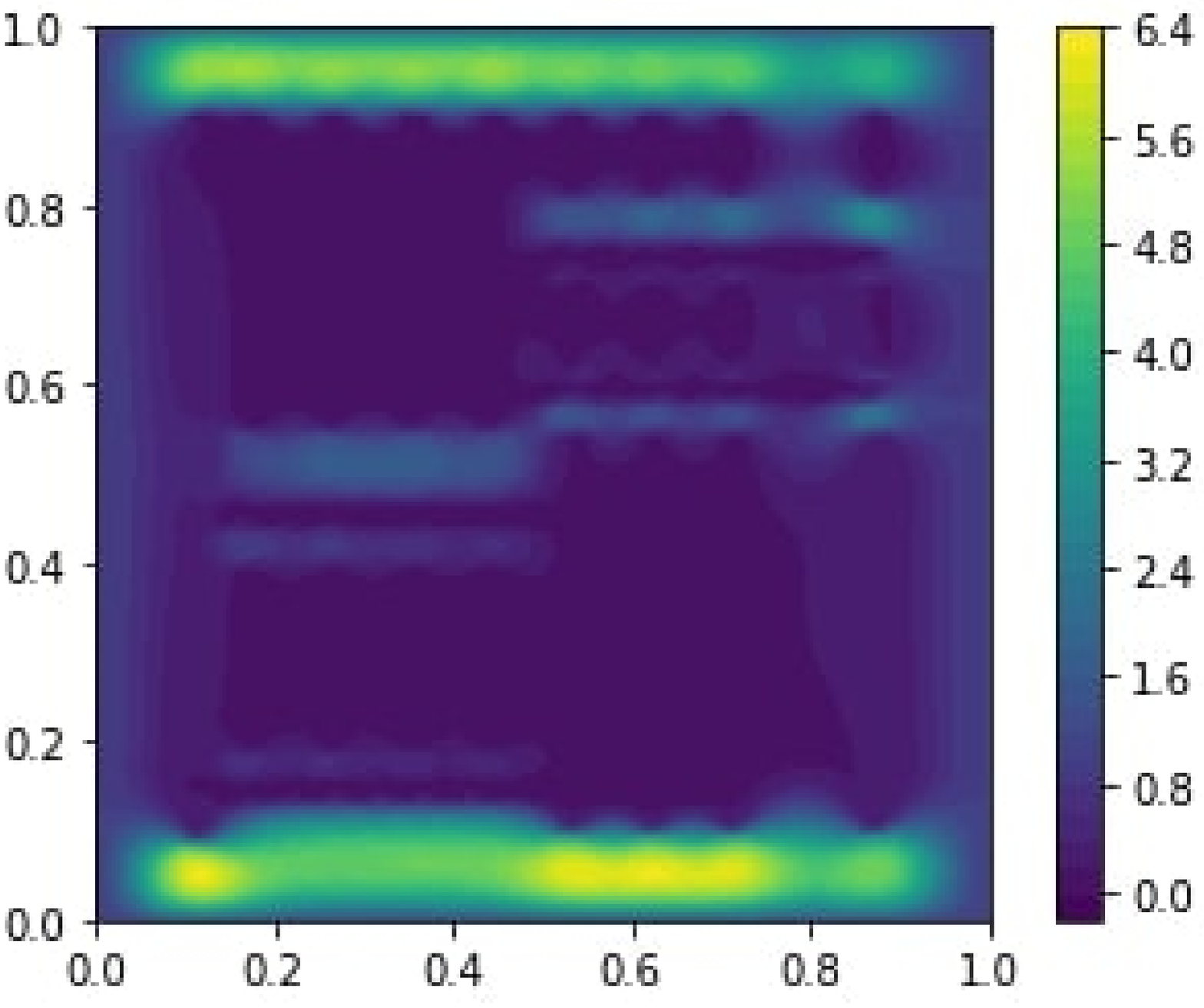}
		}
	\end{center}
	\vspace*{-15pt}
	\caption{Distributions of $u_1$ with $\kappa^{-1}=1$ in purple region in Example \ref{example_3}.} \label{bar_u1}
\end{figure}

\begin{figure}[!ht]
	\begin{center}
		\subfigure[$\kappa^{-1}=10$ in the yellow region]{
			\label{bar1_velocity2}
			\centering
			\includegraphics[width=2.16in]{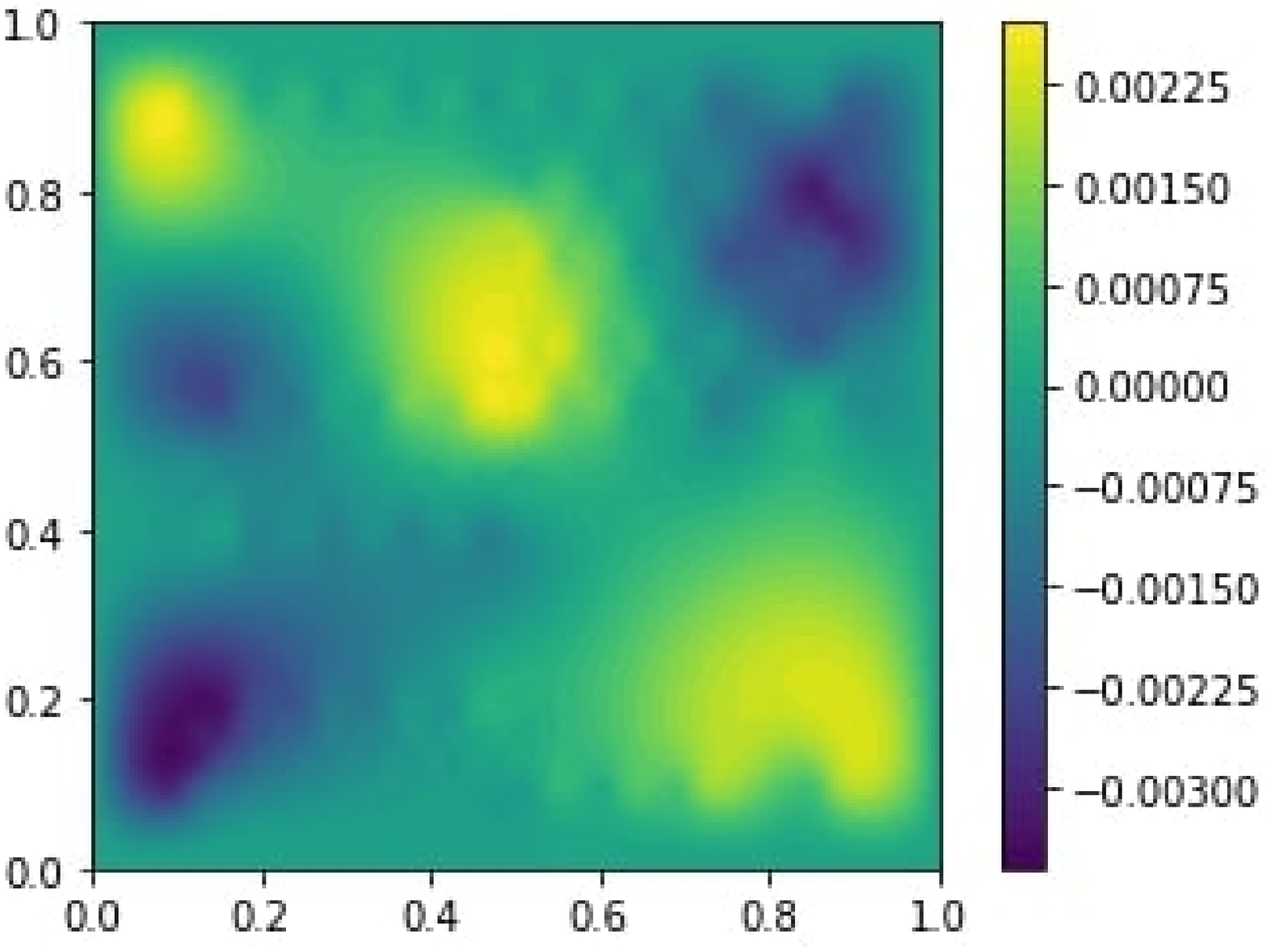}
		}
		\subfigure[$\kappa^{-1}=10^3$ in the yellow region]{
			\label{bar3_velocity2}
			\centering
			\includegraphics[width=2.05in]{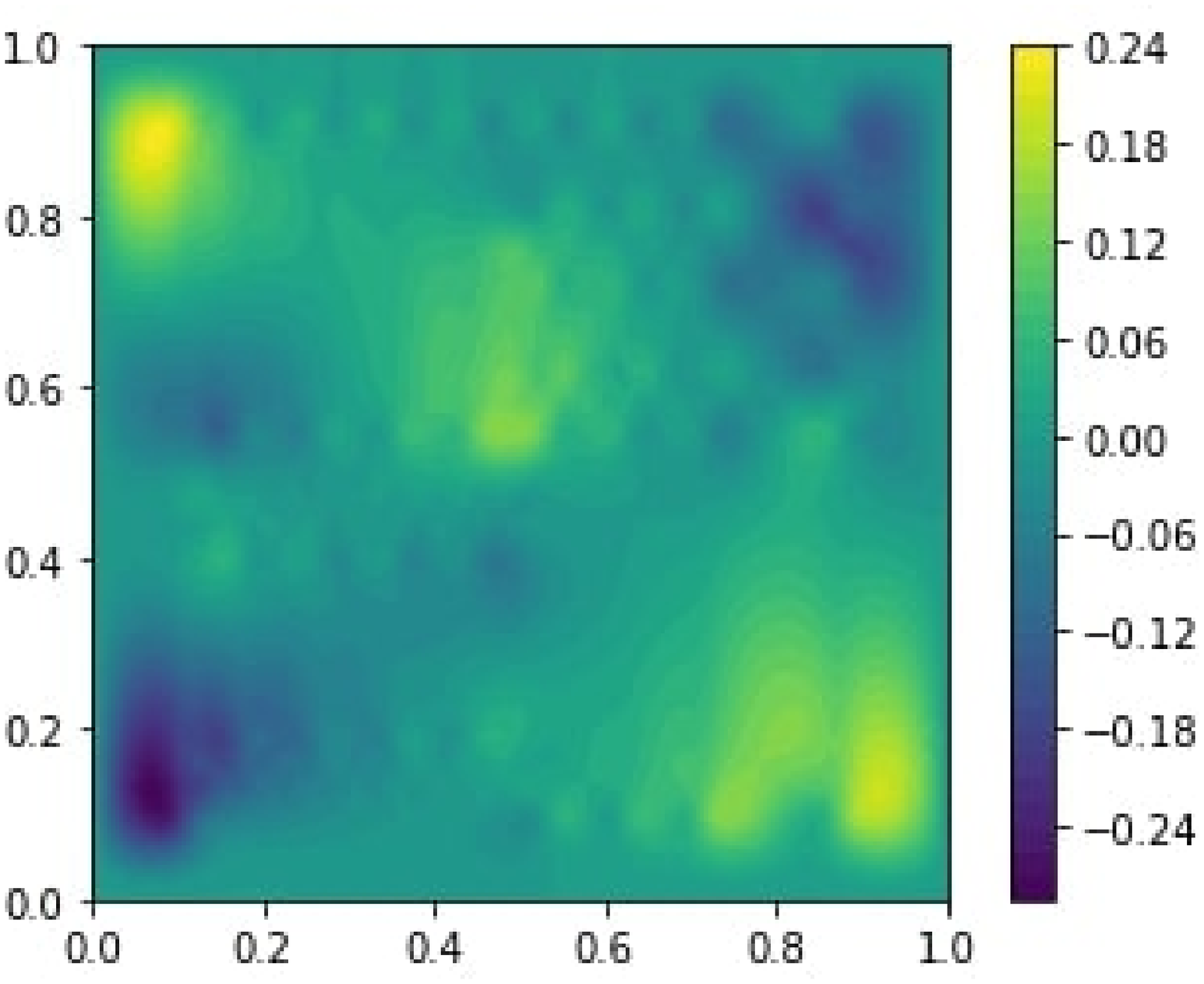}
		}
		\subfigure[$\kappa^{-1}=10^5$ in the yellow region]{
			\label{bar5_velocity2}
			\centering
			\includegraphics[width=1.98in]{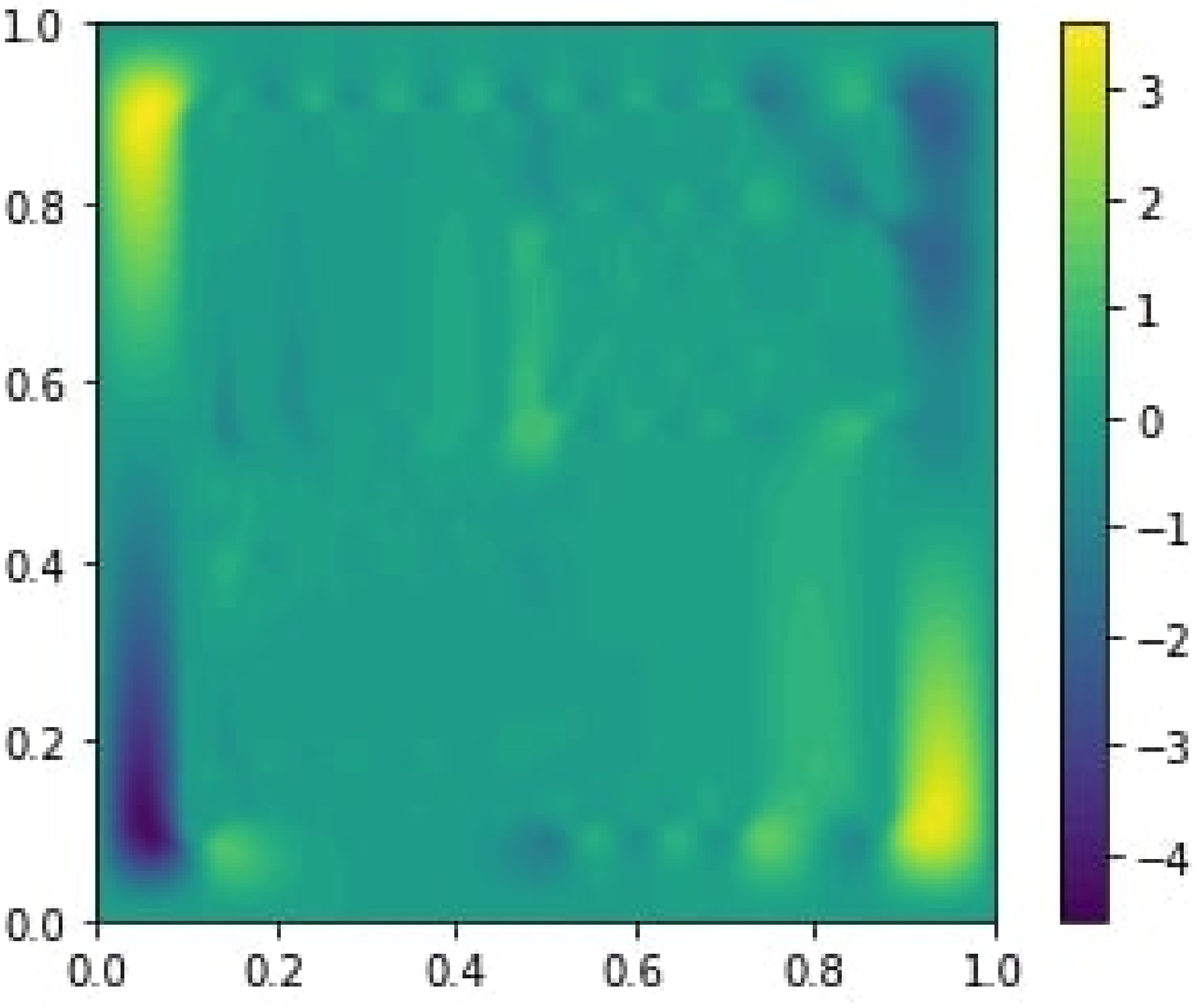}
		}
	\end{center}
	\vspace*{-15pt}
	\caption{Distributions of $u_2$ with $\kappa^{-1}=1$ in purple region in Example \ref{example_3}.} \label{bar_u2}
\end{figure}

\begin{figure}[!ht]
	\begin{center}
		\subfigure[$\kappa^{-1}=10$ in the yellow region]{
			\label{bar1_stress}
			\centering
			\includegraphics[width=2.08in]{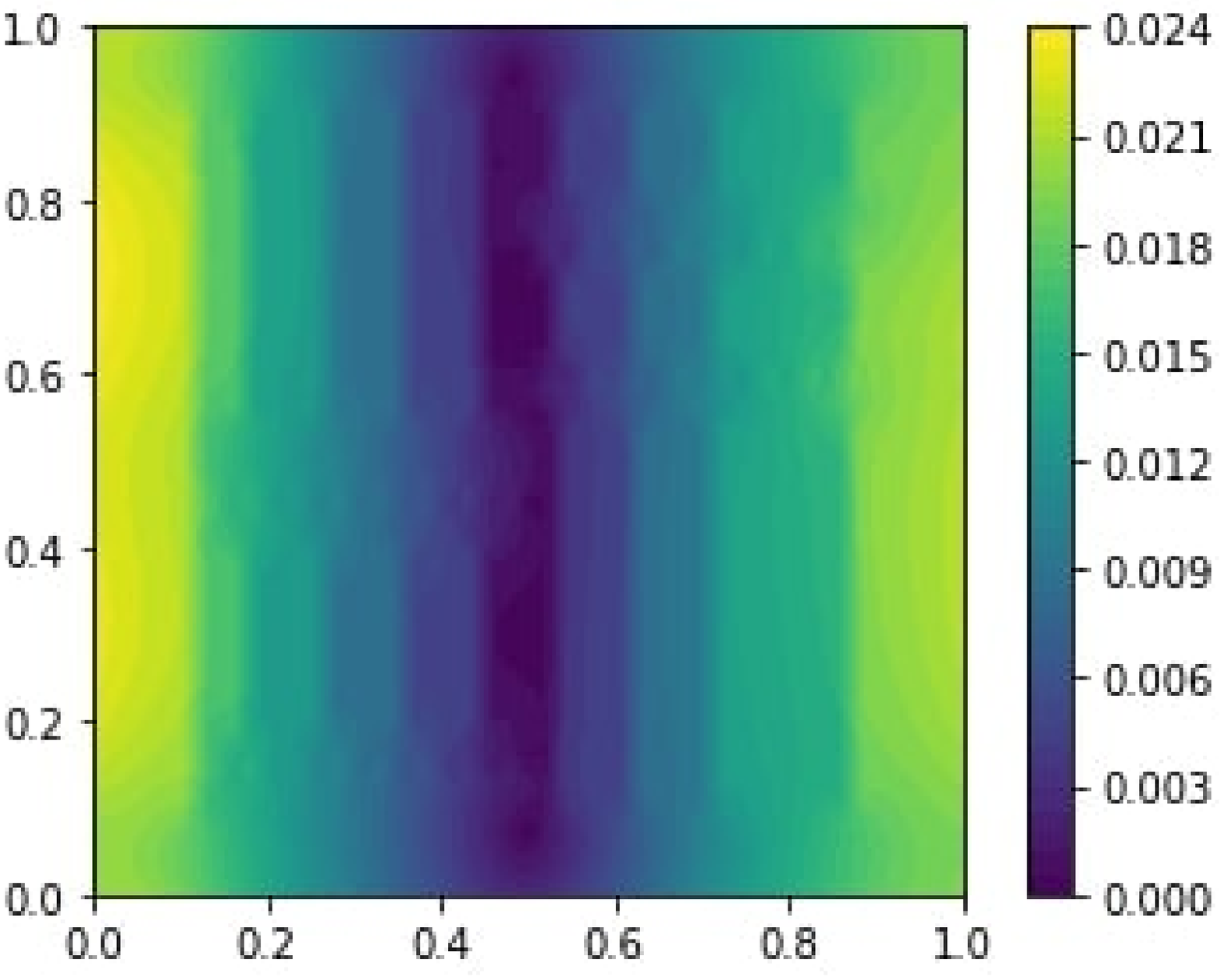}
		}
		\subfigure[$\kappa^{-1}=10^3$ in the yellow region]{
			\label{bar3_stress}
			\centering
			\includegraphics[width=1.99in]{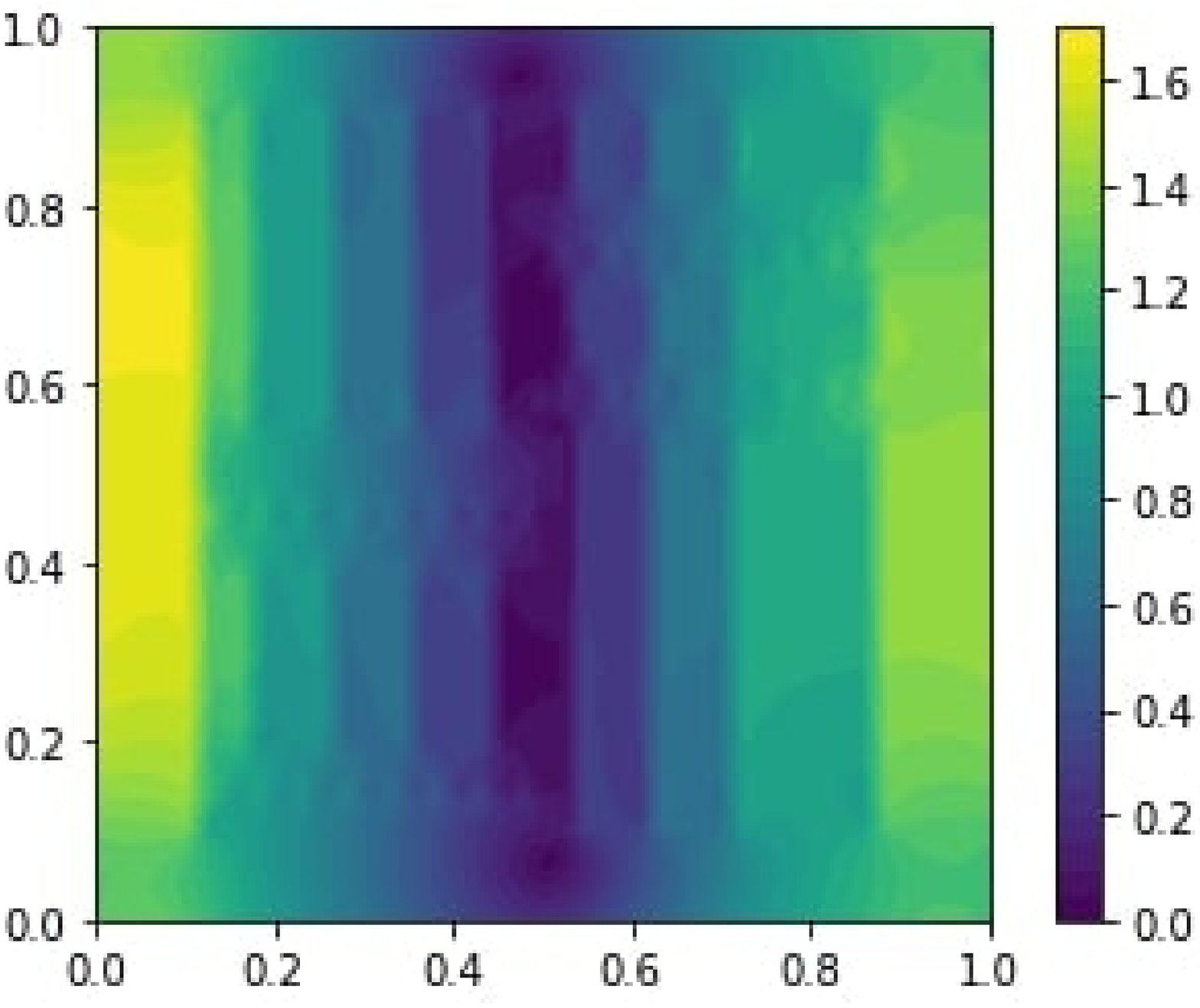}
		}
		\subfigure[$\kappa^{-1}=10^5$ in the yellow region]{
			\label{bar5_stress}
			\centering
			\includegraphics[width=1.97in]{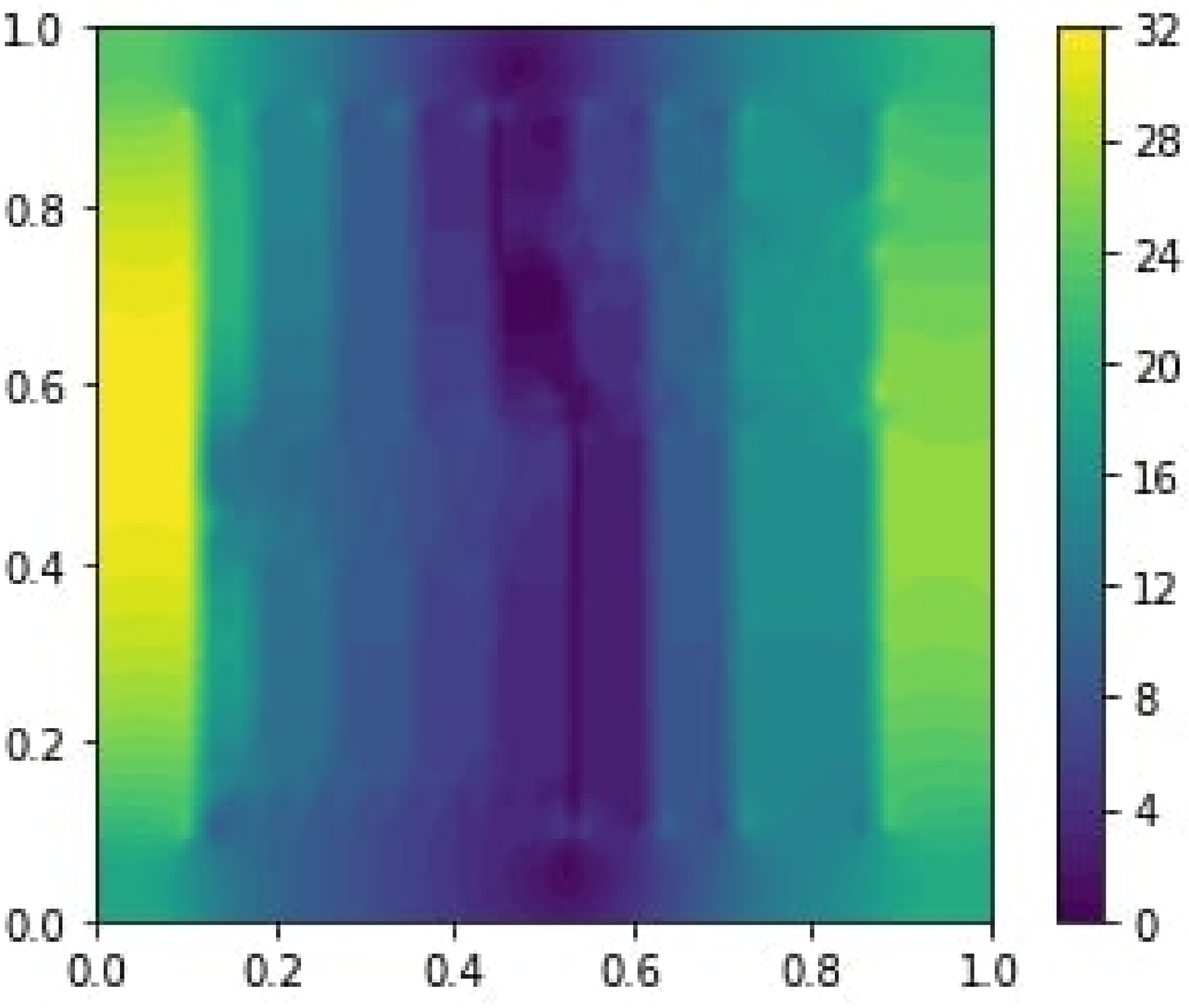}
		}
	\end{center}
	\vspace*{-15pt}
	\caption{Distributions of $\underline{\bm{\sigma}}$ with $\kappa^{-1}=1$ in purple region in Example \ref{example_3}.} \label{bar_s}
\end{figure}

\begin{figure}[!ht]
	\begin{center}
		\subfigure[$\kappa^{-1}=10$ in the yellow region]{
			\label{bar1_pressure}
			\centering
			\includegraphics[width=2.05in]{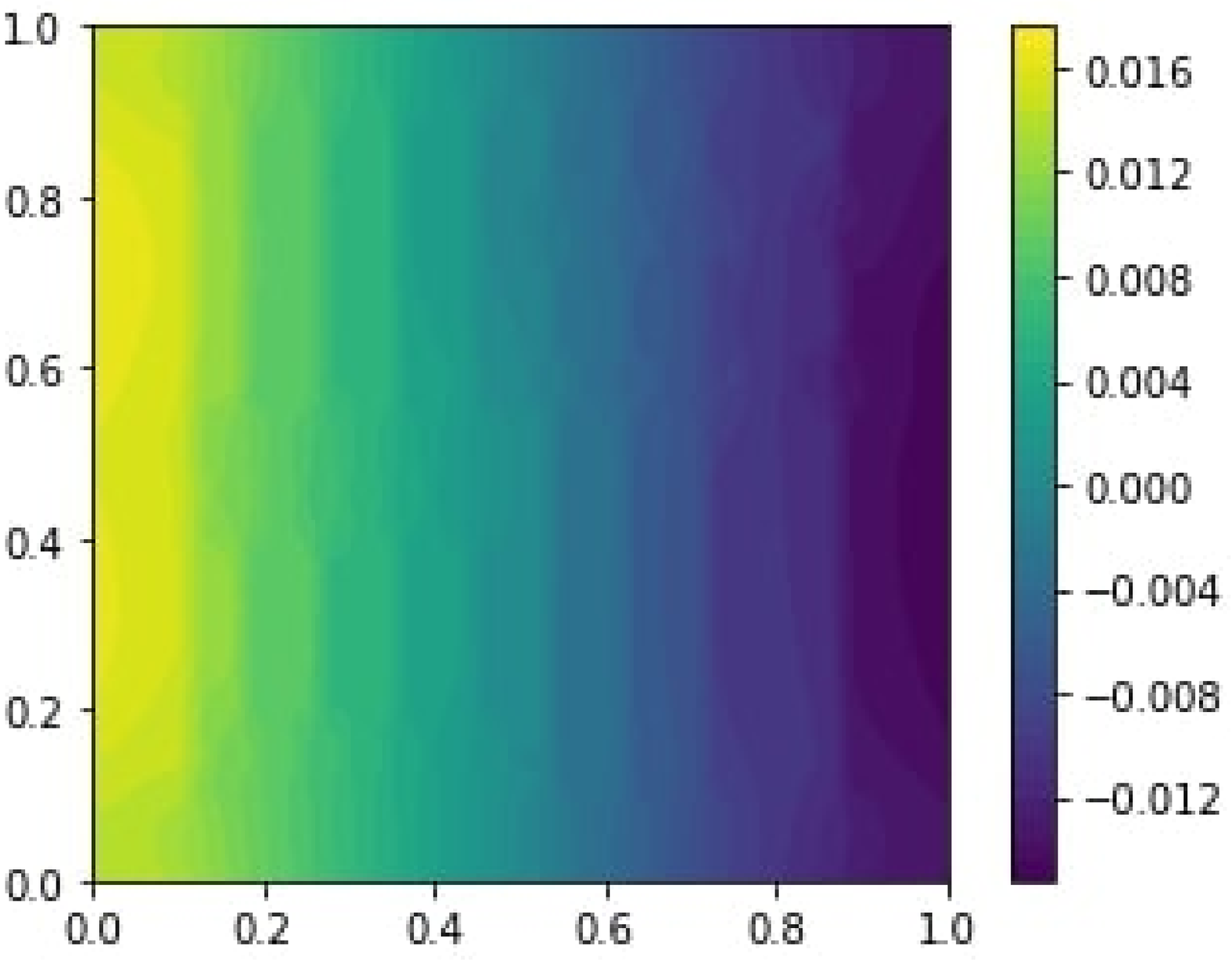}
		}
		\subfigure[$\kappa^{-1}=10^3$ in the yellow region]{
			\label{bar3_pressure}
			\centering
			\includegraphics[width=2.0in]{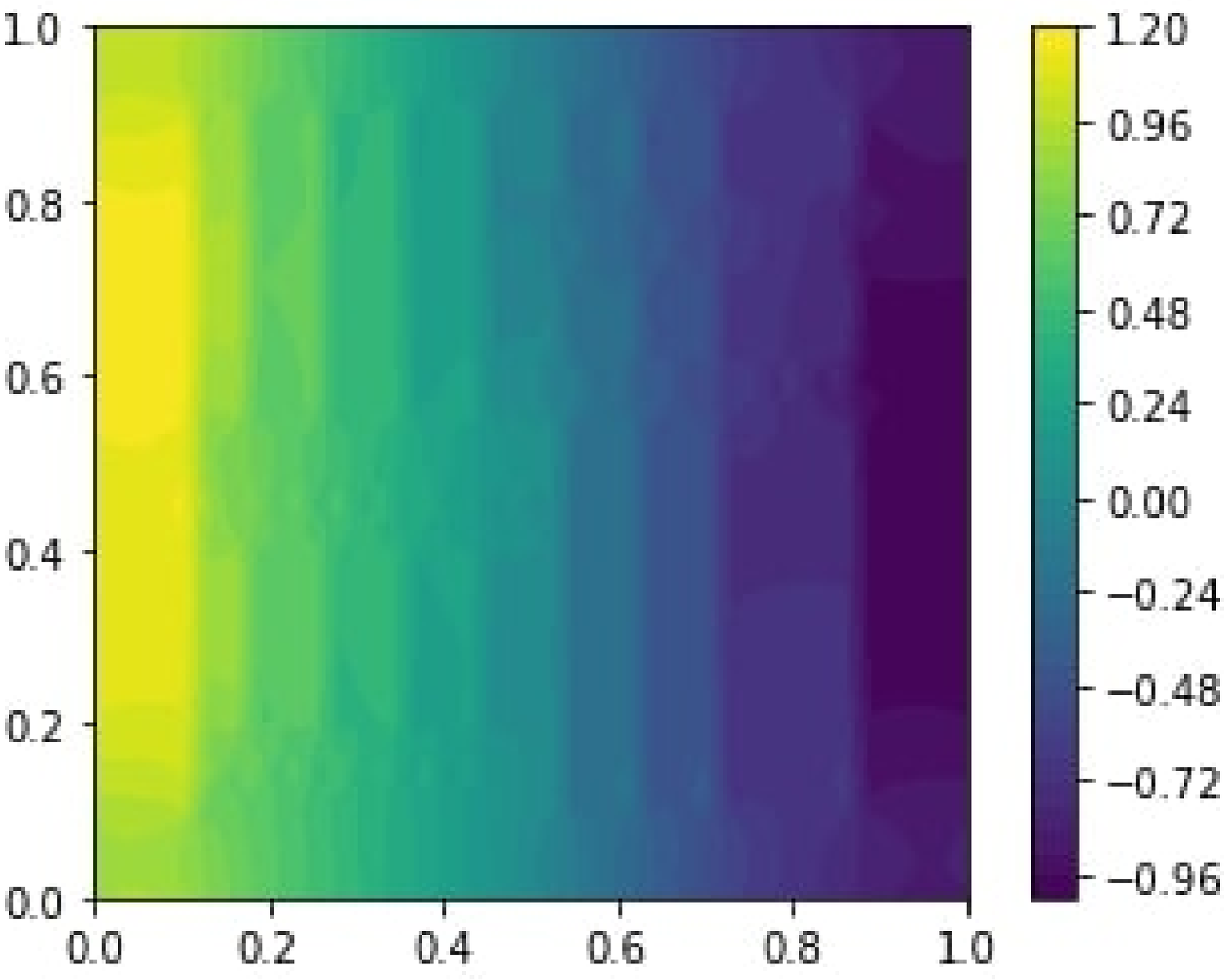}
		}
		\subfigure[$\kappa^{-1}=10^5$ in the yellow region]{
			\label{bar5_pressure}
			\centering
			\includegraphics[width=1.96in]{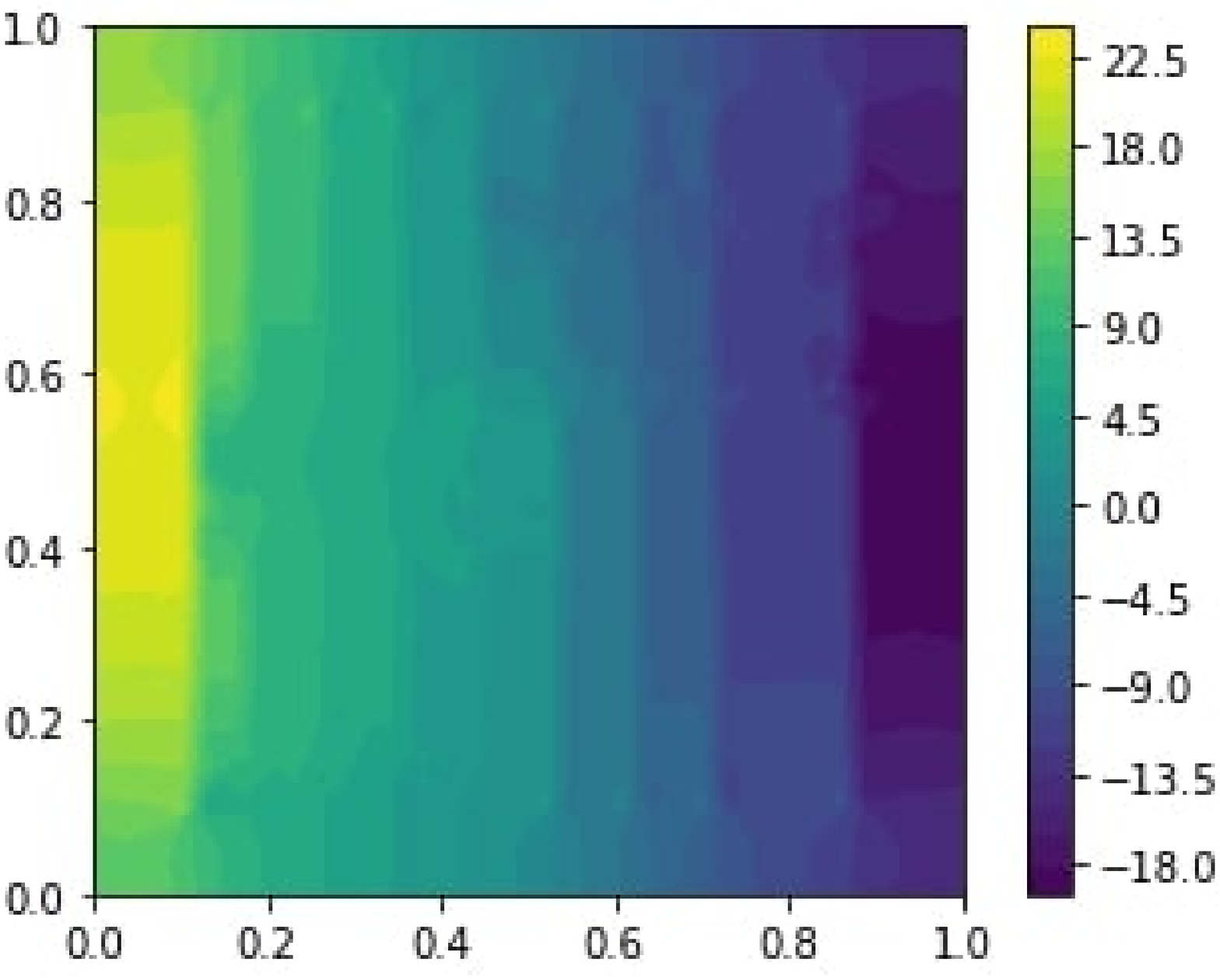}
		}
	\end{center}
	\vspace*{-15pt}
	\caption{Distributions of $p$ with $\kappa^{-1}=1$ in purple region in Example \ref{example_3}.} \label{bar_p}
\end{figure}

\begin{myExam}\label{example_4} In this example, the profile of
$\kappa^{-1}$ is plotted in Figure \ref{vuggy_domain} with $\kappa^{-1}=10,10^3,10^5$ in the
purple region and $\kappa^{-1}=1$ in yellow region. (\cite{Iliev_2011_Variational,Lin2014A}).
\end{myExam}

For $\kappa^{-1}=10,10^3,10^5$ in the
purple region and $\kappa^{-1}=1$ in yellow region of \ref{vuggy_domain}, the first and the second
components of the velocity obtained by MDG method with
$\underline{\bm{\mathcal{P}}}^{\mathbb{S}}_{2}$-$\bm{\mathcal{P}}_{1}$
element are presented in Figure \ref{vuggy_u1} and Figure
\ref{vuggy_u2}, respectively. The stress intensity and pressure
profiles are showed in Figure \ref{vuggy_s} and Figure \ref{vuggy_p}.

From Figure \ref{bar_u1} and Figure \ref{bar_u2}, we can see that the
velocity of fluid in the purple region is faster and in the yellow
region become slower as the contrast permeability increases. While,
from Figure \ref{vuggy_u1} and Figure \ref{vuggy_u2}, we can see that
the velocity of fluid in the yellow region become faster and in the
purple region become slower with the contrast permeability increasing.
Figure \ref{vuggy_s} and Figure \ref{vuggy_p} show that the high
contrast permeability gives rise to the large velocity difference, and
the intensity and pressure increase rapidly.

Both Example \ref{example_3} and Example \ref{example_4} indicate that
the contrast permeability is higher, the change of the velocity,
pressure, and stress is greater. And they show the robustness,
accuracy, and flexibility of the MDG method for the Brinkman problem.

\begin{figure}[!ht]
	\begin{center}
    	\subfigure[$\kappa^{-1}=10$ in the purple region]{
		    \label{vuggy1_velocity1}
		    \centering
		    \includegraphics[width=2.09in]{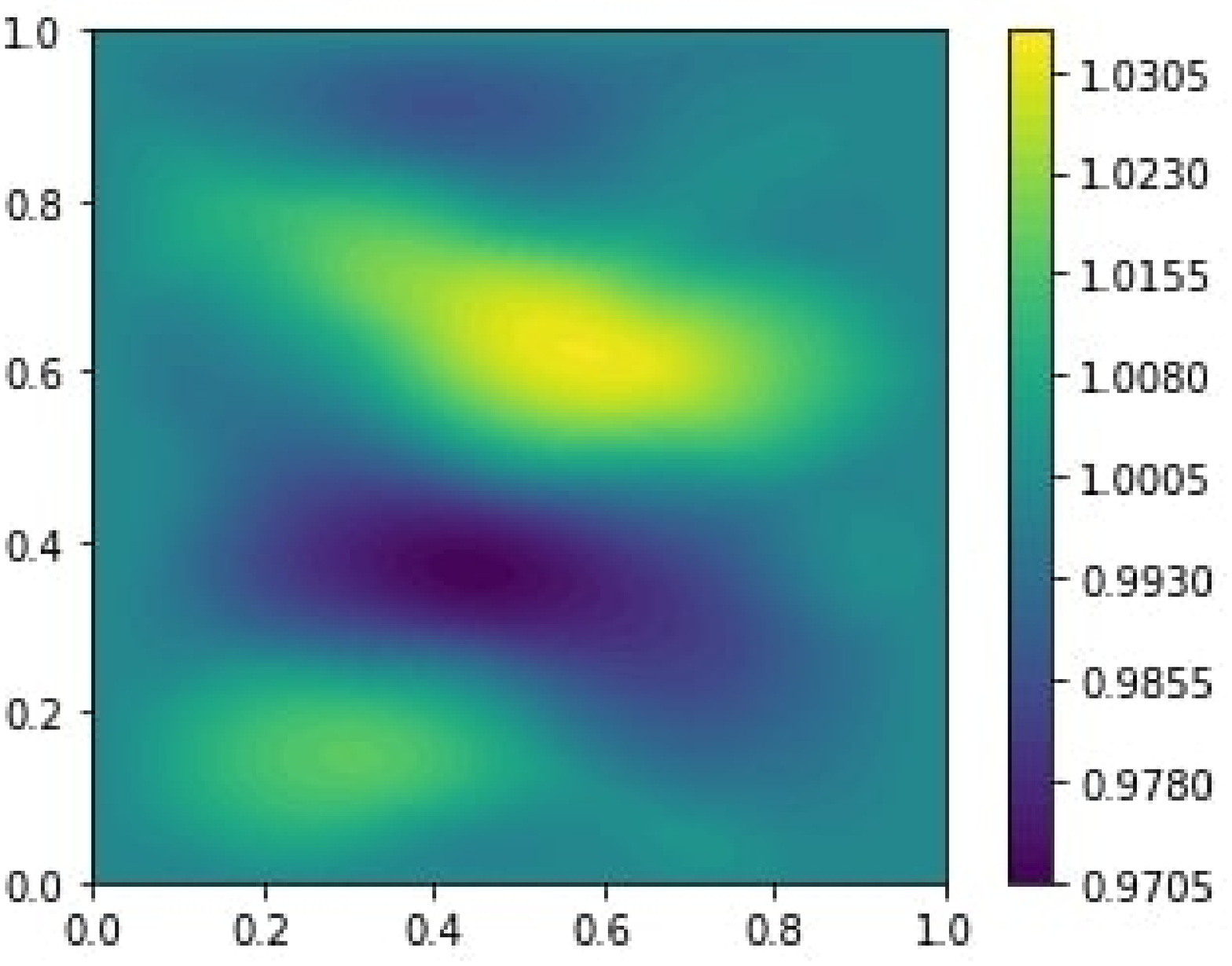}
     	}
    	\subfigure[$\kappa^{-1}=10^3$ in the purple region]{
	    	\label{vuggy3_velocity1}
	    	\centering
	    	\includegraphics[width=1.98in]{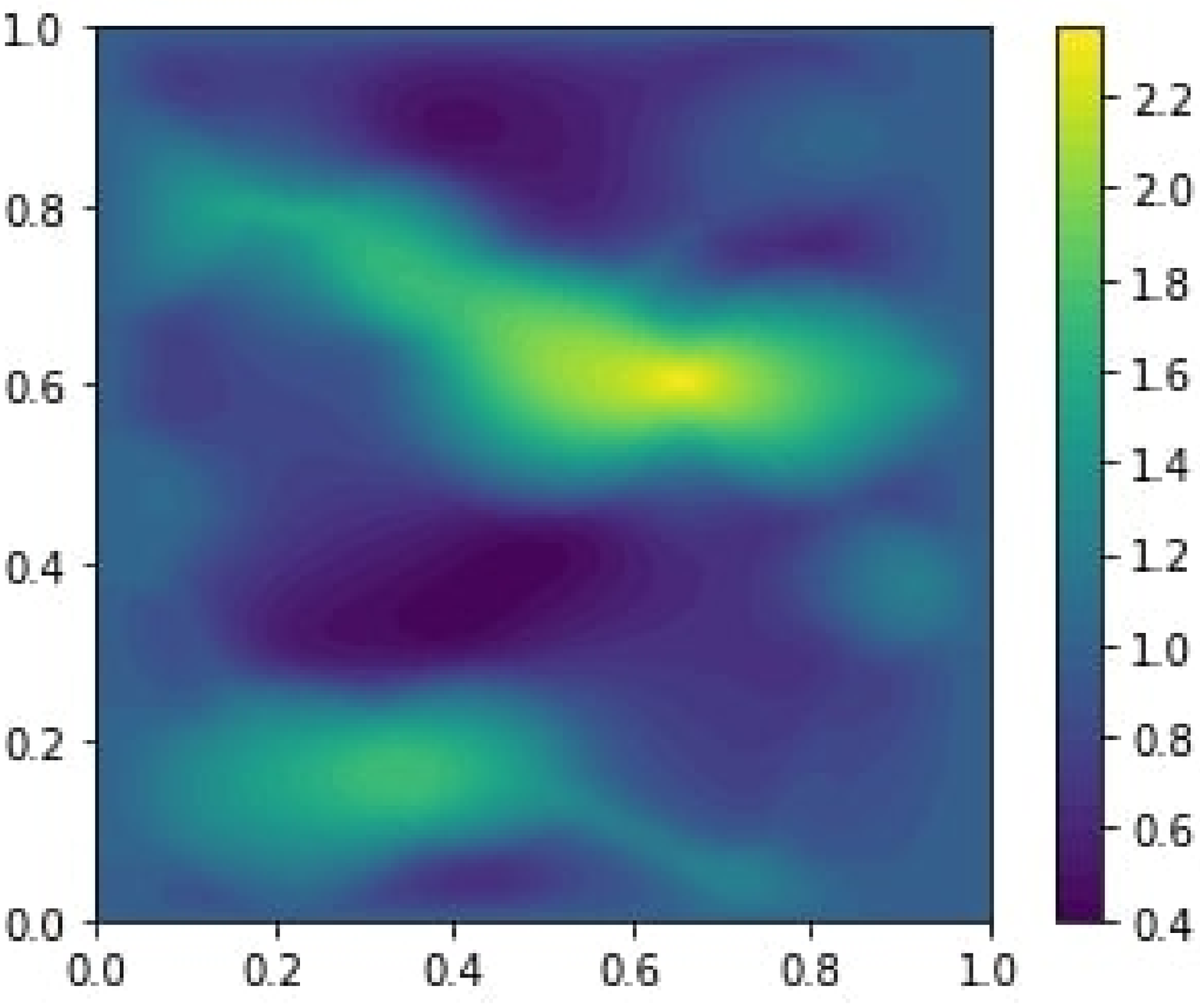}
    	}
		\subfigure[$\kappa^{-1}=10^5$ in the purple region]{
			\label{vuggy6_velocity1}
			\centering
			\includegraphics[width=2.0in]{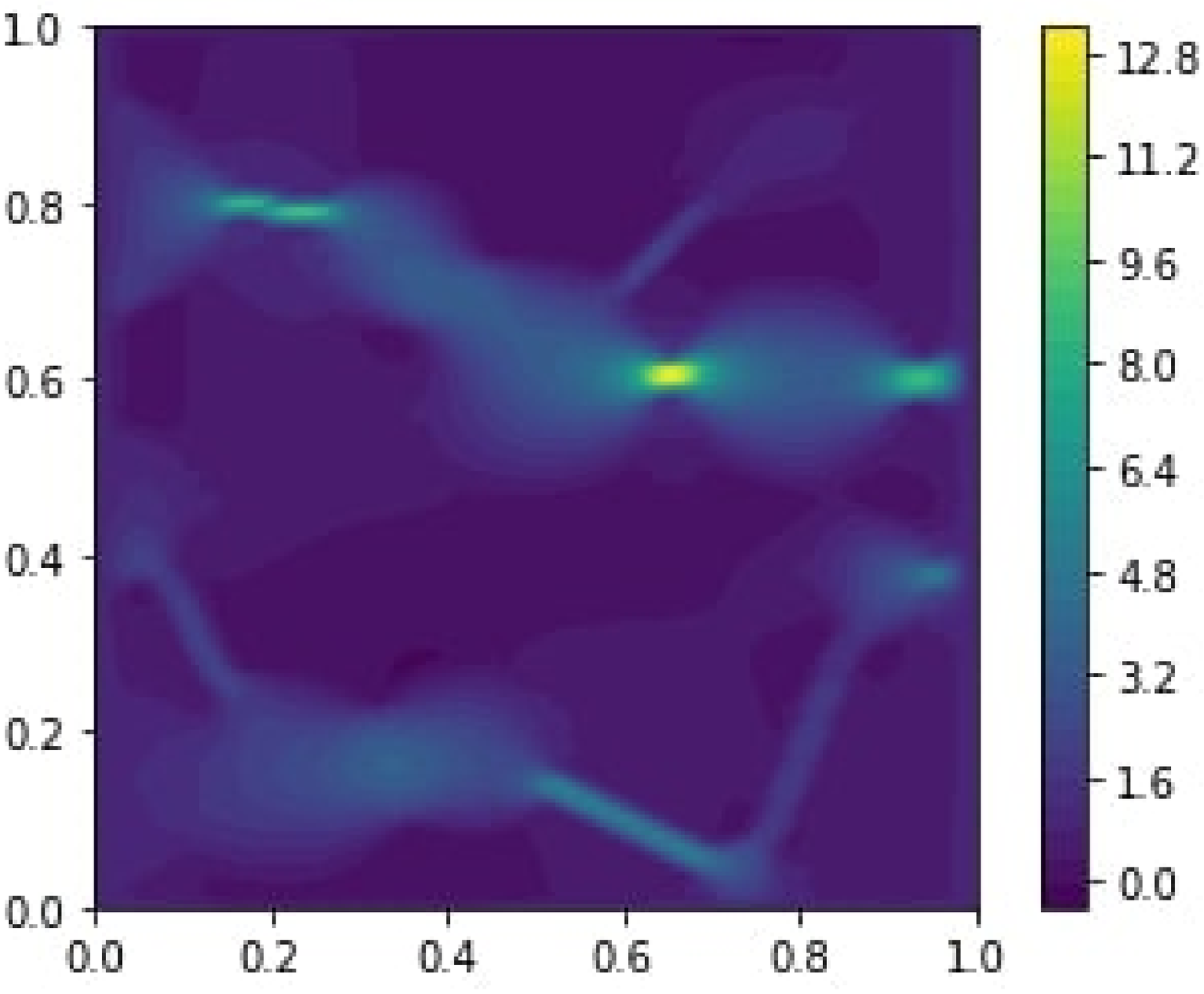}
		}
	\end{center}
	\vspace*{-15pt}
	\caption{Distributions of $u_1$ with $\kappa^{-1}=1$ in yellow region in Example \ref{example_4}.} \label{vuggy_u1}
\end{figure}

\begin{figure}[!ht]
	\begin{center}
    	\subfigure[$\kappa^{-1}=10$ in the purple region]{
		    \label{vuggy1_velocity2}
		    \centering
		    \includegraphics[width=2.11in]{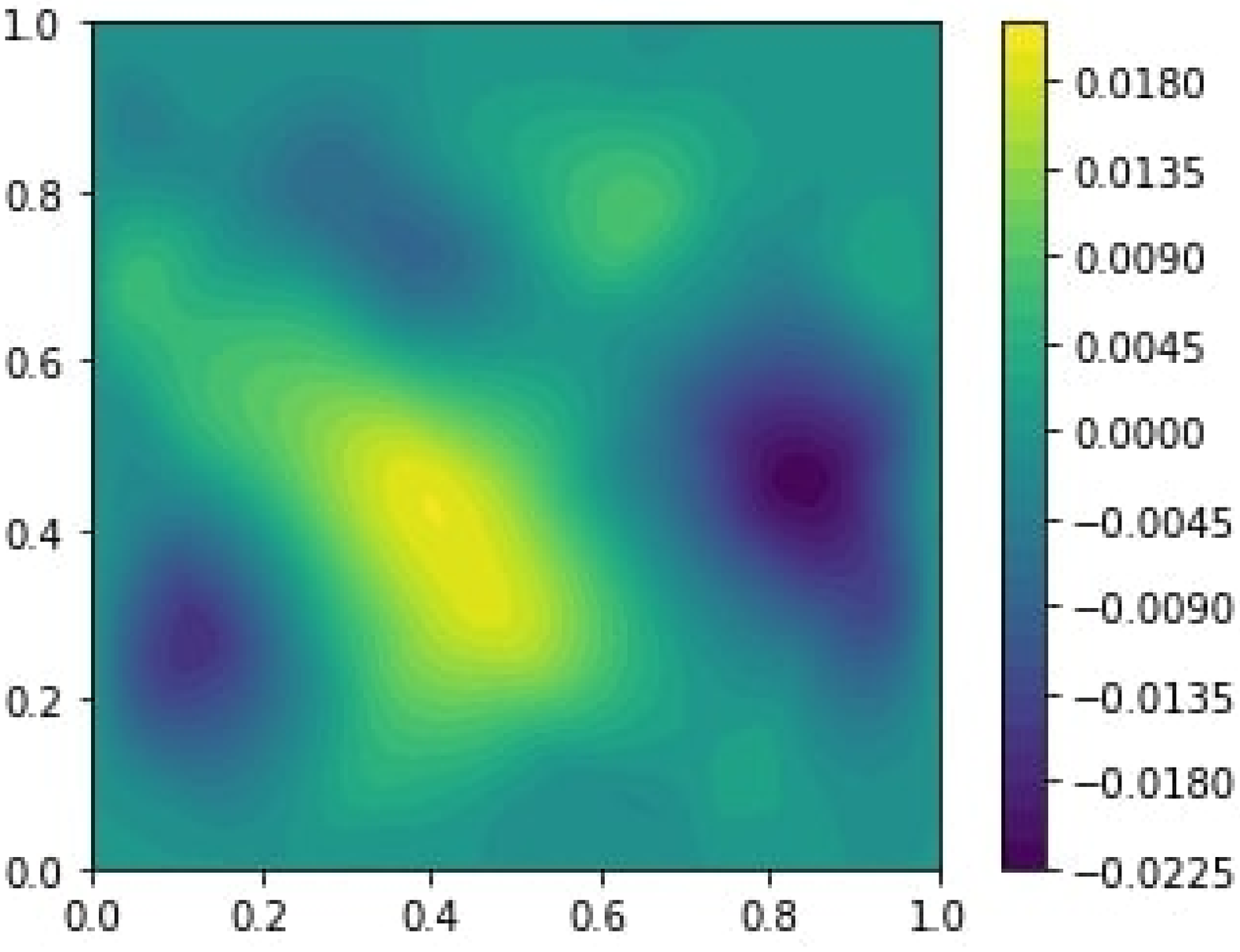}
     	}
    	\subfigure[$\kappa^{-1}=10^3$ in the purple region]{
	    	\label{vuggy3_velocity2}
	    	\centering
	    	\includegraphics[width=2.0in]{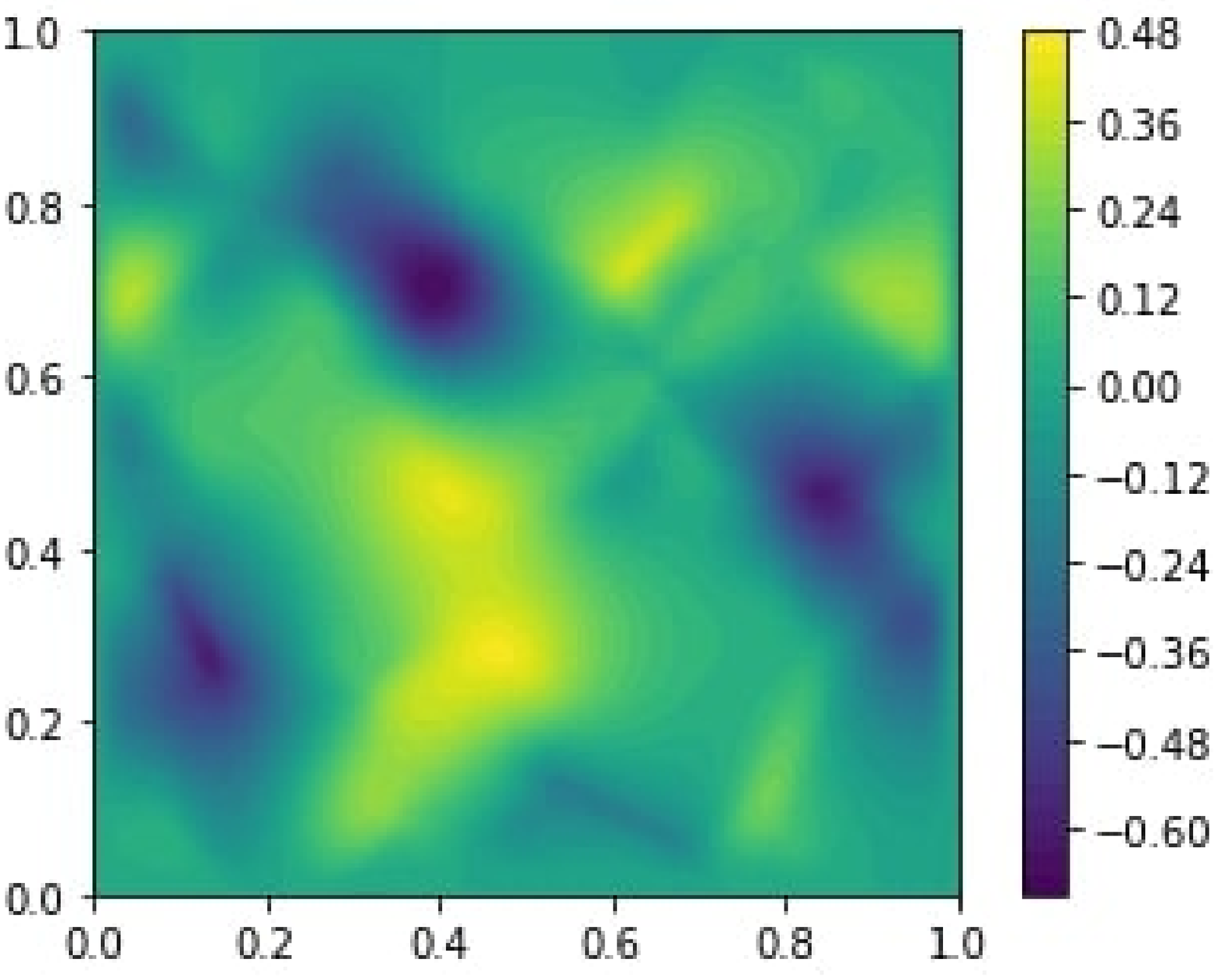}
    	}
		\subfigure[$\kappa^{-1}=10^5$ in the purple region]{
			\label{vuggy6_velocity2}
			\centering
			\includegraphics[width=2.01in]{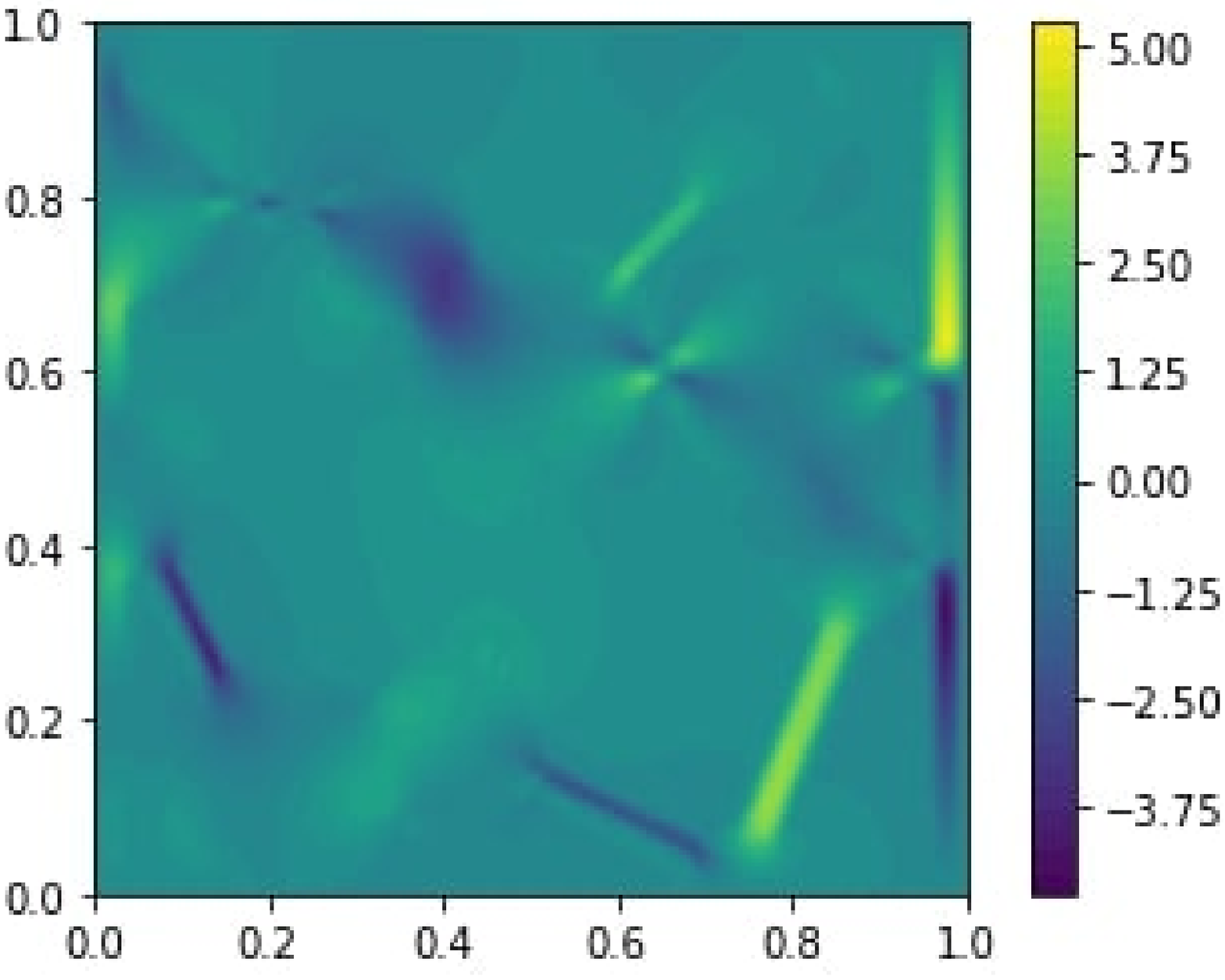}
		}
	\end{center}
	\vspace*{-15pt}
	\caption{Distributions of $u_2$ with $\kappa^{-1}=1$ in yellow region in Example \ref{example_4}.} \label{vuggy_u2}
\end{figure}

\begin{figure}[!ht]
	\begin{center}
    	\subfigure[$\kappa^{-1}=10$ in the purple region]{
		    \label{vuggy1_stress}
		    \centering
		    \includegraphics[width=2.06in]{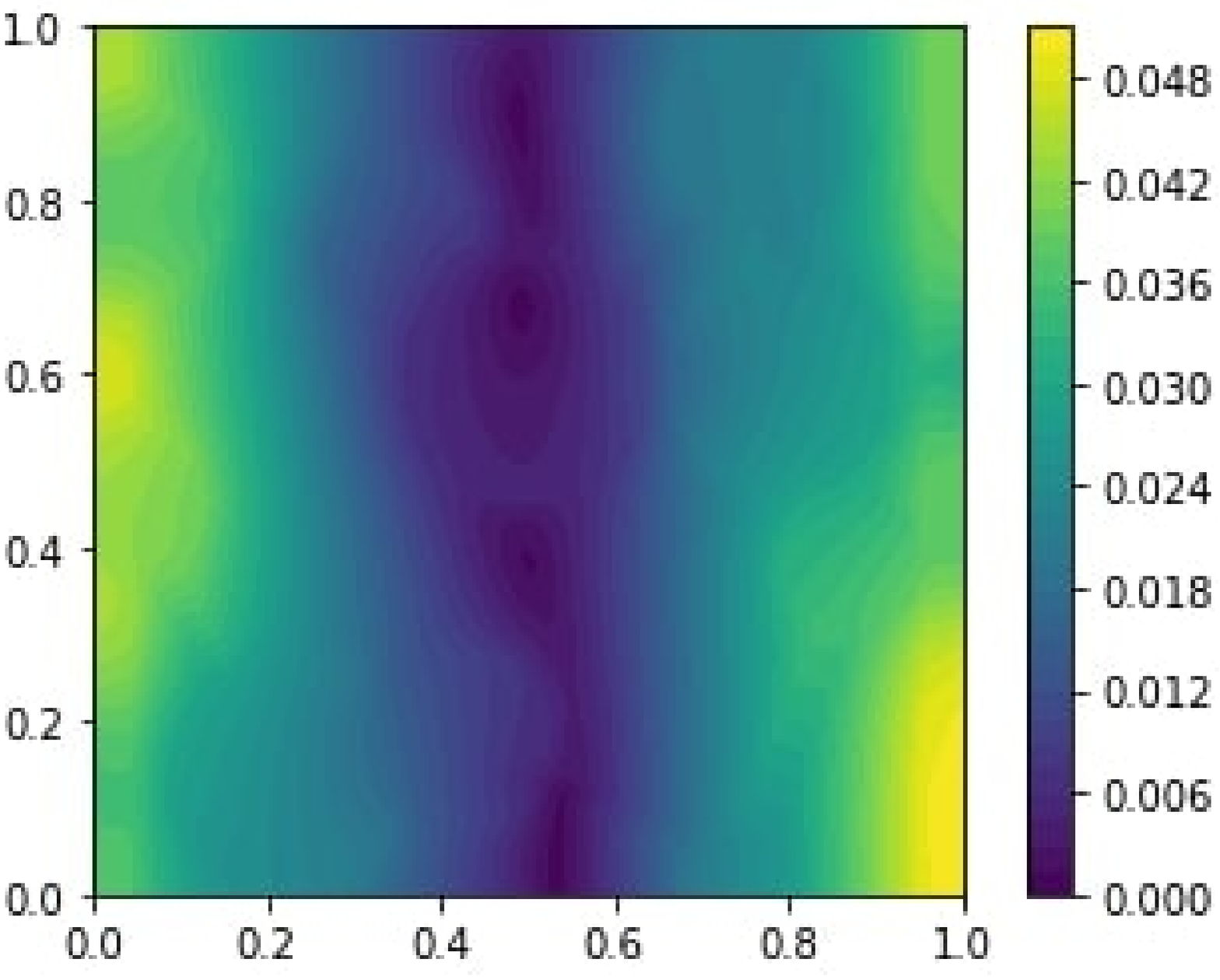}
     	}
    	\subfigure[$\kappa^{-1}=10^3$ in the purple region]{
	    	\label{vuggy3_stress}
	    	\centering
	    	\includegraphics[width=1.98in]{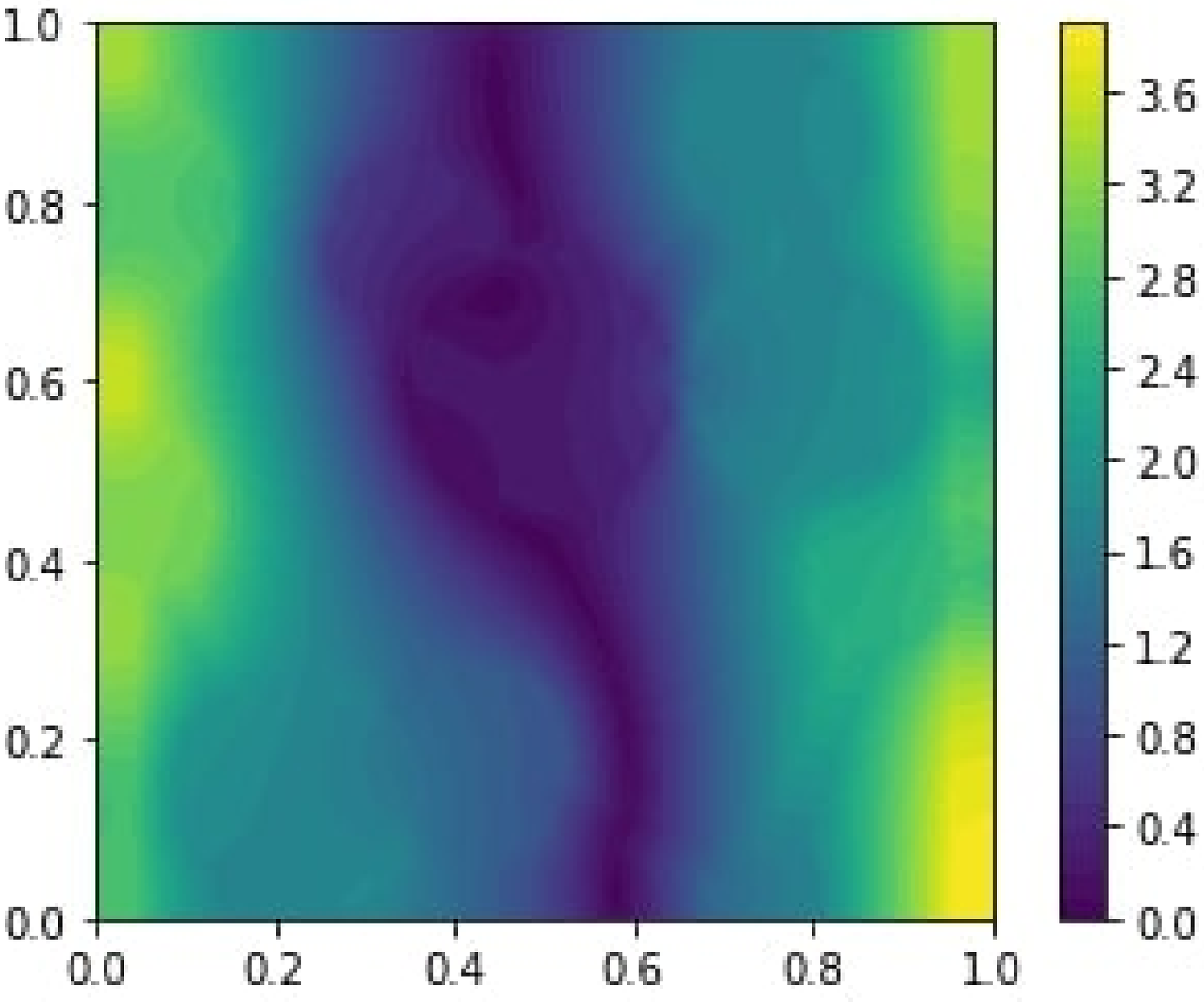}
    	}
		\subfigure[$\kappa^{-1}=10^5$ in the purple region]{
			\label{vuggy6_stress}
			\centering
			\includegraphics[width=2.00in]{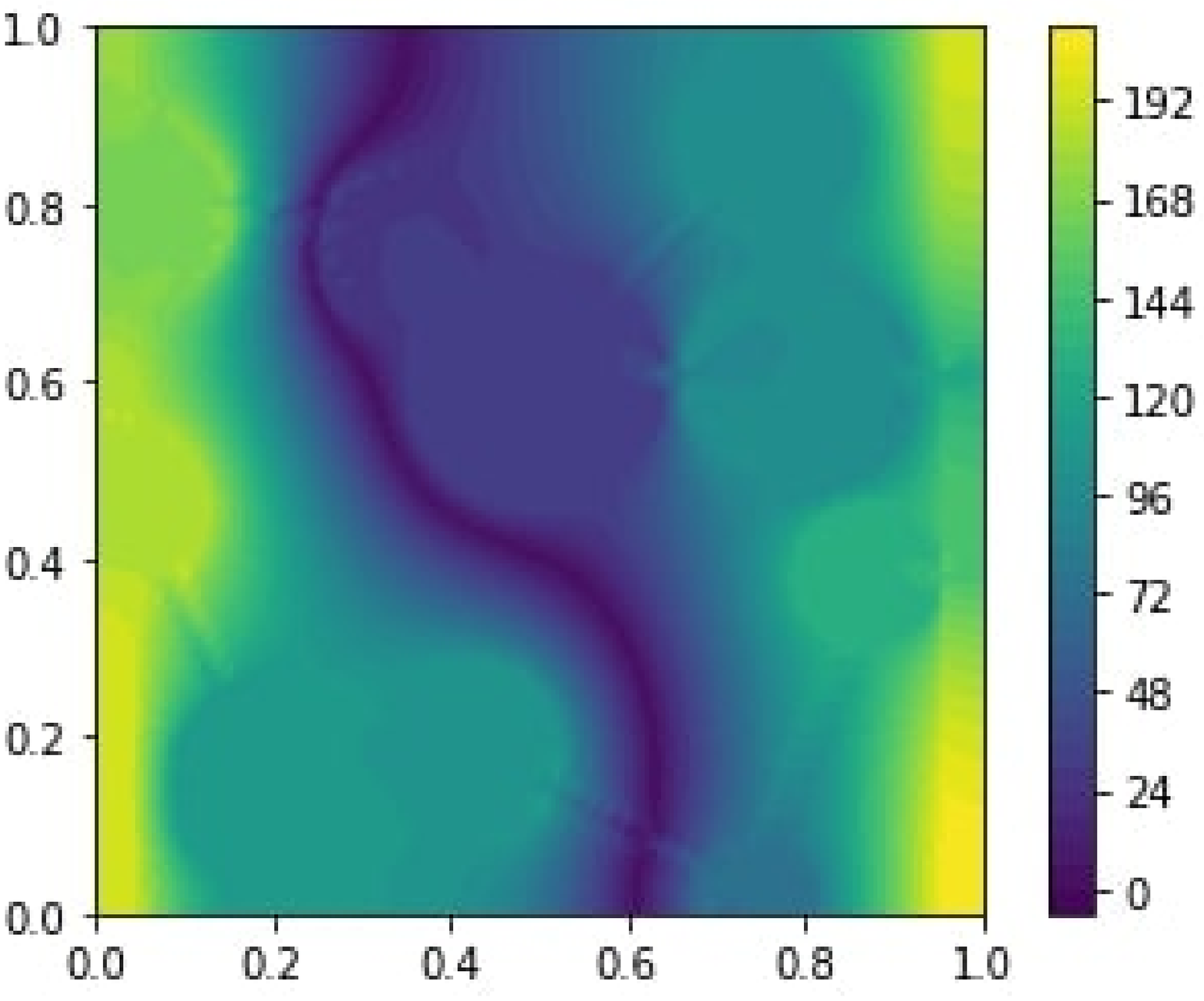}
		}
	\end{center}
	\vspace*{-15pt}
	\caption{Distributions of $\underline{\bm{\sigma}}$ with $\kappa^{-1}=1$ in yellow region in Example \ref{example_4}.} \label{vuggy_s}
\end{figure}

\begin{figure}[!ht]
	\begin{center}
    	\subfigure[$\kappa^{-1}=10$ in the purple region]{
		    \label{vuggy1_pressure}
		    \centering
		    \includegraphics[width=2.10in]{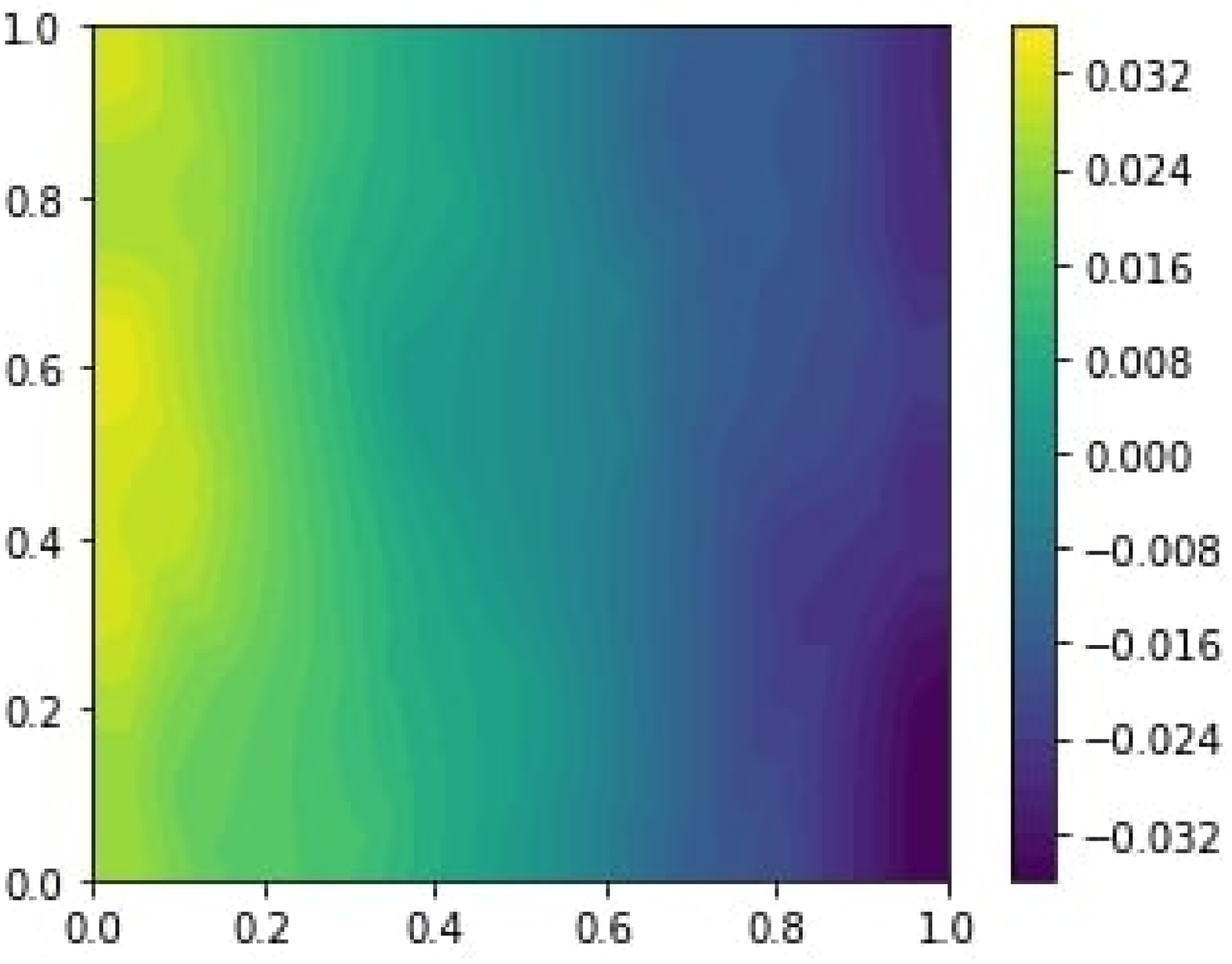}
     	}
    	\subfigure[$\kappa^{-1}=10^3$ in the purple region]{
	    	\label{vuggy3_pressure}
	    	\centering
	    	\includegraphics[width=2.0in]{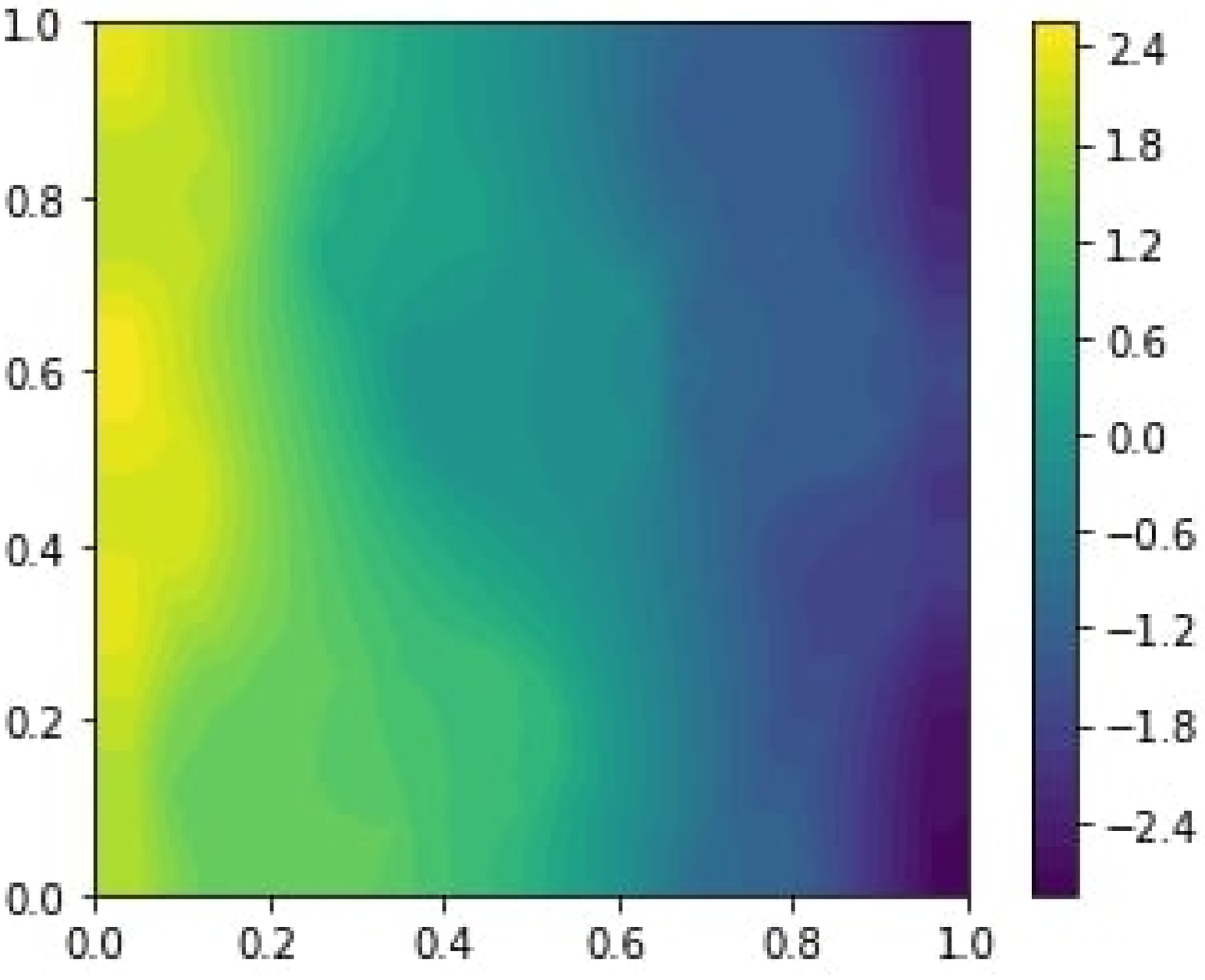}
    	}
		\subfigure[$\kappa^{-1}=10^5$ in the purple region]{
			\label{vuggy6_pressure}
			\centering
			\includegraphics[width=2.02in]{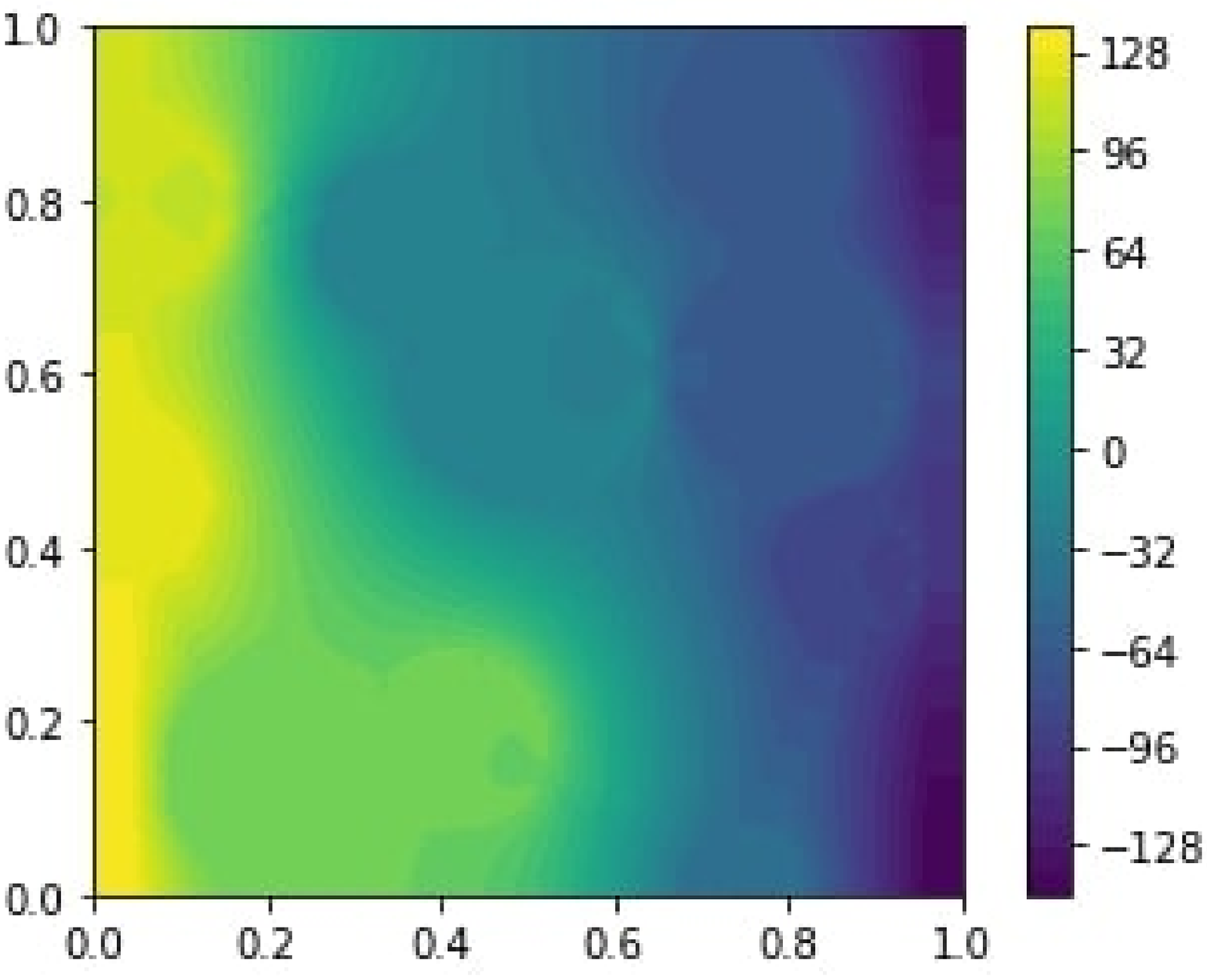}
		}
	\end{center}
	\vspace*{-15pt}
	\caption{Distributions of $p$ with $\kappa^{-1}=1$ in yellow region in Example \ref{example_4}.} \label{vuggy_p}
\end{figure}


\section{Summary}\label{summary}

In this paper, the mixed discontinuous Galerkin method with
$\underline{\bm{\mathcal{P}}}^{\mathbb{S}}_{k+1}$-$\bm{\mathcal{P}}_{k}$
element pair is constructed and studied for solving the Brinkman
equations based on the pseudostress-velocity formulation. The
well-posedness of the MDG scheme is proved by the generalized Brezzi theory,
and a priori error analysis is established. For any $k \geq 0$, we
prove the optimal convergence order for the stress in broken
$\underline{\bm{H}}(\mathbf{div})$ norm and velocity in $\bm{L}^2$
norm. Furthermore, the $\underline{\bm{L}}^2$ error estimate for the
pseudostress is also investigated under certain conditions.  Numerical 
examples confirm the theoretical results. In summary,
the proposed MDG method for Brinkman equation has following main
advantages: (i) it is uniformly stable and efficient from the Darcy
limit to the Stokes limit; (ii) it provides accurate approximation to
both the symmetric stress and the velocity; (iii) it is locally
conservative for the physical quantities.

\bigskip

\noindent{\bf Acknowledgments.}  We thank Professor Suchuan Dong (Purdue 
University) for the helpful discussions.

\bibliographystyle{elsarticle-harv}
\bibliography{yx_ref}

\end{document}